\documentclass[draft]{amsproc}


\theoremstyle{plain} 

\newtheorem{theorem}{Theorem}[section]
\newtheorem{lemma}[theorem]{Lemma}
\newtheorem{corollary}[theorem]{Corollary}
\newtheorem{conjecture}[theorem]{Conjecture}

\newtheorem{proposition}[theorem]{Proposition}
\newtheorem{question}[theorem]{Question}



\newcommand{\eop}{\ \hfill $\Box$}

\numberwithin{equation}{section}



\newcommand{\cc}{{\mathbb C}}
\newcommand{\pp}{{\mathbb P}}
\newcommand{\rr}{{\mathbb R}}
\newcommand{\qq}{{\mathbb Q}}
\newcommand{\zz}{{\mathbb Z}}

\newcommand{\ggg}{{\mathbb G}}
\newcommand{\aaa}{{\mathbb A}}
\newcommand{\Gm}{\ggg _m}

\newcommand{\Ee}{{\mathcal E}}
\newcommand{\Pp}{{\mathcal P}}
\newcommand{\Cc}{{\mathcal C}}

\newcommand{\Gg}{{\mathcal G}}
\newcommand{\Dd}{{\mathcal D}}
\newcommand{\Ff}{{\mathcal F}}
\newcommand{\Oo}{{\mathcal O}}

\newcommand{\delbar}{\overline{\partial}}

\begin{document}

\author[C. Simpson]{Carlos Simpson}
\address{CNRS, Laboratoire J. A. Dieudonn\'e, UMR 6621
\\ Universit\'e de Nice-Sophia Antipolis\\
06108 Nice, Cedex 2, France}
\email{carlos@math.unice.fr}
\urladdr{http://math.unice.fr/$\sim$carlos/} 

\title[Weight two phenomenon]{A weight two phenomenon for the moduli of rank one
local systems on open varieties}

\subjclass{Primary 14D21, 32J25; Secondary 14C30, 14F35}

\keywords{Connection, Fundamental group, Higgs bundle, Parabolic structure, Quasiprojective variety, Representation, Twistor space}

\begin{abstract}
The twistor space of representations on an open variety maps to a weight two space of local monodromy
transformations around a divisor component at infinty. The space of $\sigma$-invariant sections
of this slope-two bundle over the twistor line is a real $3$ dimensional space whose parameters correspond to the complex residue of the
Higgs field, and the real parabolic weight of a harmonic bundle. 
\end{abstract}

\maketitle


\section{Introduction} \label{sec-introduction}

Let $X$ be a smooth projective variety and $D\subset X$ a reduced effective divisor with simple normal crossings. 
We would like to define a Deligne glueing for the Hitchin twistor space 
of the moduli of local systems over $X-D$. Making the construction presents
new difficulties which are not present in the case of compact base, so we only treat the case of local systems
of rank $1$. Every local system comes from
a vector bundle on $X$ with connection logarithmic along $D$, however one can make local meromorphic
gauge transformations near components of $D$, and this changes the structure of the bundle as well as the
eigenvalues of the residue of the connection. The change in eigenvalues is by subtracting an integer. 
There is no reasonable algebraic quotient by such an operation: for our main example \S \ref{sec-p1}, that
would amount to taking the quotient of $\aaa ^1$ by the translation action of $\zz$. Hence, we are tempted
to look at the moduli space of logarithmic connections and accept the fact that the Riemann-Hilbert
correspondence from there to the moduli space of local systems, is many-to-one. 

We first concentrate on looking at the simplest case, which is when 
$X:= \pp ^1$ and $D:= \{ 0 , \infty \} $ and the local systems have rank $1$. In this case,
much as in Goldman and Xia \cite{GoldmanXia}, one can explicitly write down everything, in particular
we can write down a model. This will allow observation of the weight two phenomenon which is new in the 
noncompact case. 

The residue of a connection takes values in a space which represents the
local monodromy around a puncture. As might be expected, this space has weight two, so when we do the Deligne glueing
we get a bundle of the form $\Oo _{\pp ^1}(2)$. There is an antipodal involution $\sigma$ on this bundle, and the preferred sections
corresponding to harmonic bundles are $\sigma$-invariant.  The
space of $\sigma$-invariant sections of $\Oo _{\pp ^1}(2)$ is $\rr ^3$, in particular it doesn't map isomorphically to a fiber
over one point of $\pp ^1$. Then kernel of the map to the fiber is the parabolic weight parameter. 
Remarkably, the parabolic structure appears ``out of nowhere'', as a result of the holomorphic structure
of the Deligne-Hitchin twistor space constructed only using the notion of logarithmic $\lambda$-connections. 

After \S \ref{sec-p1} treating in detail the case of $\pp ^1 - \{ 0 , \infty \} $, 
we look in \S \ref{sec-tatetwistor} more closely at the bundle $\Oo _{\pp ^1}(2)$ which occurs: it is the {\em Tate twistor structure},
and is also seen as a twist of the tangent bundle  $T\pp ^1$. 
Then \S \ref{sec-general} concerns the case of rank one local systems when $X$ has arbitrary dimension.  In 
\S \ref{sec-strict} we state a conjecture about strictness which should follow from a full mixed theory as we are suggesting here. 

Since we are considering rank one local systems,
the tangent space is Deligne's mixed Hodge structure on $H^1(X -D, \cc )$ (see Theorem \ref{mhs-ident}). 
However, a number of authors, such as Pridham \cite{PridhamMHS} \cite{PridhamQl} and
Brylinski-Foth \cite{BrylinskiFoth} \cite{Foth} have already constructed and studied a mixed Hodge structure on the deformation space of
representations of rank $r>1$ over an open variety. These structures should amount to the local version of what we
are looking for in the higher rank case, and motivate the present paper. They might also allow a direct proof of the
infinitesimal version of the strictness conjecture \ref{conj-strictness}. 

In the higher rank case, there are a number of problems blocking a direct generalization of what we do here. 
These are mostly related to non-regular monodromy operators. In a certain sense, the local structure of a connection with
diagonalizable monodromy operators, is like the direct sum of rank $1$ pieces. However, the action of the gauge group contracts to a trivial
action at $\lambda = 0$, so there is no easy way to cut out an open substack corresponding only to regular values. 
We leave this generalization as a problem for future study. This will necessitate using contributions from other works in the subject, such as
Inaba-Iwasaki-Saito \cite{InabaIwasakiSaito} \cite{InabaIwasakiSaito2} and Gukov-Witten
\cite{GukovWitten}.

This paper corresponds to my talk in the conference ``Interactions with Algebraic Geometry'' in Florence (May 30th-June 2nd 2007), 
just a week after the Augsberg conference. Sections \ref{sec-tatetwistor}--\ref{sec-strict} were added later. We hope that the observation we
make here can contribute to some understanding of this subject, which is related to a number of other works such as the notion of 
$tt^{\ast}$-geometry \cite{Hertling} \cite{Schafer}, geometric Langlands theory \cite{GukovWitten}, Deligne cohomology \cite{EsnaultViehweg} \cite{Gajer}, 
harmonic bundles \cite{Biquard} \cite{Mochizuki} and twistor $\Dd$-modules \cite{Sabbah}, 
Painlev\'e equations \cite{Boalch} \cite{InabaIwasakiSaito} \cite{InabaIwasakiSaito2}, and
the theory of rank one local systems on open varieties
\cite{Budur} \cite{Dimca} \cite{DimcaMaisonobeSaito}
\cite{DimcaPapadimaSuciu} \cite{Libgober}. 

\section{Preliminary definitions}
\label{sec-prelim}

It is useful to follow Deligne's way of not choosing a square root of $-1$. This serves as a guide to making constructions
more canonically, which in turn serves to avoid encountering unnecessary choices later. We do this because one of the goals below is
to understand in a natural way the Tate twistor structure $T(1)$. In particular, this has served as a useful guide for finding
the explanation given in \S \ref{sub-tate-integer} for the sign change necessary in the logarithmic version $T(1,\log )$. 
We have tried, when possible, to explain the motivation for various other minus signs too. 
{\em Caution:} there may remain sign errors specially towards the end. 

Let $\cc$ be an algebraic closure of $\rr$, but without a chosen $\sqrt{-1}$. 
Nevertheless, occasional explanations using a choice of $i = \sqrt{1} \in \cc$ are admitted
so as not to leave things too abstruse.

\subsection{Complex manifolds}
\label{sub-complex}
There is a notion of $\cc$-linear complex manifold $M$. This
means that at each point $m\in M$ there should be an action of $\cc$ on the real tangent space $T_{\rr}(M)$. Holomorphic functions are
functions $M\rightarrow \cc$ whose $1$-jets are compatible with this action. Usual Hodge theory still goes through without refering to
a choice of $i\in \cc$. We get the spaces $A^{p,q}(M)$ of forms on $M$, and the operators $\partial$ and $\overline{\partial}$. 

Let $\rr ^{\perp}$ denote the imaginary line in $\cc$. This is what Deligne would call $\rr (1)$ however we don't divide by $2\pi$. 

If $h$ is a metric on $M$, there is a naturally associated two-form $\omega\in A^2(M, \rr ^{\perp})$.  
The K\"ahler class is $[\omega ]\in H^2(X,\rr ^{\perp}) = H^2(X,\rr (1))$. 
Classically this is brought back to
a real-valued $2$-form by multiplying by a choice of $\sqrt{-1}$, but we shouldn't do that here. Then, the operators $L$ and $\Lambda$ are
defined independently of $\sqrt{-1}$, but they take values in $\rr ^{\perp}$. The K\"ahler identities now hold without $\sqrt{-1}$ appearing; 
but it is left to the reader to establish a convention for the signs.  

Note that $M$ may not be canonically oriented. If $Q = \{ \pm \sqrt{-1}\}$ as below, then the orientation of $M$ is canonically defined
in the $n$-th power $Q^n\subset \cc$ where $n=dim_{\cc}M$. In particular, the orientation in codimension $1$ is always ill-defined. 
If $D$ is a divisor, this means that $[D]\in H^2(M, \rr ^{\perp})$. This agrees with what happens with the K\"ahler metric. 
Similarly, if $L$ is a line bundle then $c_1(L)\in H^2(M, \rr ^{\perp})$. 

If $X$ is a quasiprojective variety over $\cc$ then $X(\cc )$ has a natural topology. Denote this topological space by $X^{\rm top}$.
It is the topological space underlying a structure of complex analytic space. In the present paper, we don't distinguish too much between algebraic
and analytic varieties, so we use the same letter $X$ to denote the analytic space. 

Let $\overline{X}$ denote the conjugate variety, where the structural map is composed with the complex conjugation $Spec(\cc )\rightarrow Spec(\cc )$.
In terms of coordinates, $\overline{X}$ is given by equations whose coefficients are the complex conjugates of the coefficients of the equations of $X$.
There is a natural isomorphism 
$\varphi : X^{\rm top}\stackrel{\cong}{\rightarrow} \overline{X}^{\rm top}$, which in terms of equations is given by $x\mapsto \overline{x}$ conjugating
the coordinates of each point.

\subsection{The imaginary scheme of a group}
\label{sub-imaginary}
Let $Q\subset \cc$ be the zero set of the polynomial
$x^2+1$, in other words $Q = \{ \pm \sqrt{-1}\}$. Multiplication by $-1$ is equal to multiplicative inversion, which is equal to complex
conjugation, and these all define an involution 
$$
c_Q:Q\rightarrow Q.
$$

Suppose $Y$ is a set provided with an involution $\tau _Y$. Then we define a new set denoted $Y^{\perp}$ starting from $Hom (Q,Y)$ with its two involutions 
$$
f\mapsto \tau _Y\circ f, \;\;\; f\mapsto f\circ c_Q.
$$
Let $Y^{\perp}$ be the equalizer of these two involutions, in other words 
$$
Y^{\perp}:= \{ f\in Hom (Q,Y), \;\; \tau _Y\circ f = f\circ c_Q \} . 
$$
Thus, an element of $G^{\perp}$ is a function $\gamma : q \mapsto \gamma (q)$ such that $\gamma (-q)= \tau _Y(\gamma (q))$.

The two equal involutions will be denoted $\tau _{Y^{\perp}}$. 

If we choose $i = \sqrt{-1}\in \cc$, then $Y^{\perp}$ becomes identified with $Y$ via $\gamma \mapsto \gamma (i)$. 
For the opposite choice of $i$ this isomorphism gets composed with $\tau _Y$. 

If $G$ is a scheme defined over $\rr$ then $G^{\perp}$ is also defined over $\rr$. For example, if $G=\rr$ with involution $x\mapsto -x$
then $G^{\perp}$ is the imaginary
line $\rr ^{\perp}$ defined above. 

Using the involution $x\mapsto -x$ we could also define $\cc ^{\perp}$. However there is a natural isomorphism $\cc \cong \cc ^{\perp}$
sending $a$ to the function $\gamma : q \mapsto qa$.  In view of this, and in order to lighten notation, we don't distinguish between
$\cc$ and $\cc ^{\perp}$ even in places where that might be natural for example throughout \S \ref{sec-exact}.  

If $G$ is an algebraic group over $\cc$, it has an involution $g\mapsto g^{-1}$, which doesn't preserve
the group structure unless $G$ is abelian. Using this involution yields a scheme denoted $G^{\perp}$.
If $G$ is abelian then $G^{\perp}$ has a natural group structure. In general
there is a natural action of $G$ on $G^{\perp}$ by conjugation: if $g\in G$ and 
$q\mapsto \gamma (q)$ is an element of $G^{\perp}$ then the element $q\mapsto g\gamma (q)g^{-1}$ is again an element of $G^{\perp}$. 

Since we will be looking mostly at rank one local systems, we are particularly interested in the case $G=\Gm$. Then
$$
\Gm ^{\perp} = \{ (x,y)\in \cc ^2, \;\; x^2 + y^2 = 1 \} .
$$
The equality is given as follows: to an element $\gamma : q \mapsto \gamma (q)$ of $\Gm ^{\perp}$, associate
the point $(x,y)$ given by
$$
x:= \frac{1}{2}\sum _{q\in Q}\gamma (q), 
$$
$$
y:= \frac{1}{2}\sum _{q\in Q}q^{-1}\gamma (q). 
$$
Call $(x,y)$ the {\em circular coordinates} on $\Gm ^{\perp}$. 

It is well-known that the exponential should really be considered as a map ${\rm exp}: \cc (1) \rightarrow \Gm$. 
Alternatively, we can view the exponential as a map 
$$
{\rm exp}^{\perp} : \cc \rightarrow \Gm ^{\perp}.
$$
given in circular coordinates by 
$$
{\rm exp}^{\perp}(\theta ) := (\cos (2\pi \theta ), \sin (2\pi \theta )).
$$
It is useful to include $2\pi$ here because of the relationship with residues, see below.
The kernel of ${\rm exp}^{\perp}$ is the usual $\zz \subset \cc$. We call $\theta$ a {\em circular logarithm}
of its image point.

\subsection{Logarithmic connections}
\label{sub-logarithmic}
Suppose $X$ is a smooth projective variety and $D\subset X$ is a normal crossings divisor.
Let $U:= X-D$ and $j: U\hookrightarrow X$ be the inclusion. 
Recall that the sheaf of {\em logarithmic forms} on $(X,D)$ denoted $\Omega ^1_X(\log D)$ is the
locally free sheaf, subsheaf of $j_{\ast}\Omega ^1_{U}$,
which is generated in local coordinates by $d\log z_1,\ldots , d\log z_k , dz_{k+1},\ldots , dz_n$
whenever $(z_1,\ldots , z_n)$ is a system of local coordinates in which $D$ is given by $z_1\cdots z_k = 0$.

A {\em logarithmic connection} $\nabla$ on a vector bundle $E$ over $X$, is a morphism of sheaves
$$
\nabla : E\rightarrow E\otimes _{\Oo _X}\Omega ^1_X(\log D)
$$
such that $\nabla (af) = a\nabla (f) + da \cdot f$. More generally, for $\lambda \in \cc$
a {\em logarithmic $\lambda$-connection}
is a map $\nabla$ as above such that $\nabla (af) = a\nabla (f) + \lambda da \cdot f$. For $\lambda = 1$ this is a usual connection,
and for any $\lambda \neq 0$ we get a usual connection $\lambda ^{-1}\nabla$. For $\lambda = 0$ it is a Higgs field. 

The {\em Riemann-Hilbert correspondence} takes a vector bundle with logarithmic connection $(E,\nabla )$ to its monodromy
representation $\rho$. 
This is well-defined independent of the choice of $\sqrt{-1}\in \cc$. 
In the compact case, it is an equivalence of categories between vector bundles with connection, and 
representations up to conjugacy. However, in our open case there are many possible
choices of $(E,\nabla )$ which give the same representation $\rho$, because of the possibility of making {\em meromorphic gauge transformations}
along the components of the divisor $D$, see \S \ref{sub-meromorphicgauge} below. 
For any $\lambda \neq 0$, the {\em monodromy representation} of a $\lambda$-connection is by definition that of
the normalized connection $\lambda ^{-1}\nabla$. 

\subsection{Local monodromy}
\label{sub-localmonodromy}

The reason for introducing the imaginary scheme $G^{\perp}$ was to discuss local monodromy. Keep the notation that $(X,D)$ is a smooth variety
with a normal crossings divisor. For each component $D_i$ of $D$, choose a point $x_i$ near $D_i$. 
Choose a local coordinate system $(z_1,\ldots , z_n)$ for $X$ near a smooth point of $D_i$, such that $D_i$ is given by $z_1=0$ and 
$x_i$ is the point $(\epsilon , 0 , \ldots , 0)$. 
We get a map from $Q$ to $\pi _1(X,x_i)$ as
follows: for $q\in Q$, consider the path $t\mapsto (\epsilon \cdot e^{2\pi q t}, 0, \ldots , 0)$. For $-q$ we get the inverse path,
in other words we really have an element of $\pi _1(X,x_i) ^{\perp}$. Conjugating by a choice of path from $x$ to $x_i$, we get an
element 
$$
\gamma _{D_i} \in \pi _1(X,x)^{\perp}.
$$ 
It is well-defined up to the conjugation action of $\pi _1(X,x)$.

If $\rho : \pi _1(X,x)\rightarrow G$ is a representation, we obtain by functoriality of the construction $(\; ) ^{\perp}$
a map 
$$
\rho ^{\perp}: \pi _1(X,x)^{\perp}\rightarrow G^{\perp},
$$
so we get the {\em local monodromy element} 
$$
{\rm mon}(\rho , D_i):= \rho ^{\perp} (\gamma _{D_i}) \in G^{\perp},
$$ 
which is well-defined up to the conjugation action of $G$. If $G$ is abelian, such as $G=\Gm$, then the local monodromy element is well-defined.

\subsection{Meromorphic gauge group}
\label{sub-meromorphicgauge}

Since we will mostly be working with line bundles, we describe the meromorphic gauge group only in this case. It is much easier than in
general. Decompose $D=D_1+\ldots + D_k$ into a union of smooth irreducible components. The gauge group is just 
$$
\Gg := \zz ^k ,
$$
acting as follows. Suppose $(L,\nabla )$ is a line bundle with logarithmic $\lambda$-connection on $(X,D)$ and 
$g=(g_1,\ldots , g_k)$ is an element of $\Gg$. Then the new line bundle  is
defined by
$$
L^g:= L(g_1D_1 + \ldots + g_kD_k),
$$
and $\nabla ^g$ is the unique logarithmic $\lambda$-connection on $L^g$ which coincides with $\nabla$ over the open set $U$
via the canonical isomorphism $L^g|_{U}\cong L|_{U}$. 

The gauge transformation affects the first Chern class:
\begin{equation}
\label{c1formula}
c_1(L^g) = c_1(L) + g_1[D_1] + \ldots + g_k [D_k] ,
\end{equation}
and the residue:
\begin{equation}
\label{residueformula}
{\rm res}(\nabla ^g ; D_i ) = {\rm res}(\nabla ; D_i) - \lambda g_i.
\end{equation}

For convenience, here is the proof of \eqref{residueformula}. 
If $u$ is a nonvanishing holomorphic ection of $L$ near a point of $D_i$ (but not near the other divisor components), then 
$u$ may also be considered as a meromorphic section of $L^g$, but it has a zero of order $g_i$ along $D_i$.
Hence, $u':=z_i^{-g_i}u$ is a nonvanishing holomorphic section of $L^g$ near our point of $D_i$. 

Let $R_i:= {\rm res}(\nabla ; D_i)$, so 
$$
\nabla (au) = \lambda d(a)u + R_i \frac{dz_i}{z_i}a u + \ldots .
$$
Generically, $\nabla $ and $\nabla ^g$ are the same connection. However, a section of $L^g$ is written in terms of the unit section $u'$ as $au' = az_i^{-g_i}u$,
so
$$
\nabla ^g (au') = \nabla (az_i^{-g_i}u) = \lambda d(a) u' - \lambda g_i \frac{dz_i}{z_i}a u' + R_i \frac{dz_i}{z_i}a u +\ldots .
$$
The residue of $\nabla ^g$ is
${\rm res}(\nabla ^g ; D_i ) = R_i - \lambda g_i$
as claimed in \eqref{residueformula}.

The restrictions to the open set are isomorphic:
$$
(L^g, \nabla ^g) |_{U} \cong (L,\nabla )|_{U},
$$
hence the monodromy representations are the same in the case $\lambda \neq 0$. Conversely, again in the case $\lambda \neq 0$,
given $(L,\nabla )$ and $(L',\nabla ')$ two logarithmic $\lambda$-connections with the same monodromy representations,
there is a unique meromorphic gauge transformation $g\in\Gg$ such that $(L',\nabla ') \cong (L^g, \nabla ^g)$. 

Throughout the paper, make the convention that spaces and maps are in the complex analytic category. The reader will notice
which parts of these analytic spaces have natural algebraic structures, for example the Betti spaces or the charts $M_{\rm Hod}(X,\log D)$. 
Often these algebraic charts will be divided by a group action or glued to other charts in an analytic way, so the result only has a
structure of analytic space.

\section{The Deligne glueing in the compact case}
\label{sec-deligneglue}

In this section we recall the Deligne glueing construction for the twistor space,
in the case of a compact base variety $X$, that is $D=\emptyset$. The hyperk\"ahler structure
on the moduli space was constructed by Hitchin \cite{Hitchin}, who also considered the Penrose twistor space
associated to the quaternionic structure. Deligne in \cite{DeligneLett} proposed a construction of the
twistor space using a deformation called the space of {\em $\lambda$-connections} closely related to the Hodge filtration, 
plus the Riemann-Hilbert correspondence relating connections on $X$ and the conjugate variety $\overline{X}$.
Apparently Witten contributed 
something too because Deligne's letter \cite{DeligneLett} starts off: 
\begin{quotation}
{\small
``As I understand, Hitchin's understanding of why one has
a hyperk\"ahler structure---as explained to me by Witten---works in your case. \ldots''.}
\end{quotation}
The twistor space structure is  related to the notion of $tt^{\ast}$ geometry \cite{CecottiVafa} \cite{Hertling} \cite{Schafer}. 
The idea of a deformation relating de Rham and
Dolbeault cohomology goes back further, to the theory of $\Gamma$-factors \cite{Deninger}, 
Esnault's notion of $\tau$-connection \cite{Esnault}, Dolbeault homotopy theory
\cite{NeisendorferTaylor}, to the relation between cyclic and Hochschild cohomology
\cite{Connes} \cite{Kaledin}, and to singular perturbation theory \cite{Voros}. 

\subsection{Moduli spaces}
\label{sec-moduli}
Fix a basepoint $x\in X$. Complex conjugation provides a map of topological spaces $\varphi : X^{\rm top}\rightarrow 
\overline{X}^{\rm top}$, which is antiholomorphic for the complex structures. In particular, we get
$$
\varphi _{\ast}: \pi _1(X,x)\stackrel{\cong}{\rightarrow} \pi _1(\overline{X},\overline{x}).
$$

Recall the following moduli spaces or moduli stacks. Usually we don't distinguish
between moduli stacks or their universal categorical quotients which are moduli spaces. Also we are fixing
the target group as $GL(r,\cc )$ which will be left out of the notation. In this section we let $r$ be arbitrary,
although the next sections will specialize to $r=1$. 

Write $M_{\rm Hod}(X)\rightarrow \aaa ^1$ for the moduli space or stack of semistable vector bundles of rank $r$ with $\lambda$-connection
with vanishing Chern classes. The fibers over $0$ and $1$ are denoted respectively
$M_{\rm Dol}(X)$ and $M_{\rm DR}(X)$.
The group $\Gm$ acts, and over $\Gm \subset \aaa ^1$ this action provides
an isomorphism 
$$
M_{\rm Hod}(X)\times _{\aaa ^1}\Gm \cong \Gm \times M_{\rm DR}(X).
$$

These natural constructions applied to the conjugate variety give conjugate varieties:
$$
M_{\rm Hod}(\overline{X}) \cong \overline{M_{\rm Hod}(X)},
\;\;\;
M_{\rm DR}(\overline{X}) \cong \overline{M_{\rm DR}(X)},
\;\;\;
M_{\rm Dol}(\overline{X}) \cong \overline{M_{\rm Dol}(X)}.
$$

We also have the Betti moduli space \cite{LubotskyMagid}
$$
M_{\rm B}(X) = \frac{{\rm Hom}(\pi _1(X,x), GL(n,\cc ))}{GL(n,\cc )}
$$
where the quotient is either a stack quotient or a universal categorical quotient depending on 
which framework we are using. The
Riemann-Hilbert correspondence gives an isomorphism of analytic spaces or stacks
$$
M_{DR}(X)^{\rm an} \cong M_{B}(X)^{\rm an}.
$$
It doesn't depend on a choice of square root of $-1$. 

\subsection{Glueing}
\label{sec-glue}
The {\em Deligne glueing} is an isomorphism of complex analytic spaces
$$
{\bf d} : M_{\rm Hod}(X)\times _{\aaa ^1}\Gm  \cong M_{\rm Hod}(\overline{X})\times _{\aaa ^1}\Gm  .
$$
It is defined as follows. A point in the source is a triple $(\lambda , E, \nabla )$ where
$\lambda \in \Gm \subset \aaa ^1$, where $E$ is a vector bundle on $X$, and $\nabla $ is
a $\lambda$-connection on $E$. This corresponds to the point $(\lambda , (E, \lambda ^{-1}\nabla ))$
in $\Gm \times M_{\rm DR}(X)$. Let $\rho (\lambda ^{-1}\nabla )$ denote the monodromy
representation of $\pi _1(X,x)$ corresponding to 
the connection $\lambda ^{-1}\nabla$. Then $\rho (\lambda ^{-1}\nabla ) \circ \varphi _{\ast}^{-1}$ 
is a representation of $\pi _1(\overline{X},\overline{x})$. It corresponds to a
vector bundle with connection $(F, \Phi )$ on $\overline{X}$. 

This vector bundle with connection may be characterized as follows:
\newline
---we have a natural identification $F_{\overline{x}}\cong E_x$; and 
\newline
---the monodromy of $(F,\Phi )$ around a loop $\gamma $ in $\pi _1(\overline{X}, \overline{x})$
is equal, via this identification, to the monodromy of $(E,\lambda ^{-1}\nabla )$ around 
the loop $\varphi ^{-1}(\gamma )$ in $\pi _1(X,x)$.

To continue with the definition of ${\bf d}$, 
choose the
point $\mu = \lambda ^{-1} \in \Gm$, and look at the point 
$$
(\mu , (F,\Phi ))\in \Gm \times M_{\rm DR}(\overline{X}).
$$
It corresponds to a point 
$$
(\mu , F, \mu \Phi )\in M_{\rm Hod}(\overline{X})\times _{\aaa ^1}\Gm .
$$
We set 
$$
{\bf d}(\lambda , E, \nabla ):= (\mu , F, \mu \Phi ).
$$
Note that by definition ${\bf d}$ covers the map $\Gm \rightarrow \Gm $ given by $\lambda \mapsto \mu := \lambda ^{-1}$.

This isomorphism can now be used to glue together the two analytic spaces 
$M_{\rm Hod}(X)^{\rm an}$ and $M_{\rm Hod}(\overline{X})^{\rm an}$ along their open sets
which are the source and target of ${\bf d}$. The resulting space is denoted $M_{\rm DH}(X)$
for {\em Deligne-Hitchin}. It is Hitchin's twistor space \cite{Hitchin}, constructed as suggested by Deligne
\cite{DeligneLett}. 

Interpreting $\pp ^1$ as obtained by glueing two copies of $\aaa ^1$ along
the map $\mu = \lambda ^{-1}$, we get a map $M_{\rm DH}(X)\rightarrow \pp ^1$. 

It is essential to make some remarks on the choices above. The space $M_{\rm Hod}(X)$ and its 
conjugate counterpart admit numerous natural automorphisms, for example multiplication by any 
element of $\Gm$, but also taking the dual of an object. In particular, it would have been possible
to insert these operations in the middle of the definition of ${\bf d}$. They would extend
to automorphisms of either of the two sides being glued, so the resulting space would be isomorphic.
We feel that it is reasonable
at each step of the way to use the simplest choice. 
This will nonetheless result in more complicated choices in the definition of preferred sections later. 

Note that we have not used any choice of $\sqrt{-1}\in \cc$ in the construction, so $M_{\rm DH}(X)$ is independant of that. 

According to the construction, notice that we have two inclusions
$$
u: M_{\rm Hod}(X)\hookrightarrow M_{\rm DH}(X), \;\;\; v:M_{\rm Hod}(\overline{X})\hookrightarrow M_{\rm DH}(X).
$$
We have $u(\lambda , E,\nabla ) = v (\mu , F , \mu \Phi )$ exactly when 
${\bf d}(\lambda , E, \nabla ) = (\mu , F, \mu \Phi )$ as constructed above.

We leave to the reader the problem of comparison of $M_{\rm DH}(X)$ and $M_{\rm DH}(\overline{X})$. 

\subsection{The antipodal involution}
\label{sec-antipodal}
A crucial part of the structure is an antilinear involution $\sigma : M_{\rm DH}(X)\rightarrow M_{\rm DH}(X)$, covering
the antipodal involution of $\pp ^1$. 
Note that the antipodal involution exchanges the two charts $\aaa ^1$ of $\pp ^1$. Thus, in order to
define $\sigma$ it suffices to define an antiholomorphic map
$$
\sigma _{{\rm Hod}, X} : M_{\rm Hod}(X)\rightarrow M_{\rm Hod}(\overline{X})
$$
which is an antilinear isomorphism, and involutive:
that is $\sigma _{{\rm Hod},\overline{X}}\circ \sigma _{{\rm Hod},X} = {\rm Id}$. 

Suppose we have a point $(\lambda , E, \nabla )$. Taking the complex conjugate of everything gives a point
$(\overline{\lambda}, \overline{E},\overline{\nabla})\in M_{\rm Hod}(\overline{X})$. This gives an
antiholomorphic map denoted
$$
C _{{\rm Hod}, X} : M_{\rm Hod}(X)\rightarrow M_{\rm Hod}(\overline{X}).
$$
We need to show that it is compatible with the glueing ${\bf d}$ in the sense that
\begin{equation}
\label{cdcompatible}
C_{{\rm Hod},\overline{X}} \circ {\bf d} = {\bf d}^{-1} \circ C_{{\rm Hod},X}.
\end{equation}
First of all  $C_{{\rm Hod},X}$ and $C_{{\rm Hod},\overline{X}}$ intertwine 
the multiplication action of $\Gm$, with the complex conjugation $\Gm \cong \overline{\mathbb G}_m$. 
Also, $C$ and ${\bf d}$ both fix the de Rham fiber over $\lambda = 1$. Hence, to verify the compatibility
\eqref{cdcompatible}, it suffices to verify it over $\lambda = 1$. Here
$$
C_{{\rm DR},X}: M_{\rm DR}(X) \rightarrow M_{\rm DR}(\overline{X}), \;\;\; (E,\nabla )\mapsto (\overline{E},\overline{\nabla})
$$
and composing with the isomorphism ${\bf d}^{-1}$ which comes from $\pi _1(X,x)\cong \pi _1(\overline{X},\overline{x})$
we get an antilinear automorphism of $M_{\rm DR}(X)$. It is easy to see that, in terms of the isomorphism with $M_B(X)$,
this antilinear automorphism is just the complex conjugation action on representations, $\rho \mapsto \overline{\rho}$
where $\overline{\rho} (\gamma ) := \overline{\rho (\gamma )}$. Similarly, $C_{{\rm Hod},\overline{X}} \circ {\bf d}$ is
also seen to be the same automorphism $\rho \mapsto \overline{\rho}$. This proves the equality \eqref{cdcompatible}. 

With this compatibility, $C_{{\rm Hod},X}$ and $C_{{\rm Hod},\overline{X}}$ glue to give an 
antiholomorphic involution 
$$
C: M_{\rm DH}(X)\rightarrow M_{\rm DH}(X).
$$
covering the involution $\lambda \mapsto \overline{\lambda}^{\, -1}$ of $\pp ^1$.
As described above, on the  fiber over $\lambda = 1$ which is $M_{\rm DR}(X)\cong M_B(X)$, the involution is
$C(\rho ) = \overline{\rho}$.

The dual of a vector bundle with $\lambda$-connection $(E,\nabla )$ is again 
a vector bundle with $\lambda$-connection denoted $(E^{\ast}, \nabla ^{\ast})$. This operation
is compatible with multiplication by $\Gm$, and with the operation of taking the dual
of a local system via the Riemann-Hilbert correspondence. Therefore, it is compatible with the
glueing ${\bf d}$ and gives an involution, holomorphic this time, denoted 
$$
D: M_{\rm DH}(X)\rightarrow M_{\rm DH}(X)
$$
which covers the identity of $\pp ^1$. 

Finally, multiplication by $-1\in \Gm$ gives an involution of $M_{\rm DH}(X)$ denoted by $N$,
covering the involution $\lambda \mapsto -\lambda $ of $\pp ^1$. 

\begin{lemma}
The involutions $C$, $D$ and $N$ commute. Their product is an involution $\sigma$ of 
$M_{\rm DH}(X)$  covering the antipodal involution $\lambda \mapsto -\overline{\lambda}^{\, -1}$
of $\pp ^1$.
\end{lemma}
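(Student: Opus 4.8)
The plan is to reduce everything to elementary compatibilities among three natural operations — complex conjugation, dualization, and the $\Gm$-scaling $(\lambda,\nabla)\mapsto(t\lambda,t\nabla)$ — on vector bundles with $\lambda$-connection, checked on the two coordinate charts $M_{\rm Hod}(X)$ and $M_{\rm Hod}(\overline{X})$ out of which $M_{\rm DH}(X)$ was glued. The compatibility of each of $C$, $D$, $N$ with the glueing ${\bf d}$ has already been established above, so each of the three is a genuine self-map of $M_{\rm DH}(X)$, and it suffices to verify the assertions on each chart.

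First I would record how $C$, $D$, $N$ act on the chart $u(M_{\rm Hod}(X))$. By construction $D$ preserves each chart and acts by $(\lambda,E,\nabla)\mapsto(\lambda,E^{\ast},\nabla^{\ast})$; $N$ preserves each chart and acts by the $\Gm$-element $-1$, namely $(\lambda,E,\nabla)\mapsto(-\lambda,E,-\nabla)$ (this is consistent with the glueing, since ${\bf d}$ intertwines scaling by $t$ on one chart with scaling by $t^{-1}$ on the other, and $(-1)^{-1}=-1$); and $C$ carries $u(M_{\rm Hod}(X))$ into $v(M_{\rm Hod}(\overline{X}))$ by $C_{{\rm Hod},X}(\lambda,E,\nabla)=(\overline{\lambda},\overline{E},\overline{\nabla})$, with the symmetric formulas on the other chart. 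Then I would check pairwise commutation by direct computation: conjugating all the defining equations commutes with passing to the dual bundle-with-dual-$\lambda$-connection, so $CD=DC$; conjugation commutes with scaling by $t=-1=\overline{-1}$, so $CN=NC$; and the dual of $t\nabla$ on $E^{\ast}$ is $t\nabla^{\ast}$, so $DN=ND$. Since the two charts cover $M_{\rm DH}(X)$ and the check on $M_{\rm Hod}(\overline{X})$ is identical with $\overline{X}$ in place of $X$, the three involutions commute as self-maps of $M_{\rm DH}(X)$.

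It then follows formally that $\sigma:=C\circ D\circ N$ is independent of the order of composition, that it is antiholomorphic (the product of the antiholomorphic $C$ with the holomorphic $D$ and $N$), and that $\sigma^{2}=C^{2}D^{2}N^{2}={\rm Id}$. For the map covered on $\pp^{1}$: since $C$ covers $\lambda\mapsto\overline{\lambda}^{\,-1}$, $N$ covers $\lambda\mapsto-\lambda$, and $D$ covers the identity, the composite $\sigma$ covers $\lambda\mapsto\overline{(-\lambda)}^{\,-1}=-\overline{\lambda}^{\,-1}$, which is the antipodal involution of $\pp^{1}$.

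The only point requiring care — and what I would flag as the main potential pitfall rather than a genuine obstacle — is the chart bookkeeping: $C$ exchanges the two charts while $D$ and $N$ preserve them, so each commutation identity must be read as an equality of maps landing in the appropriate chart, and one must keep straight which $\Gm$-action (direct on $M_{\rm Hod}(X)$, inverted on $M_{\rm Hod}(\overline{X})$) is in play and track the $\overline{(-\lambda)}^{\,-1}=-\overline{\lambda}^{\,-1}$ sign on $\pp^{1}$. Beyond that, nothing deeper enters: the commutations are formal consequences of the fact that conjugation, duality, and $\Gm$-scaling of $\lambda$-connections commute with one another.
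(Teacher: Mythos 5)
Your proposal is correct and follows essentially the same route as the paper: the commutations are reduced to the natural compatibilities of conjugation, dualization, and scaling by $-1\in\Gm$ on $\lambda$-connections, and the covering statement on $\pp^1$ is obtained by composing the three involutions $\lambda\mapsto\overline{\lambda}^{\,-1}$, $\lambda\mapsto\lambda$, $\lambda\mapsto-\lambda$. Your version merely spells out the chart bookkeeping that the paper leaves implicit.
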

{\em Proof:}
The complex conjugate of the dual of a vector bundle is naturally isomorphic to the dual of the complex 
conjugate. These also clearly commute with the operation of multiplying the connection by $-1$. Hence, the three
involutions commute, which implies that the product $CDN$ is again an involution. It is antilinear because
$C$ is antilinear whereas $D$ and $N$ are $\cc$-linear. Since $C$, $D$ and $N$ cover respectively the 
involutions $\lambda \mapsto \overline{\lambda}^{\, -1}$, $\lambda \mapsto \lambda$ and $\lambda \mapsto -\lambda$,
their product covers the product of these three, which is the antipodal involution. 
\eop

\subsection{Preferred sections and the twistor property}
\label{sec-preferred-twistor}
Deligne proposed to construct a family of ``preferred sections'' of the glued space $M_{\rm DH}(X)$, one for each harmonic bundle on $X$.

\begin{proposition}[\cite{Hitchin} \cite{DeligneLett} \cite{hfnac}]
\label{prop-pref}
Suppose $(E,\partial , \delbar , \theta , \overline{\theta} )$ is a harmonic bundle on $X$.
Then it leads to a section $\Pp : \pp ^1 \rightarrow M_{\rm DH}(X)$
which is $\sigma$-invariant and which sends $\lambda \in \aaa ^1$ to the 
holomorphic bundle $(E, \delbar + \lambda \overline{\theta})$ with 
$\lambda$-connection $\nabla = \lambda \partial + \theta$. 
\end{proposition}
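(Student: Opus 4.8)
The plan is to check that the asserted formula defines a holomorphic section of $M_{\rm Hod}(X)$ over the chart $\aaa ^1\subset \pp ^1$, then that it extends across $\lambda =\infty$ by matching it with the preferred family of the conjugate harmonic bundle under the glueing ${\bf d}$, and finally to verify $\sigma$-invariance of the resulting global section.

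\emph{On the first chart.} Set $\delbar _\lambda := \delbar +\lambda \overline{\theta}$ and $\nabla _\lambda := \lambda \partial +\theta$, and let $D_\lambda := \delbar _\lambda +\nabla _\lambda = A+\lambda B$ with $A := \delbar +\theta$, $B := \partial +\overline{\theta}$. Then $D_\lambda ^2 = A^2+\lambda (AB+BA)+\lambda ^2 B^2$, so $D_\lambda ^2=0$ for all $\lambda$ iff $A^2=AB+BA=B^2=0$, and expanding these three equations by bidegree recovers precisely the harmonic bundle equations of \cite{hfnac}: $A^2=0$ encodes $\delbar \theta =0$ and $\theta \wedge \theta =0$; while $B^2=0$ and $AB+BA=0$ encode the $h$-adjoint identities $\partial \overline{\theta}=0$, $\overline{\theta}\wedge \overline{\theta}=0$, $\partial \theta =0$, $\delbar \overline{\theta}=0$, together with the Hitchin equation $F_h+[\theta ,\overline{\theta}\,]=0$. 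Hence $(E,\delbar _\lambda ,\nabla _\lambda )$ is a flat $\lambda$-connection for every $\lambda$; it is polystable with vanishing Chern classes (standard at $\lambda =0$ and $\lambda =1$, and in general because the harmonic metric supplies the needed estimates, cf.\ \cite{hfnac}), so it is a well-defined point of $M_{\rm Hod}(X)$. Since $\delbar _\lambda$ and $\nabla _\lambda$ are polynomial in $\lambda$, this is a holomorphic section $\Pp _1:\aaa ^1\to M_{\rm Hod}(X)\hookrightarrow M_{\rm DH}(X)$, equal to the Higgs bundle $(E,\delbar ,\theta )$ at $\lambda =0$ and to the flat bundle $(E,\delbar +\partial +\theta +\overline{\theta}\,)$ at $\lambda =1$.

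\emph{Extension across $\infty$.} The same $C^{\infty}$ bundle, regarded over $\overline{X}$ with holomorphic structure $\partial$, Higgs field $\overline{\theta}$, and metric $h$, is again a harmonic bundle $\overline{\mathcal H}$: its $(1,0)$ Chern operator over $\overline{X}$ is $\delbar$, and the $h$-adjoint of $\overline{\theta}$ is $\theta$. Applying the first step over $\overline{X}$ gives a holomorphic section $\Pp _2$ of $M_{\rm Hod}(\overline{X})$ whose value at $\mu$ has holomorphic structure $\partial +\mu \theta$ and $\mu$-connection $\mu \delbar +\overline{\theta}$. I would then check ${\bf d}(\Pp _1(\lambda ))=\Pp _2(\lambda ^{-1})$ for $\lambda \in \Gm$: after normalizing, the flat connection underlying $\Pp _1(\lambda )$ is the operator $D^{(\lambda )}:=\partial +\delbar +\lambda ^{-1}\theta +\lambda \overline{\theta}$, and the one underlying $\Pp _2(\mu )$ is $\overline{D}^{(\mu )}:=\partial +\delbar +\mu ^{-1}\overline{\theta}+\mu \theta$, and these agree as operators on the common $C^{\infty}$ bundle exactly when $\mu =\lambda ^{-1}$. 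So the monodromy of $\overline{D}^{(1/\lambda )}$, transported by $\varphi _\ast$, is that of $D^{(\lambda )}$, which by the construction of ${\bf d}$ in \S \ref{sec-glue} is exactly the assertion ${\bf d}(\Pp _1(\lambda ))=\Pp _2(\lambda ^{-1})$. Hence $\Pp _1$ and $\Pp _2$ glue to a section $\Pp :\pp ^1\to M_{\rm DH}(X)$ restricting to the asserted formula on the first chart.

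\emph{$\sigma$-invariance, and the main obstacle.} Since $\sigma =CDN$, computed on the $M_{\rm Hod}$ charts by complex conjugation, duality, and multiplication of the connection by $-1$, I would trace $\Pp (\lambda )$ through each. Both $N$ and $D$ carry our family to the preferred family of another harmonic bundle --- $N$ to that of $(E,\delbar ,-\theta ,h)$ reparametrized by $\lambda \mapsto -\lambda$, and $D$ to that of the dual harmonic bundle --- so after $DN$ and then $C$ one lands in $M_{\rm Hod}(\overline{X})$. The key input is that the harmonic metric supplies a canonical isomorphism identifying the resulting conjugate--dual harmonic bundle with $\overline{\mathcal H}$ above: $h$ furnishes $\overline{E}\cong E^{\ast}$, and unitarity of the Chern connection together with $\overline{\theta}=\theta ^{\ast _h}$ matches the holomorphic structures and Higgs fields up to sign. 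Chasing $\Pp (\lambda )$ through $C$, $D$, $N$ and this identification should give $\sigma (\Pp (\lambda ))=\Pp (-\overline{\lambda}^{\, -1})$. The first two steps are routine --- a power-series expansion in $\lambda$ and the defining property of ${\bf d}$ --- and the main obstacle is this last one: one must keep the isomorphism ``$h$: harmonic bundle $\cong$ conjugate--dual harmonic bundle'' consistent with the normalizations built into ${\bf d}$ and $\sigma$ and with the $\sqrt{-1}$-free conventions of \S \ref{sec-prelim}, where the various minus signs --- including the one distinguishing $\sigma$ from $CD$ --- must be tracked with care. \eop
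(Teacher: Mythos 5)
Your proposal is correct, but it is organized differently from the paper's own argument. The paper defines the family only over the first chart $\aaa ^1$ by the stated formula, asserts that over $\Gm$ this partial section is invariant under the antipodal involution, and then \emph{defines} the extension across $\lambda =\infty$ as the $\sigma$-image of the graph, so that $\sigma$-invariance of the glued section holds by construction and no separate glueing computation is needed. You instead take the route that the paper only mentions in its last sentence (citing \cite[pp 20--24]{twistor}): you build the second-chart family directly from the conjugate harmonic bundle on $\overline{X}$ and verify ${\bf d}(\Pp _1(\lambda ))=\Pp _2(\lambda ^{-1})$ by the pleasant observation that the normalized flat operators $\partial +\delbar +\lambda ^{-1}\theta +\lambda \overline{\theta}$ literally coincide on the common $C^{\infty}$ bundle, and only then check $\sigma$-invariance by chasing through $C$, $D$, $N$ and the metric identification $\overline{E}\cong E^{\ast}$, $\overline{\theta}=\theta ^{\ast _h}$. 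What your route buys is concreteness: the extension across $\infty$ and the role of the Riemann--Hilbert glueing become explicit operator identities, and you also spell out the integrability $D_\lambda ^2=A^2+\lambda (AB+BA)+\lambda ^2B^2=0$ that the paper leaves implicit. What the paper's route buys is economy: invariance is automatic once the $\Gm$-invariance is checked, and there is only one nontrivial verification rather than two. Note, though, that the computation you flag as ``the main obstacle'' --- the conjugate--dual identification via $h$ with its signs --- is not avoided by the paper's arrangement either: it is exactly the content of the paper's ``one checks that over $\Gm$ the section is invariant,'' so both write-ups leave the same sign-chase at the level of a sketch, and your description of its mechanism (what $N$, $D$, $C$ do to the preferred family, and where the harmonic metric enters) is the correct one.
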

{\em Proof:}
We first define the value of the section at $\lambda \in \aaa ^1$. 
On the $\Cc ^{\infty}$ bundle $E$, consider the holomorphic structure 
$$
\delbar _{\lambda} := \delbar + \lambda \overline{\theta}.
$$
The holomorphic bundle $E^{\lambda}:= (E,\delbar _{\lambda})$ admits a 
$\lambda$-connection operator $\nabla _{\lambda} := \lambda \partial + \theta$. 
This gives a point $(E^{\lambda}, \nabla _{\lambda})$ in $M_{\rm Hod}(X)_{\lambda}$. 
One checks that over $\Gm \subset \aaa ^1$, this section is invariant under the antipodal involution operator.
Hence, taking  image of the graph of our section already defined over $\aaa ^1$,  by $\sigma$, gives the section over the other chart
$\aaa ^1$ at infinity, and over $\Gm$ these glue together. One can also define directly the value of the section on the complex conjugate chart,
see for example \cite[pp 20-24]{twistor}. 
\eop

In Hitchin's original point of view \cite{HitchinH} \cite{Hitchin}, the twistor space $M_{\rm DH}(X)$ came from the Penrose construction for the 
quaternionic structure on $M(X)$  whose different complex structures were
those of $M_{\rm Dol}$ and $M_{\rm DR}$. 
The Penrose twistor space has a natural product structure of the form $\pp ^1 \times M(X)$.

In Deligne's 
reinterpretation
\cite{DeligneLett}
we can first construct the space $M_{\rm DH}(X)$ using the notion of $\lambda$-connection, complex conjugation and the Riemann-Hilbert correspondence
as described above. The product structure is obtained by considering the family of preferred sections as described in the previous
proposition.  This leads back to the quaternionic structure by looking at the tangent space near a preferred section. The key to this beautiful procedure is
the observation that the relative tangent space, or equivalently the normal bundle, along a prefered section is a semistable bundle of slope $1$ on $\pp ^1$,
which is to say it is isomorphic to $\Oo _{\pp ^1}(1)^{\oplus a}$. This weight one property is equivalent to having a quaternionic structure, as was observed
in \cite{HitchinH}. 

There is an equivalence of categories between quaternionic vector spaces, and vector bundles of slope $1$ over $\pp ^1$ with
involution $\sigma$ covering the antipodal involution. If  $V=\Oo _{\pp ^1}(1)^d$ is a slope one bundle, the
space of sections is $H^0(\pp ^1,V) \cong \cc ^{2d}$. If $\sigma$ is an antipodal involution, the space of $\sigma$-invariant sections is a real
form of the space of sections, thus
$$
H^0(\pp ^1, V)^{\sigma}\cong \rr ^{2d},
$$
and the twistor property says that the map from here to any of the fibers $V_{\lambda}$ is an isomorphism.
This is what provides the single real vector space $\rr ^{2d}$ with a whole sphere of different complex
structures. 

With this equivalence, saying that the various complex structures on $M(X)$ correspond to a quaternionic structure is equivalent
to saying that the normal bundle to a preferred section has slope $1$. One can show directly the weight $1$ property given the
construction of $M_{\rm DH}(X)$ and the preferred sections of Proposition \ref{prop-pref}, see \cite{hfnac}. 
It then follows that the deformation space of a preferred section in 
the world of $\sigma$-invariant sections of the fibration $M_{\rm DH}(X)\rightarrow \pp ^1$, maps isomorphically to the tangent space 
of any of the moduli space fibers (for example $M_{\rm DR}(X)$ over $\lambda = 1$ or $M_{\rm Dol}(X)$ over $\lambda = 0$). 
It implies that, locally, there is a unique $\sigma$-invariant section going through any point, and gives an alternative proof of Hitchin's theorem
that the moduli space has a quaternionic structure. For rank one bundles, this property can be globalized:

\begin{lemma}
\label{uniglobal}
For bundles of rank $1$ on a compact $X$, the evaluation morphism at any point $p\in\pp ^1$ 
$$
\Gamma (\pp ^1,M_{\rm DH}(X))^{\sigma} \rightarrow M_{\rm DH}(X)_p
$$
is an isomorphism. 
\end{lemma}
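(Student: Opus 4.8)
The plan is to exploit the fact that, for rank one, $M_{\rm DH}(X)\to\pp^1$ is a commutative complex-analytic group. Each fibre $M_{\rm Hod}(X)_\lambda$ is a group under tensor product of line bundles with $\lambda$-connection, and the glueing ${\bf d}$, the conjugation $C$, the duality $D$ (which is inversion, an automorphism because the group is abelian) and the multiplication $N$ by $-1$ are all group homomorphisms; hence so is $\sigma=CDN$. Therefore $\Gamma(\pp^1,M_{\rm DH}(X))$ is a commutative group, $\Gamma(\pp^1,M_{\rm DH}(X))^\sigma$ a subgroup, and the evaluation at $p$ a group homomorphism, so it is enough to prove that this homomorphism is bijective.

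The second step is to realize $M_{\rm DH}(X)$ as a quotient of a slope-one bundle. Let $\Vv\to\pp^1$ be the normal bundle along the identity section, which is a preferred section (it comes from the trivial harmonic bundle); by the weight-one property recalled in \S\ref{sec-preferred-twistor} it is semistable of slope one, $\Vv\cong\Oo_{\pp^1}(1)^{\oplus b}$ with $b=\dim_\cc M_{\rm DH}(X)_p=\dim_\cc H^1(X,\cc)$, and it carries the antipodal involution $\sigma$. Since each $M_{\rm Hod}(X)_\lambda$ is an abelian complex Lie group it is the quotient of its Lie algebra by the kernel of the exponential; globalizing over $\pp^1$ — the exponential being $H^1(X,\cc)\to H^1(X,\cc^*)$ over $\lambda=1$, the Dolbeault description $\mathrm{Pic}^0(X)\times H^0(X,\Omega^1_X)$ over $\lambda=0$, the conjugate ones over the other chart, glued by ${\bf d}$ — identifies $M_{\rm DH}(X)$ with $\Vv/\Lambda$, where $\Lambda\subset\Vv$ is the period local system, with fibre $H^1(X,\zz(1))$. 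Two points have to be checked here: that the quotient map $\Vv\to M_{\rm DH}(X)$ is $\sigma$-equivariant, and --- the delicate point --- that $\sigma$ restricts to the identity on the constant integral sections $\Gamma(\pp^1,\Lambda)=H^1(X,\zz(1))$. This last is exactly a sign bookkeeping of the sort emphasized in \S\ref{sec-prelim}: complex conjugation acts by $-1$ on the Tate object $\zz(1)$, which cancels the $-1$ contributed by the duality involution $D$, while $N$ acts trivially in the flat trivialization over $\Gm$ and so contributes $+1$; hence $\sigma=\mathrm{id}$ on $\Lambda$.

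Granting the model, the conclusion is formal. Because $\pp^1$ is simply connected, $\Lambda$ is the constant sheaf $\underline{H^1(X,\zz(1))}$, so $H^1(\pp^1,\Lambda)=0$ and taking sections of $0\to\Lambda\to\Vv\to M_{\rm DH}(X)\to0$ over $\pp^1$ gives $\Gamma(\pp^1,M_{\rm DH}(X))=\Gamma(\pp^1,\Vv)/\Gamma(\pp^1,\Lambda)$, compatibly with evaluation at $p$ and with the projection $\Vv_p\to M_{\rm DH}(X)_p=\Vv_p/\Lambda_p$. By the equivalence between slope-one $\sigma$-bundles on $\pp^1$ and quaternionic vector spaces recalled in \S\ref{sec-preferred-twistor}, evaluation at $p$ is an isomorphism $\Gamma(\pp^1,\Vv)^\sigma\xrightarrow{\cong}\Vv_p$. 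For surjectivity, lift a point of $M_{\rm DH}(X)_p$ to $\Vv_p$, write it as $\tilde s(p)$ for the unique $\tilde s\in\Gamma(\pp^1,\Vv)^\sigma$, and project $\tilde s$, which stays $\sigma$-invariant. For injectivity, given a $\sigma$-invariant $s$ with $s(p)=e$, lift it to $\tilde s\in\Gamma(\pp^1,\Vv)$ and subtract the constant integral section with value $\tilde s(p)\in\Lambda_p$, so that $\tilde s(p)=0$; then $\sigma(\tilde s)-\tilde s$ is a constant integral section $\nu$, and applying $\sigma$ once more gives $\nu+\sigma(\nu)=0$, so $2\nu=0$ since $\sigma$ fixes the torsion-free lattice $\Lambda$, whence $\nu=0$; thus $\tilde s\in\Gamma(\pp^1,\Vv)^\sigma$ vanishes at $p$, so $\tilde s=0$ and $s=e$. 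If $H_1(X,\zz)$ has torsion the space $M_{\rm DH}(X)$ has several components, but $\sigma$ preserves each by the same sign computation, and one argues componentwise. The main obstacle is therefore not the counting of sections --- which is immediate from $\pi_1(\pp^1)=1$ and the slope-one property already in hand --- but the construction of the model $M_{\rm DH}(X)\cong\Vv/\Lambda$ together with its $\sigma$-equivariance, and in particular the verification that the antipodal involution fixes, rather than negates, the integral period lattice.
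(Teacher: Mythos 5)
Your route is in substance the paper's route: the paper also uses the group structure together with the exponential exact sequence $0\to A\to A\otimes \Oo _{\pp ^1}(1)\to M_{\rm DH}(X)\to B\to 0$ with $A=H^1(X,\zz )$, the vanishing of $H^1(\pp ^1,A)$ and $H^1(\pp ^1,\Oo _{\pp ^1}(1))$, and the weight one (quaternionic) property of the slope one bundle --- which is exactly your model $\Vv /\Lambda$ for the identity component. Your explicit verification that $\sigma$ fixes the lattice (conjugation acts by $-1$ on $\zz (1)$, cancelling the $-1$ coming from duality, while $N$ is trivial on the $M_B$ factor over $\Gm$) is correct, and it spells out a point the paper leaves implicit when it asserts that the $\sigma$-invariant parts of the section sequence remain exact. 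On the identity component your surjectivity and injectivity arguments are sound.

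The weak point is the sentence ``one argues componentwise.'' Injectivity does reduce to the identity component: if two $\sigma$-invariant sections agree at $p$, their ratio maps to a constant section of the finite component group with value $0$ at $p$, hence lies in $M_{\rm DH}(X)^o$, where your argument applies. Surjectivity does not reduce so cheaply: a point of $M_{\rm DH}(X)_p$ in a component indexed by a nonzero torsion class is not in the image of $\Vv _p$, and to transport the identity-component statement to that component, which is only a torsor under $M_{\rm DH}(X)^o$, you need at least one $\sigma$-invariant section of it --- knowing that $\sigma$ preserves the component is necessary but not sufficient, and your model produces no such section. This is precisely the exactness at the finite group $B$ in the paper's sequence of $\sigma$-invariant sections. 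The gap is fillable with material already in the paper: every component of $M_B(X)={\rm Hom}(H_1(X,\zz ),\cc ^{\ast})$ contains a character of finite order (prescribe it on the torsion of $H_1$ and extend trivially on a free complement); such a character is unitary, hence underlies a harmonic bundle with $\theta =0$, and Proposition \ref{prop-pref} gives a $\sigma$-invariant preferred section through it, after which your torsor translation completes surjectivity. As written, though, this step is asserted rather than proved.
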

\begin{proof}
In the rank one case, the moduli space is a Lie group so we can use its exponential exact sequence. The tangent at the identity
preferred section is purely semistable of slope $1$. There is a lattice $A= H^1(X,\zz )\cong \zz ^a$  and a finite group $B= H^2(X,\zz )$ such that we have an exact sequence
$$
0\rightarrow A \rightarrow A\otimes \Oo _{\pp ^1} (1)\rightarrow M_{\rm DH}(X) \rightarrow B \rightarrow 0.
$$
We have $H^1(\pp ^1,A)= H^1(\pp ^1, A\otimes \Oo _{\pp ^1})=H^1(\pp ^1,B)=0$, so taking sections gives an exact sequence. 
The subgroups of $\sigma$-invariants again form an exact sequence. The weight one property, equivalent to the quaternionic structure,
says
$$
\Gamma (\pp ^1, A\otimes \Oo _{\pp ^1} (1))^{\sigma} \stackrel{\cong}{\rightarrow} A\otimes \Oo _{\pp ^1} (1)_p.
$$
Comparing with the exact sequence of values at $p$ gives the result for $M_{\rm DH}(X)$. 
\end{proof}

Deligne gave the construction of a quaternionic structure associated to a weight $1$ real Hodge structure in \cite{DeligneLett}. 
Given a vector space $V$ with two filtrations $F$ and $\overline{F}$, we can form a bundle 
$\xi (V,F,\overline{F})\rightarrow \pp ^1$ and this bundle has slope $1$ if and only if the two filtrations are $1$-opposed, i.e. they define
a Hodge structure of weight $1$. In this sense, the twistor property is analogous to saying that a Hodge structure has weight $1$. 
These slightly different points of view are compatible for a preferred section which comes from a variation of Hodge structure, where the
tangent space to the moduli space  has a natural weight $1$ Hodge structure. 

The bundles of the form $\xi (V,F,\overline{F})$ are the slope $1$ bundles together with additional structure of an action of $\Gm$.
A somewhat different collection of additional structure involving a connection is used in the notion of $tt^{\ast}$ geometry
\cite{CecottiVafa} \cite{Hertling}, which also has its physical roots in Hitchin's twistor space. Schmid mentionned, during his courses on Hodge theory,
the similarity between the equations governing the local structure of variations of Hodge structure, and the monopole or Nahm's equations.
It is interesting that
these objects from physics are
so closely related to variations of Hodge structure, the
analytic incarnation of the idea of motives. This suggests a relationship between physics and motives which might be philosophically compelling.

\section{The twistor space for $X=\pp ^1 - \{ 0 , \infty \} $}
\label{sec-p1}

We would now like to mimic the Deligne-Hitchin construction for a quasiprojective curve.
For simplicity of calculation, let us take the easiest case which is $X:= \pp ^1$ and $D:= \{ 0 , \infty \} $. 
Let $U:=X-D$ and fix $x= 1$ as basepoint in $X$ or $U$.
Then $\pi _1(U, x)\cong \zz ^{\perp}$. A choice of $q=\sqrt{-1}\in Q\subset \cc$ yields a choice of generator $\gamma _0(q)\in \pi _1(U, x)$ going once around the origin,
counterclockwise if $1$ is pictured to the right of the origin and $q$ is pictured above the origin. 
Changing the choice of $q$ changes the generator to its inverse, which is why we get $\zz ^{\perp}$ rather than $\zz$. For the local monodromy transformations,
this yields
$$
\pi _1(U,x) ^{\perp} \cong \zz
$$
with a distinguished generator denoted $\gamma _0$.

Let $z$ denote the standard coordinate on $X$.
A logarithmic $\lambda$-connection on the trivial bundle $E:= \Oo _X$ is of the form
$$
\nabla = \lambda d + \alpha \frac{dz}{z}.
$$
In particular, we can write
$$
M_{\rm Hod}(X,\log D) =  \aaa ^1\times \aaa ^1 = \{ (\lambda , \alpha  )\} .
$$
The first coordinate is the parameter $\lambda$ and the second, the residue parameter $\alpha$.

For $\lambda \neq 0$ a point $(\lambda , \alpha )$ corresponds to the $1$-connection $d+ \lambda ^{-1}\alpha (dz/z)$.
Let $\rho : \pi _1(U,x)\rightarrow \Gm$ be its monodromy representation. 

The local monodromy transformation at the origin (see \S \ref{sub-localmonodromy} above) is 
$$
\rho ^{\perp} (\gamma _0) = {\rm exp}^{\perp}(\alpha /\lambda ) = (\cos (2\pi \alpha /\lambda ), \sin (2\pi \alpha /\lambda ))\in \Gm ^{\perp}.
$$
In order to write the global monodromy, note that any loop 
$\gamma \in \pi _1(U,x)$ can be expressed as a unique function 
$$
\gamma : [0,1]\rightarrow U
$$
such that $|\gamma (t)|=1$ and $\gamma$ proceeds at a uniform speed i.e. $\left| \frac{d\gamma}{dt} \right|$ is constant. 
The function $\gamma$ is then real analytic and extends by analytic continuation to a unique map $\gamma : \cc \rightarrow U$. 
In usual terms choosing $i=\sqrt{-1}\in \cc$, the path $\gamma$ is written as $t\mapsto e^{2\pi i t}$ and this expression
is valid for any $t\in \cc$. 

The global monodromy of our differential equation $d+ \lambda ^{-1}\alpha (dz/z)$ can now be expressed by the formula
$$
\rho (\gamma ) = \gamma (\alpha /\lambda ).
$$

Next, note that $\overline{X} = \pp ^1 - \{ 0,\infty \}$ too, and $\overline{x} = 1$ still, so 
we can write $(\overline{X},\overline{D},\overline{x}) = (X,D,x)$. 
In terms of this equality, $\varphi$ is just the geometric operation of complex conjugation on $U^{\rm top}$. 
Hence,
for any $\gamma \in \pi _1(U,x)$, the complex conjugation map $\varphi$ takes $\gamma$ to the loop 
$$
\varphi (\gamma ) = \overline{\gamma} = \gamma ^{-1}.
$$

\subsection{Computation of the Deligne glueing}
\label{sub-computation}
We also have 
$$
M_{\rm Hod}(\overline{X},\overline{D}) = \aaa ^1\times \aaa ^1 = \{ (\lambda , \alpha )\} 
$$
via the identification between $X$ and $\overline{X}$. In order to compute the Deligne glueing map
${\bf d}$, suppose we start with a point $(\lambda , a)$ in $M_{\rm Hod}(X,\log D)$. This corresponds to 
a monodromy representation 
$\gamma \mapsto \gamma (\alpha /\lambda ) $
as explained above. 
The fact that $\varphi$ interchanges $\gamma$
and $\gamma ^{-1}$ means that, after re-identifying $\overline{X}$ with $X$, the image of this representation
by $\varphi^{\ast}$ is 
$$
\gamma \mapsto \gamma ^{-1}(\alpha /\lambda ).
$$
There is a unique way to lift this to a logarithmic connection, if we want to send the point $\alpha=0$ to the point $\alpha=0$
and keep everything continuous: it is the connection $\Phi = d-\lambda ^{-1}\alpha  (dz/z)$. Finally, in the prescription
for the Deligne glueing we set $\mu := \lambda ^{-1}$ and transform this to a $\mu$-connection $\mu \Phi$.
This yields the point 
$$
{\bf d}(\lambda , \alpha ) = (\mu , \beta ) = (\lambda ^{-1}, -\lambda ^{-2}\alpha ). 
$$
We can now glue the two charts to get the Deligne-Hodge twistor space:
$$
M_{\rm DH}(X,\log D) := M_{\rm Hod}(X,D) \sqcup ^{{\bf d}} M_{\rm Hod}(\overline{X},\log \overline{D}).
$$

\subsection{The weight two property}
\label{sub-weighttwo}
Notice that the glueing map is linear in $\alpha$, so in this case the result is a vector bundle over
$\pp ^1$, in fact it is clearly the bundle $\Oo _{\pp ^1}(2)$ with glueing function $-\lambda ^{-2}$. 
The minus sign will have an effect on the antilinear involution below. 

Suppose $\lambda \mapsto P(\lambda )$ is a polynomial considered as a section of $M_{\rm Hod}(X,\log D)$.
Then its graph is the set of points $(\lambda , P(\lambda ))$ and these correspond to  points
of the form $(\lambda ^{-1}, -\lambda ^{-2}P(\lambda ))$ in $M_{\rm Hod}(\overline{X},\overline{D})$ for 
$\lambda$ invertible. Taking the closure over $\mu = \lambda ^{-1}\rightarrow 0$ yields the
set of points of the form $(\mu , -\mu ^2P(\mu ^{-1}))$. This is a holomorphic section in the $\mu$ chart
if and only if $P$ is a polynomial of degree $\leq 2$. This is one way to see that
$M_{\rm DH}(X,\log D) \cong \Oo _{\pp ^1}(2)$.
The global sections are those which are, in the standard
chart $M_{\rm Hod}(X,\log D)$, polynomials of degree $\leq 2$. 

The main point of the title of this paper is that, since this bundle has slope $2$, it corresponds in some sense
to a Hodge structure of weight $2$. That contrasts with the normal bundle of a preferred section in the compact case
(\S \ref{sec-preferred-twistor}), which has slope $1$.  The weight two behavior is to be expected in the present situation,
by analogy with the usual mixed Hodge theory of open varieties, where $H^1(\pp ^1- \{ 0, \infty \} )$ has a pure Hodge structure of
weight two. 

In the present case, the Deligne-Hitchin space $M_{\rm DH}(X,\log D)$ will again have an involution
$\sigma$ to be calculated below. Given that it is a line bundle of slope $2$,  the space of sections has dimension $3$:
$$
\Gamma (\pp ^1, M_{\rm DH}(X,\log D)) \cong \Gamma (\pp ^1, \Oo _{\pp ^1}(2))\cong \cc ^3,
$$
and the $\sigma$-invariant sections are a real form
$$
\Gamma (\pp ^1, M_{\rm DH}(X,\log D))^{\sigma} \cong \rr ^3.
$$
The map from here to any one of the fibers over $\lambda \in \pp ^1$ will be surjective but have a real one-dimensional 
kernel. It turns out that this real kernel corresponds to the real parameter involved in a parabolic structure,
even though we have seen the existence of this additional real parameter without refering {\em a priori} to
the notion of parabolic structure. 

\subsection{The antipodal involution}
\label{sub-antipodal}
We  now calculate explicitly the involution $\sigma$. Recall that it is a product of the
three involutions $C$, $D$ and $N$. The duality involution is trivial on the underlying bundles
because we are using the trivial bundle: $E^{\ast} = \Oo ^{\ast} = \Oo = E$. 
The connection on $E\otimes E^{\ast}$ should be trivial so we see that 
for $\nabla = d + \alpha (dz/z)$ the dual connection is
$\nabla ^{\ast} = d - \alpha  (dz/z)$. Thus
$$
D(\lambda , \alpha  ) = (\lambda , -\alpha ).
$$
Similarly, by definition 
$$
N(\lambda , \alpha ) = (-\lambda , -\alpha ).
$$
Putting these together gives $DN(\lambda , \alpha ) = (-\lambda , \alpha )$. These are  expressed within a single
chart $M_{\rm Hod}(X,\log D)$. On the other hand, the involution $C$ goes from 
the chart $M_{\rm Hod}(X,\log D)$ to the chart $M_{\rm Hod}(\overline{X},\overline{D})$, and with
respect to these charts it is given by
$$
C(\lambda , \alpha ) = (\overline{\lambda}, \overline{\alpha }).
$$
For $\lambda$ invertible we would like to put this back in the original chart. Recall that a point 
of the form $(\mu , \beta )$ in the chart $M_{\rm Hod}(\overline{X},\overline{D})$ corresponds to
$(\mu ^{-1}, -\mu ^{-2}\beta )$ in the chart $M_{\rm Hod}(X,\log D)$. Thus, within the same chart 
$M_{\rm Hod}(X,\log D)$ and for $\lambda$ invertible, the involution $C$ can be expressed as
$$
C(\lambda , \alpha ) = (\overline{\lambda}^{\, -1}, -\overline{\lambda}^{\, -2}\overline{\alpha }).
$$
Putting these together gives our expression for $\sigma = CDN$ again within the original 
chart and for $\lambda$ invertible:
$$
\sigma (\lambda , \alpha ) = (-\overline{\lambda}^{\, -1}, -\overline{\lambda}^{\, -2}\overline{\alpha }).
$$

We would now like to calculate which are the $\sigma$-invariant sections. Recall from \S \ref{sub-computation}
that a global section of $M_{\rm DH}(X,\log D)$ is, in the first chart, a polynomial $P$ of order $\leq 2$. Thus
we can write our section as
$$
P:\lambda \mapsto (\lambda , a_0 + a_1\lambda + a_2\lambda ^2).
$$
Its graph is the set of image points. The transformed section $P^{\sigma}$ has graph which is the closure of
the set of points of the form 
$$
\sigma P(\lambda ) = \left(
-\overline{\lambda}^{\, -1}, -\overline{\lambda}^{\, -2}\overline{(a_0 + a_1\lambda + a_2\lambda ^2)} 
\right)
\\
= \left( -\overline{\lambda}^{\, -1}, -\overline{a_2} - \overline{a_1}\overline{\lambda}^{\, -1}
-\overline{a_0}\overline{\lambda}^{\, -2} \right) .
$$
Substituting in the above expression $-\overline{\lambda}^{\, -1}$ by $t$, the graph becomes the set of points
of the form 
$$
(t,-\overline{a_2} + \overline{a_1}t - \overline{a_0}t^2).
$$
This is the graph of the polynomial $t\mapsto -\overline{a_2} + \overline{a_1}t - \overline{a_0}t^2$.
Thus, writing our polynomials generically with a variable $u$ we can write 
\begin{equation}
\label{eq-sigma}
\left( a_0 + a_1u + a_2 u^2 \right) ^{\sigma} = 
\left( -\overline{a_2} + \overline{a_1}u - \overline{a_0}u^2 \right) .
\end{equation}

The $\sigma$-invariant sections are the polynomials with $a_2 = - \overline{a_0}$ and
$a_1 = \overline{a_1}$. In other words, an invariant section corresponds to a pair 
$(a,\alpha )\in \rr \times \cc \cong \rr ^3$
with the formula 
\begin{equation}
\label{like-mochizuki}
P(\lambda = \psi (a,\alpha )(\lambda ) := \alpha - a \lambda - \overline{\alpha} \lambda ^2.
\end{equation}
The reader will recognize this as the formula from Mochizuki \cite[2.1.7, p. 25]{Mochizuki}.

\subsection{Gauge transformations}
\label{sub-gauge}
A logarithmic connection is not uniquely determined by its monodromy representation. This situation is
complicated in higher rank, but is understood easily in our case from the fact that the monodromy
associated to a $\lambda$-connection $(\lambda , \alpha )$ is ${\rm exp}^{\perp}( \alpha /\lambda )$, or with a
choice of $i=\sqrt{-1}$ it is $e^{2\pi i\alpha /\lambda }$. If we replace $\alpha $ by
$\alpha -k \lambda$ for $k\in \zz$ we get the same monodromy representation. This process may be viewed as
making the meromorphic gauge transformation $v\mapsto z^{-k} v$ on the bundle $E= \Oo _X$, or equivalently changing the
bundle $E$ to $E(kD_0 - kD_{\infty})$ where $D_0=\{ 0\}$ and $D_{\infty} = \{ \infty \}$. The zeros or poles
of the gauge transformation
at points of $D$ change the residues of the $\lambda$-connection by integer multiples of $\lambda$. 
The sign here and in the definition \eqref{like-mochizuki} of $\psi (a,\alpha )$ comes from the formula \eqref{residueformula}.

In terms of our space $M_{\rm Hod}(X,\log D)$ we have an action of $\zz$ obtained by letting $k\in \zz$
act as $(\lambda , \alpha )\mapsto (\lambda , \alpha - \lambda k)$. This action extends to the other chart, 
and gives an action of $\zz$ on $M_{\rm Hod}(X,\log D)$. Over $\Gm \subset \pp ^1$ the
action is discrete and the quotient is $\Gm \times \cc / \zz = \Gm \times M_B(U)$. Note that the
action degenerates to a trivial action on the fibers over $0$ and $\infty$, the quotients of these actions
are trivial $B\zz$-gerbs over $M_{\rm Dol}(X,\log D)$ and $M_{\rm Dol}(\overline{X},\log \overline{D})$. 

The $\zz$ action respects the involution $\sigma$ so it gives an action on the space of $\sigma$-invariant
sections. In terms of the previous formulae, this clearly acts on the degree $1$ term in the polynomials,
or in terms of the coordinates $(a,\alpha )\in \rr \times \cc$ it acts with generator $(1,0)$. 
Thus, we can write
$$
\frac{\Gamma (\pp ^1, M_{\rm DH}(X,\log D))^{\sigma}}{\zz} \cong \frac{\rr \times \cc }{(1,0)\cdot \zz}.
$$
Given a harmonic bundle on $U$ we
get a $\sigma$-invariant preferred section, $a$ is the parabolic weight of the Higgs bundle and $\alpha$ is the
residue of the Higgs field, see Theorem \ref{harmoniccompose} below. 

We recover in this way the space of possible residues of parabolic $\lambda$-connections, with the
residue of the $\lambda$-connection being given by the previous formula \eqref{like-mochizuki}. Note that the action of $\zz$ is by
meromorphic gauge transformations, so moving the parabolic index once around the circle induces an
elementary transformation of the bundle.

\section{The Tate twistor structure}
\label{sec-tatetwistor}

Before getting to the general rank one case, we investigate the structures associated to the bundle $\Oo _{\pp ^1}(2)$
which occurs above. Recall that $T\pp ^1 \cong \Oo _{\pp ^1}(2)$. Furthermore, the sign $-\lambda ^2$ which occurs in the
glueing function for residues of points in $M_{\rm DH}(X,\log D)$ is the same as in the glueing function
for $T\pp ^1$.  The Tate motive is a pure Hodge structure of type 
$(1,1)$ hence weight $2$. In view of this, we define the {\em additive Tate twistor structure} to be the bundle 
$$
T(1):= T\pp ^1 ,
$$
with its natural antilinear involution
$$
\sigma _{T(1)} := \sigma _{T\pp ^1}.
$$ 
See also Mochizuki \cite[\S 3.10.2]{Mochizuki} and Sabbah \cite[\S 2.1.3]{Sabbah}. 

On the other hand, we define the {\em logarithmic Tate twistor structure}
to be the same bundle $T(1,\log ):= T\pp ^1$, but here
the antilinear involution should have a sign change with respect to the natural one on $T\pp ^1$,
$$
\sigma _{T(1,\log )} := - \sigma _{T\pp ^1}.
$$
The reason for the difference between these two will be explained below. 

\subsection{Integer subgroups}
\label{sub-tate-integer}
The action of $\Gm$ on $\pp ^1$ preserving $0$ and 
$\infty$ gives an action of $\Gm$ on $T\pp ^1$. The derivative of this action is a section of the tangent bundle,
defining the integer subgroup $\zz \times \pp ^1 \subset T\pp ^1$. This section is antipreserved by the standard involution
$\sigma _{T\pp ^1}$. 
For the additive Tate twistor structure, we therefore use the imaginary
version of this integer subgroup, the set of integer multiples of $\pm 2\pi \sqrt{-1}$ denoted
$$
\zz (1)\cong \zz ^{\perp} \subset \Gamma (\pp ^1,T(1))^{\sigma}.
$$
For the logarithmic Tate twistor structure, we use the integer subgroup itself
$$
\zz (1,\log ):= \zz \subset \Gamma (\pp ^1,T(1,\log ))^{\sigma}.
$$
Take for generator of $\zz (1,\log )$ the vector field $-\lambda \frac{\partial}{\partial \lambda}$ in the standard chart $\aaa ^1$. 
This minus sign comes from the formula \eqref{residueformula}, see also \S \ref{sub-gauge} above. If we go into the other
chart then the generator changes sign once again. See the paragraph above equation \eqref{dhgauge} for a reflection of this
sign change in the action of the gauge group on the complex conjugate chart. 

The sign change for $\sigma$ on $T(1,\log )$ guarantees that the integer sections are preserved by $\sigma$. 

The action of $\Gm$ on the additive Tate twistor structure $T(1)$ 
corresponds to the usual Tate Hodge structure of type $(1,1)$ with its integral subgroup $\zz (1)$. 

Define the {\em multiplicative Tate twistor structure} to be 
\begin{equation}
\label{gm1}
\Gm (1) := T(1,\log ) / \zz (1,\log ).
\end{equation}
The fiber over $\lambda = 1$, the de Rham version, is naturally identified with $\cc / \zz$. The exponential map gives the isomorphism
$$
{\rm exp}^{\perp} : \cc / \zz \stackrel{\cong}{\rightarrow} \Gm ^{\perp}.
$$
This explains why we needed to change the sign of $\sigma$ for the logarithmic Tate structure: the exponential map interchanges
objects before and after $(\; )^{\perp}$. Thus, to get a Deligne-Tate type twist on the multiplicative group corresponding
to the local monodromy operator of a connection, we need to undo this twist which occurs naturally in $T(1)=T\pp ^1$. 

Over $\lambda = 0$ and $\lambda = \infty$, the quotient defining $\Gm (1)$ is to be taken in the stack sense. Hence,
$$
\Gm (1)_{\rm Dol} = \cc \times B\zz .
$$

\subsection{The antipodal involution in the additive case}
\label{sub-tangent-antipodal}

Start by  computing the natural antipodal involution $\sigma _{T\pp ^1}$ of
the tangent bundle or equivalently the additive Tate structure $T(1)$. For this subsection, the notation 
$\sigma$ represents $\sigma _{T\pp ^1}= \sigma _{T(1)}$.

The vector field $\lambda \frac{\partial}{\partial \lambda}$ goes radially outward from $0$ towards
$\infty$. Up to a scalar it is the unique vector field with zeros at $0$ and $\infty$, and this property is preserved by $\sigma$. 
Geometrically we see that the antipodal involution changes the sign of this radial vector field. Acting on this section considered as
a section of $T\pp ^1$ we get
$$
\sigma ^{\ast}\lambda \frac{\partial}{\partial \lambda} = -\lambda \frac{\partial}{\partial \lambda} .
$$
Similar geometric consideration shows that $\sigma$ interchanges the vector fields $\frac{\partial}{\partial \lambda}$ and 
$\lambda ^2\frac{\partial}{\partial \lambda}$, this time with no sign change. Thus
$$
\sigma ^{\ast}\frac{\partial}{\partial \lambda} = \lambda ^2\frac{\partial}{\partial \lambda} 
$$
and vice-versa. 

A point of the total space of the tangent bundle, in the first standard chart,  has the form $ \left( \lambda , v\frac{\partial}{\partial \lambda} \right)$.
We know that $\sigma$ acts on the first coordinate by sending $\lambda$ to $-\overline{\lambda} ^{\, -1}$.  Thinking of the above sections as
corresponding to their graphs which are sets of points, and noting that $\sigma$ is antilinear in the coordinate $v$,
the formula for $\sigma$ on points of the total bundle is 
$$
\sigma \left( \lambda , b\frac{\partial}{\partial \lambda} \right) = 
\left( -\overline{\lambda} ^{\, -1} , \overline{\lambda} ^{\, -2}\overline{v} \frac{\partial}{\partial \lambda} \right) .
$$

\subsection{The antipodal involution in the logarithmic or multiplicative case}
\label{sub-tate-antipodal}

Recall that $\sigma _{T(1,\log )}= - \sigma _{T\pp ^1}$, with the minus sign acting only in the bundle fiber direction. Hence, for 
$\sigma = \sigma _{T(1,\log )}$ the formulae from the previous section become
$$
\sigma \left( \lambda , v\frac{\partial}{\partial \lambda} \right) = 
\left( -\overline{\lambda} ^{\, -1} , -\overline{\lambda} ^{\, -2}\overline{v} \frac{\partial}{\partial \lambda} \right) ,
$$
and
$$
\sigma ^{\ast}(u+v\lambda + w\lambda ^2)\frac{\partial}{\partial \lambda}
= (-\overline{w}+ \overline{v}\lambda -\overline{u}\lambda ^2)
\frac{\partial}{\partial \lambda} .
$$
This fits with the formula \eqref{like-mochizuki}: a $\sigma$-invariant section has the form
$$
\psi (a,\alpha ) = \lambda \mapsto (\alpha - a \lambda - \overline{\alpha} \lambda ^2)\frac{\partial}{\partial \lambda} 
$$
with $\alpha \in \cc$ and $a\in\rr$. 

On the quotient \eqref{gm1} $\Gm (1) = T(1,\log )/\zz (1,\log ) $ we get the involution $\sigma _{\Gm (1)}$.

\subsection{The space of invariant sections}
\label{sub-invariant}
The space of $\sigma$-invariant sections of $T(1,\log )$ inherits some canonical structure. 
For any point $p\in \pp ^1$ we get a distinguished $\sigma$-invariant direction in 
$\Gamma (\pp ^1,T(1,\log ))$: the sections having simple zeros at $p$ and $\sigma (p)$. 

This space $\Gamma (\pp ^1, T(1,\log )(-p-\sigma (p)))^{\sigma}$
is naturally identified with the Lie algebra of the one parameter group of $\sigma$-antipreserving
homotheties of $\pp ^1$ which fix $p$ and $\sigma (p)$. 
One must say ``antipreserving'' here because of the sign change in $\sigma _{T(1,\log )}$. 

The elements of this group are radial homotheties; the group is isomorphic to $\Gm (\rr )$ and its Lie algebra is isomorphic to $\rr$. 
In particular, there is a distinguished generator which is the vector field going inward from  $\sigma (p)$ towards $p$ attaining speed $1$
at the equator between the two fixed points.  Equivalently, we can require that the expansion factor at $p$ be equal to $-1$. 
This is normalized so that when $p=0$ it gives the generator of 
$\zz (1,\log )$.
The expansion factor of a vector field with a zero, is a well-defined complex number: it is dual to the residue, and
can be defined as the value of the vector field on the differential form $\frac{dz}{z}$. 

Let $\nu ^p\in \Gamma (\pp ^1, T(1,\log )(-p-\sigma (p)))^{\sigma}$ denote the generator normalized to have expansion factor $-1$ at $p$. 
We get a canonical isomorphism 
$$
 \Gamma (\pp ^1, T(1,\log )(-p-\sigma (p)))^{\sigma} \cong \rr , \;\;\; \nu _p \mapsto 1 .
$$
Evaluating at $p$ gives a map ${\rm ev}_p$ from the space of sections to the fiber $T(1,\log )_p$. 

\begin{lemma}
\label{exactatp}
These maps fit into a canonical exact sequence depending on $p\in\pp ^1$,
$$
0\rightarrow \rr \stackrel{\nu_p}{\rightarrow} \Gamma (\pp ^1,T(1,\log ))^{\sigma} \stackrel{{\rm ev}_p}{\rightarrow} T(1,\log )_p \rightarrow 0.  
$$
\end{lemma}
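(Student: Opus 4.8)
The plan is to verify exactness at each of the three spots in the sequence
$$
0\rightarrow \rr \stackrel{\nu_p}{\rightarrow} \Gamma (\pp ^1,T(1,\log ))^{\sigma} \stackrel{{\rm ev}_p}{\rightarrow} T(1,\log )_p \rightarrow 0,
$$
reducing to the case $p=0$ by using the transitive action of the $\sigma$-compatible automorphism group of $(\pp^1, T(1,\log))$ on points of $\pp^1$ (the radial homotheties together with the antipodal flip move any $p$ to $0$, carrying $\nu^p$ to $\nu^0$ and ${\rm ev}_p$ to ${\rm ev}_0$). So it suffices to treat $p=0$, where everything is explicit from \S\ref{sub-tate-antipodal}: a $\sigma$-invariant section is $\psi(a,\alpha)(\lambda) = (\alpha - a\lambda - \overline{\alpha}\lambda^2)\frac{\partial}{\partial\lambda}$ with $a\in\rr$, $\alpha\in\cc$, and $\nu^0$ is the generator $-\lambda\frac{\partial}{\partial\lambda}$, i.e. $\psi(1,0)$.

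First, injectivity of $\nu_p$: the map $\rr \to \Gamma(\pp^1,T(1,\log))^{\sigma}$ sends $1$ to the nonzero section $\nu^p$, and since the target is a real vector space and $\nu^p\neq 0$, the $\rr$-linear map $t\mapsto t\nu^p$ is injective. Second, surjectivity of ${\rm ev}_p$: at $p=0$ the fiber $T(1,\log)_0$ is the complex line spanned by $\lambda\frac{\partial}{\partial\lambda}|_0$, and evaluating $\psi(a,\alpha)$ at $\lambda=0$... here one must be slightly careful, because the coordinate vector field $\frac{\partial}{\partial\lambda}$ does not vanish at $0$ whereas $\lambda\frac{\partial}{\partial\lambda}$ does; the fiber $T(1,\log)_0$ is spanned by $\frac{\partial}{\partial\lambda}|_0$, and ${\rm ev}_0(\psi(a,\alpha)) = \alpha\frac{\partial}{\partial\lambda}|_0$, which ranges over all of $T(1,\log)_0\cong\cc$ as $\alpha$ varies over $\cc$. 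Hence ${\rm ev}_0$ is surjective. Third, exactness in the middle: $\psi(a,\alpha)$ lies in $\ker({\rm ev}_0)$ iff $\alpha=0$ iff $\psi(a,\alpha) = a\cdot\psi(1,0) = a\,\nu^0$, which is precisely the image of $\nu_0$. This gives $\ker({\rm ev}_0) = {\rm im}(\nu_0)$.

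A dimension count corroborates this: $\Gamma(\pp^1,T(1,\log))^{\sigma}\cong\rr^3$ by \S\ref{sub-weighttwo}, the fiber is $\cc\cong\rr^2$, and $\rr$ has dimension $1$, so $1 - 3 + 2 = 0$ as required for a short exact sequence; combined with the injectivity at the left and surjectivity at the right established above, exactness in the middle follows automatically even without the explicit kernel computation. Finally I would note that the identifications are canonical in $p$: the generator $\nu^p$ is canonically pinned down by the normalization ``expansion factor $-1$ at $p$'' as in the paragraph preceding the lemma, and ${\rm ev}_p$ is literally evaluation, so the whole sequence depends naturally on $p\in\pp^1$ with no auxiliary choices.

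The main obstacle is purely bookkeeping rather than conceptual: one must keep straight the distinction between the fiber $T(1,\log)_p$ (on which ${\rm ev}_p$ lands, a one-dimensional complex vector space, so two real dimensions) and the one-real-dimensional space $\Gamma(\pp^1,T(1,\log)(-p-\sigma(p)))^{\sigma}\cong\rr$ of sections vanishing at $p$ and $\sigma(p)$ — it is the latter that is the source of $\nu_p$, and conflating the two would break the count. The sign convention in $\sigma_{T(1,\log)} = -\sigma_{T\pp^1}$ must also be carried through so that $\nu^p$ is genuinely $\sigma$-invariant (not merely $\sigma$-antiinvariant); this is exactly the point recorded in \S\ref{sub-tate-integer}, and once invoked the argument closes without further friction.
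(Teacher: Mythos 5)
Your argument is essentially the paper's: identify the kernel of ${\rm ev}_p$ with $\rr\cdot\nu_p$ explicitly, and get the rest by a dimension count ($1-3+2=0$ together with ${\rm ev}_p(\nu_p)=0$). The verifications at $p=0$ are all correct, including the self-correction that $T(1,\log)_0$ is spanned by $\frac{\partial}{\partial\lambda}|_0$ so that ${\rm ev}_0(\psi(a,\alpha))=\alpha\,\frac{\partial}{\partial\lambda}|_0$, and the identification $\nu_0=\psi(1,0)=-\lambda\frac{\partial}{\partial\lambda}$.

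The one step that does not work as written is the reduction to $p=0$: the parenthetical mechanism ``radial homotheties together with the antipodal flip'' does not produce $\sigma$-intertwining holomorphic automorphisms. A radial homothety $z\mapsto tz$ with $t\neq 1$ satisfies $f\circ\sigma\neq\sigma\circ f$ (the paper calls these maps $\sigma$-\emph{anti}preserving in \S\ref{sub-invariant}), and the antipodal flip is antiholomorphic, so neither acts on $\Gamma(\pp^1,T(1,\log))^{\sigma}$ in the way your reduction needs. The claim itself is salvageable: the group that does act and is transitive on $\pp^1$ is the $SO(3)$ of $\sigma$-intertwining automorphisms from Lemma \ref{so3-innerprod}, which carries $\nu_p$ to $\nu_{f(p)}$ and intertwines the evaluation maps. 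Alternatively, the reduction is unnecessary: the same explicit computation goes through at arbitrary $p\in\aaa^1$ using \eqref{for-reseval}, since $\ker({\rm ev}_p)$ is the solution set of $\alpha-ap-\overline{\alpha}p^2=0$, shown in \S\ref{sub-onedim} to be exactly $\rr\cdot\nu_p$; your dimension count then gives surjectivity of ${\rm ev}_p$, which is precisely how the paper concludes. With either repair the proof is complete.
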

\begin{proof}
It is exact in the middle because $\rr \cdot \nu _p$ is exactly the space of sections vanishing at $p$.
Exactness on the left and right follow by dimension count. 
\end{proof}

\subsection{The residue evaluation}
\label{sub-reseval}
The standard translation action of ${\mathbb G}_a$ on $\pp ^1$ fixing the point $\infty$ gives a trivialization
$$
T(1,\log )|_{\aaa ^1}\cong \Oo .
$$
For any point $p\in \aaa ^1$,
let ${\rm res}_p$ denote the composition of this trivialization at $p$, with the evaluation map ${\rm ev}_p$. 
Then the exact sequence \ref{exactatp} can be written
\begin{equation}
\label{eq-exactatp}
0\rightarrow \rr \stackrel{\nu_p}{\rightarrow} \Gamma (\pp ^1,T(1,\log ))^{\sigma} \stackrel{{\rm res}_p}{\rightarrow} \cc \rightarrow 0.
\end{equation}
As calculated above, the $\sigma$-invariant 
sections are identified with the polynomials of the form 
$$
\psi (a,\alpha )= \left( \alpha - a \lambda - \overline{\alpha} \lambda ^2\right) \frac{\partial}{\partial \lambda}
$$
with $a\in \rr$ and $\alpha \in \cc$. 
The residue evaluation at $p$ is 
\begin{equation}
\label{for-reseval}
{\rm res}_p(\psi (a,\alpha )) = \alpha - ap -\overline{\alpha}p^2.
\end{equation}
Notice that ${\rm res}_p(\psi (a_p,\alpha _p))=0$, since $\nu _p$ is a section vanishing at $p$.

\subsection{The generator $\nu _p$}
\label{sub-onedim}
Suppose given a point $p\in \aaa ^1$. In coordinates, $\sigma (p) = -\overline{p}^{\, -1}$. 
The
line of $\sigma$-invariant sections which vanish to first order at $p$  is 
$$
\alpha - a p - \overline{\alpha} p^2  = 0.
$$
Vanishing at $\sigma (p)$ is a consequence, because a $\sigma$-invariant
section vanishing at $p$ also has to vanish at $\sigma (p)$. 
Recall that $a\in \rr$. We get a real one-dimensional space of solutions generated for example by
$(a,\alpha )$ with
$$
a = 1- |p|^2, \;\;\; \alpha  = p.
$$ 
Then 
$$
\psi (a,\alpha ) (\lambda )= \left( p + (|p|^2-1)\lambda - \overline{p}\lambda ^2\right)  \frac{\partial}{\partial \lambda}.
$$
Let us calculate the expansion factor at $p$ of the vector field corresponding
to our section $\psi (a_1,\alpha _1)$. For this, express the vector field in the form 
$$
\psi (a,\alpha ) = \eta \cdot (\lambda -p)\frac{\partial}{\partial\lambda} + \ldots
$$
where the $\ldots $ signify higher order terms at $p$. The constant $\eta$ is the expansion factor. 
We have 
$$
\eta = \frac{d}{d\lambda} \left.  \left(  p + (|p|^2-1)\lambda - \overline{p}\lambda ^2 \right) \right| _{\lambda = p} 
$$
$$
= (|p|^2 -1 - 2\overline{p}\lambda )|_{\lambda = p} = -(1+|p|^2)
$$
We can normalize to get the canonical generator $\nu _p = \psi (a_p,\alpha _p)$ whose expansion factor is $-1$:
\begin{equation}
\label{generator-one}
a_p = \frac{1-|p|^2}{1+|p|^2}, \;\;\; \alpha _p = \frac{p}{1+|p|^2}.
\end{equation}
For $p=1$ it is the generator $(1,0)$ of $\zz (1,\log )$.

\subsection{A natural inner product}
\label{sub-innerprod}
For all the above vectors \eqref{generator-one}, we have
$a_p^2 + 4|\alpha _p|^2 = 1$. 
The equation $a^2 + 4|\alpha |^2 = 1$ defines an $S^2\subset \rr \times \cc$,
and the function $p\mapsto (a_p,\alpha _p)$ provides an isomorphism between $\pp ^1$ and this $S^2$. 
This is the unit sphere for the inner product
\begin{equation}
\label{scalarprod}
(a,\alpha )\cdot (b,\beta):= ab + 2( \alpha \overline{\beta} + \overline{\alpha}\beta ).
\end{equation}

\begin{lemma}
\label{so3-innerprod}
The group $SO(3)$ acts naturally as the group of $\sigma$-intertwining complex automorphisms of $\pp ^1$,
so it acts on the space of sections $\Gamma (\pp ^1,T(1,\log ))^{\sigma}$. The above inner product \eqref{scalarprod}
is the $SO(3)$-invariant one, unique up to a scalar.
\end{lemma}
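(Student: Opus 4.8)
The plan is to pin down the group and its action first, then transport the round-sphere picture of \S\ref{sub-innerprod} along an equivariant map, and finally invoke irreducibility for the uniqueness clause.

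First I would recall the classical identification: the holomorphic automorphisms of $\pp ^1$ commuting with the antipodal involution $\lambda \mapsto -\overline{\lambda}^{\,-1}$ form the group $PSU(2)\cong SO(3)$, acting on the Riemann sphere $\pp ^1\cong S^2$ by rotations, in particular transitively. Such an automorphism $g$ acts on $T\pp ^1 = T(1,\log )$ by pushforward of vector fields; since $g$ intertwines $\sigma$ on the base, the induced map on $T\pp ^1$ commutes with $\sigma _{T\pp ^1}$, and being fibrewise linear it commutes with multiplication by $-1$ in the fibres, hence with $\sigma _{T(1,\log )} = -\sigma _{T\pp ^1}$. Therefore it preserves $\Gamma (\pp ^1, T(1,\log ))^{\sigma}$, which gives the asserted $SO(3)$-action on this real $3$-dimensional space.

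Second I would check that the embedding $p\mapsto \nu _p$ of \S\ref{sub-onedim}--\S\ref{sub-innerprod} is $SO(3)$-equivariant, that is $g_{\ast}\nu _p = \nu _{g(p)}$. Indeed $\nu _p$ is characterized as the $\sigma$-invariant section vanishing to first order at $p$ and $\sigma (p)$, normalized so that its expansion factor at $p$ equals $-1$; since $g\sigma = \sigma g$ the section $g_{\ast}\nu _p$ is again $\sigma$-invariant and vanishes at $g(p)$ and $\sigma (g(p))$, and the expansion factor --- the value of the vector field on $\frac{dz}{z}$, an invariant of the germ of a vector field at a zero --- is unchanged under the biholomorphism $g$, so it is still $-1$ at $g(p)$. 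Hence $g_{\ast}\nu _p = \nu _{g(p)}$. By \S\ref{sub-innerprod} the image of $p\mapsto (a_p,\alpha _p)$ is exactly the quadric $\Sigma := \{ a^2 + 4|\alpha |^2 = 1\}$ inside $\Gamma (\pp ^1, T(1,\log ))^{\sigma}=\rr \times \cc$; by transitivity it is a single $SO(3)$-orbit, in particular an $SO(3)$-invariant subset. A linear automorphism of $\rr ^3$ preserving this ellipsoid preserves the positive definite quadratic form $q(a,\alpha )=a^2+4|\alpha |^2$ whose unit level set it is, hence preserves its polarization, which is precisely the inner product \eqref{scalarprod}. This proves that \eqref{scalarprod} is $SO(3)$-invariant.

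For uniqueness up to a scalar, I would first note that the representation on $\rr ^3$ is irreducible: an $SO(3)$-fixed vector $w$ would have constant $q$-pairing with every point of $\Sigma$ by equivariance, but since $-\Sigma = \Sigma$ this constant equals its own negative and hence is $0$, and $\Sigma$ spans $\rr ^3$, forcing $w=0$; as $SO(3)$ is connected this excludes invariant lines, and passing to orthogonal complements with respect to the invariant form $q$ excludes invariant planes. Given any other invariant inner product $B$, write $B(x,y)=q(Ax,y)$ with $A$ the unique $q$-self-adjoint operator realizing it; $A$ is $SO(3)$-equivariant, hence its real eigenspaces are invariant, so by irreducibility $A$ is a scalar and $B$ is a positive multiple of \eqref{scalarprod}. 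The step I expect to require the most care is the equivariance $g_{\ast}\nu _p = \nu _{g(p)}$, namely checking that the expansion-factor normalization and the $T(1,\log )$ sign conventions are genuinely respected by the $SO(3)$-action; everything after that is formal representation theory.
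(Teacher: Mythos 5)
Your proof is correct and rests on the same key observation as the paper's: $\nu _p$ is canonically attached to the pair $(p,\sigma )$, so any $\sigma$-intertwining automorphism $g$ satisfies $g_{\ast}\nu _p = \nu _{g(p)}$, hence preserves the sphere $\{ a^2+4|\alpha |^2=1\}$, hence preserves the form \eqref{scalarprod}. The only differences are in direction and in what is made explicit: the paper runs the argument the other way, using invariance of \eqref{scalarprod} plus determinant $1$ (from holomorphicity) to place the group of $\sigma$-intertwining automorphisms inside $SO(3)$, whereas you invoke the classical identification of that group with $PSU(2)\cong SO(3)$ at the outset and then verify invariance; and you fill in two points the paper leaves implicit, namely that the expansion-factor normalization is a biholomorphism invariant (so the equivariance $g_{\ast}\nu _p=\nu _{g(p)}$ genuinely respects the sign conventions of $T(1,\log )$), and the uniqueness-up-to-scalar clause, which you deduce from irreducibility of the $3$-dimensional representation together with a Schur-type argument for a self-adjoint intertwiner. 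Both additions are sound, and the uniqueness argument in particular covers a part of the statement that the paper's proof does not address explicitly.
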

\begin{proof}
Any $\sigma$-intertwining automorphism $f$ of $\pp ^1$ acts on $T(1,\log )$ because that bundle
is naturally defined as the tangent bundle with a twisted $\sigma$. Hence it acts on the space of $\sigma$-invariant
sections. The section $\nu _p$ is canonically defined depending on the point $p$ and the
involution $\sigma$. Hence $f_{\ast}$ takes $\nu _p$ to $\nu _{f(p)}$. It follows that the action of $f$ preserves the
sphere $S^2$ image of the map $\nu$. Therefore $f$ is in the orthogonal group $O(3)$ for this scalar product. It has
determinant $1$ because of holomorphicity. We get $f\in SO(3)$, the special orthogonal group for the scalar
product \eqref{scalarprod}.  
\end{proof}

\subsection{The parabolic weight function}
\label{sub-parabolicweight}

The space of $\sigma$-invariant sections of $T(1,\log )$ is an $\rr ^3$, and at any $p\in \pp ^1$
it is naturally an extension of $T(1,\log )_p$ by $\rr$ (Lemma \ref{exactatp}). The quotient $T(1,\log )_p$ represents the residue of a $\lambda$-connection.
The extra real parameter corresponds to the real parabolic weight of a parabolic structure. However, we need to discuss the
normalization of this identification splitting the exact sequence. 

We use the coordinates $a,\alpha$ for the set of invariant sections denoted $\psi (a,\alpha )$, giving an isomorphism between this space and $\rr \times \cc$.

The point $(1,0)\in \rr \times \cc$ is the generator of the subgroup $\zz (1, \log )$. For each $p$ we have the point
$\nu _p = \psi (a_p,\alpha _p)$ given by \eqref{generator-one}. The quotient of $\rr \times \cc$ by the line generated by 
$(a_p,\alpha _p)$, is the space of residues at $p$. 

We would like to define a {\em parabolic weight function} $\varpi _p : \rr \times \cc \rightarrow \rr$, depending on the point $p$,
such that $\varpi _p(1,0) = 1$ for compatibility with local gauge transformations; and $\varpi _p(a_p,\alpha _p) \neq 0$
so that the local residue map is an isomorphism between $\ker (\varpi _p)$ and $\cc$.

Using the inner product $(a,\alpha )\cdot (a',\alpha ') = aa' + 2( \alpha \overline{\alpha}' + \overline{\alpha}\alpha ')$,
the simplest thing to do is to  
let $\varpi _p$ be given by the inner product with the average of the two vectors $(1,0)$ and $(a_p,\alpha _p)$, then normalize to get
$\varpi _p(1,0)=1$. This is
\begin{eqnarray}
\label{for-paraweight}
\varpi _p(a,\alpha )& := &\frac{(a, \alpha )\cdot (1,0) + (a,\alpha )\cdot (a_p,\alpha _p)}{(1,0)\cdot (1,0) + (1,0)\cdot (a_p,\alpha _p)}
\nonumber \\
& = & \frac{(1+|p|^2)a + (1-|p|^2)a + 2 (\alpha \overline{p} + \overline{\alpha}p)}{(1+|p|^2) + (1-|p|^2)}
\nonumber \\
& =& a +  \alpha \overline{p} +\overline{\alpha}p .
\end{eqnarray}

The parabolic weight and residue functions are the same as Mochizuki's functions ${\mathfrak p}$ and ${\mathfrak e}$ of \cite[\S 2.1.7]{Mochizuki},
however we have preferred to motivate their introduction independently above. 

\begin{proposition}
\label{prop-parabolicweight}
For any point $p\in \aaa ^1$, the parabolic weight function and the residue give an isomorphism
$$
(\varpi _p, {\rm res}_p): \Gamma (\pp ^1,T(1,\log ))^{\sigma} \stackrel{\cong}{\rightarrow} \rr \times \cc .
$$
Let $\zz = \zz (1,\log )$ act on $\rr \times \cc$ with generator $(1,-p)$ in keeping with \eqref{residueformula}. 
Then the above isomorphism descends to the quotient to give
$$
(\varpi _p, {\rm res}_p): \frac{\Gamma (\pp ^1,T(1,\log ))^{\sigma}}{\zz (1,\log )} \stackrel{\cong}{\rightarrow} \frac{\rr \times \cc }{(1,-p)\zz}.
$$
In terms of the coordinates $(a, \alpha )$ given by the construction $\psi$, we have
$$
(\varpi _p, {\rm res}_p)(\psi (a,\alpha )) = (a +  \alpha \overline{p} +\overline{\alpha}p, \alpha - ap -\overline{\alpha}p^2).
$$
\end{proposition}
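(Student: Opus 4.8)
The plan is to verify the coordinate formula first and then read off the two claimed isomorphisms from it. First I would compute $\varpi_p(\psi(a,\alpha))$ directly from the definition \eqref{for-paraweight}: since $\varpi_p$ is, by construction, the inner product with the (normalized) average of $(1,0)$ and $(a_p,\alpha_p)$, substituting the explicit values \eqref{generator-one} for $(a_p,\alpha_p)$ and the inner product \eqref{scalarprod} gives, after the cancellation already displayed in \eqref{for-paraweight}, the value $a + \alpha\overline{p} + \overline{\alpha}p$. Next I would recall from \eqref{for-reseval} that ${\rm res}_p(\psi(a,\alpha)) = \alpha - ap - \overline{\alpha}p^2$; together these two computations are exactly the last displayed formula in the statement, so that part is immediate.

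The first isomorphism then follows by a linear algebra argument: regard $(\varpi_p,{\rm res}_p)$ as an $\rr$-linear map $\rr\times\cc\cong\rr^3 \to \rr\times\cc\cong\rr^3$ via the parametrization $\psi$. By Lemma \ref{exactatp} (or \eqref{eq-exactatp}) the map ${\rm res}_p$ alone is surjective with one-dimensional kernel $\rr\cdot\nu_p = \rr\cdot(a_p,\alpha_p)$, so to conclude that $(\varpi_p,{\rm res}_p)$ is an isomorphism it suffices to check that $\varpi_p$ is nonzero on that kernel, i.e. $\varpi_p(a_p,\alpha_p)\neq 0$. But $\varpi_p$ is normalized to equal the inner product with a \emph{positive} multiple of $\tfrac12\big((1,0)+(a_p,\alpha_p)\big)$, and $(a_p,\alpha_p)$ lies on the unit sphere $a^2+4|\alpha|^2=1$ for this positive-definite inner product; since $(1,0)$ is also on that sphere and the two points are distinct unless $p=1$, we get $(a_p,\alpha_p)\cdot(1,0)+(a_p,\alpha_p)\cdot(a_p,\alpha_p) = (a_p,\alpha_p)\cdot(1,0) + 1 > 0$ by Cauchy--Schwarz (equality only if the vectors coincide, the case $p=1$, where the value is $2$). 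Hence $\varpi_p(a_p,\alpha_p)>0$, the map is injective, and by equality of dimensions it is an isomorphism. The normalization $\varpi_p(1,0)=1$ is built into \eqref{for-paraweight}.

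For the descent to the quotient, recall from \S\ref{sub-gauge} (and \eqref{residueformula}) that the generator of $\zz=\zz(1,\log)$ acts on $\Gamma(\pp^1,T(1,\log))^\sigma$ by $\psi(a,\alpha)\mapsto\psi(a-\text{(shift)},\ldots)$; more precisely, in the $(a,\alpha)$ coordinates the meromorphic gauge transformation along $D_0=\{0\}$ translates by $(1,0)$, but the residue evaluation is taken at the point $p$ rather than at $0$, and \eqref{residueformula} shows that translating the bundle by the divisor moves the residue at $p$ by $-\lambda$ evaluated appropriately, which in these coordinates is exactly the vector $(1,-p)$. Applying the isomorphism $(\varpi_p,{\rm res}_p)$ to the generator $(1,0)\in\rr\times\cc$ of the $\zz$-action on the source, we get $(\varpi_p,{\rm res}_p)(\psi(1,0)) = (1, -p)$ from the coordinate formula already established; hence the isomorphism is $\zz$-equivariant for the stated actions and descends to the quotient. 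The main obstacle is purely bookkeeping: pinning down that the correct lattice generator on the target side is $(1,-p)$ and not, say, $(1,0)$ or $(1,p)$ — this is where the sign of \eqref{residueformula} and the fact that we evaluate at $p$ instead of at the puncture both enter, and it must be traced carefully through the identification $\psi$.
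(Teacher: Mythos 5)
Your proof is correct and takes essentially the same route as the paper's: the coordinate formula from \eqref{for-paraweight} and \eqref{for-reseval}, nonvanishing of $\varpi_p$ on the kernel $\rr\cdot\nu_p$ of ${\rm res}_p$ combined with the exact sequence of Lemma \ref{exactatp} and a dimension count for the first isomorphism, and the computation $(\varpi_p,{\rm res}_p)(\psi(1,0))=(1,-p)$ for the equivariant descent. (Only a cosmetic slip: $(a_p,\alpha_p)=(1,0)$ occurs at $p=0$, not $p=1$, and the paper simply notes $\varpi_p(\nu_p)=1$ directly instead of your Cauchy--Schwarz positivity argument; neither point affects validity.)
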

\begin{proof}
We have chosen $\varpi _p$ so that $\varpi _p (\nu _p) = 1$. From the exact sequence of Lemma \ref{exactatp}, 
this implies that $\varpi _p$ and ${\rm res}_p$ are linearly independent so by dimension count we get the first isomorphism. 
For the second part, it suffices to recall that the subgroup $\zz (1,\log )\subset \Gamma (\pp ^1,T(1,\log ))^{\sigma}$ is given by 
generator $\psi (1,0)$, and to note that
$$
(\varpi _p,{\rm res}_p)(\psi (1,0)) = (1, -p).
$$ 
The formula in terms of coordinates just recalls our calculations above. 
\end{proof}

\subsection{Some questions}
\label{sub-questions}

The residue of a 
logarithmic $\lambda$-connection at a singular point lies in the fiber  $T(1,\log )_{\lambda}$. This
was seen by direct calculation: using the standard frame for $T(1,\log )$ over $\aaa ^1$ and the expression of the residue as a
well-defined complex number, we obtain this identification over the chart $\aaa ^1$. Then by calculation, it is compatible
with the corresponding identification for $\overline{X}$ over
the chart at infinity,  using the Riemann-Hilbert correspondence. This passage by an explicit calculation is unsatisfactory
but I haven't seen any way of improving it, so we formulate a question: 

\begin{question}
\label{quest-twistorp1}
Is there some more natural geometric way of identifying the residue of a logarithmic $\lambda$-connection at a singular
point, with a tangent vector to the $\lambda$-line? 
\end{question}

One possible approach  would be to give a geometric description of the meaning of points in the twistor line
$\pp ^1$. 

Similarly, we picked the definition of the parabolic weight function ``out of the hat''. Taking the scalar product with the average
of the two vectors, then normalizing, is certainly the easiest way to make sure that the function takes on nonzero values on the two vectors,
furthermore the resulting formula for $\varpi _p$ is relatively simple. Nonetheless, it would be better to  have a more motivated
reason for this choice. 

\begin{question}
\label{quest-paraweight}
Is there a geometric interpretation of the meaning of the parabolic weight function, preferably going with the geometric interpretation
we are looking for in Question \ref{quest-twistorp1}?
\end{question}

Another direction of questions is the relationship with $SO(3)$. 
The group of $\sigma$-invariant automorphisms of $\pp ^1$ is the group of metric automorphisms of
$S^2$, in other words it is $SO(3)$ by Lemma \ref{so3-innerprod}. The space of $\sigma$-invariant sections of $T(1,\log )$, which are the
$\sigma$-antiinvariant sections of the tangent bundle, may be 
identified with the perpendicular of the Lie algebra ${\mathfrak s}{\mathfrak o}(3)^{\perp}$. 
By naturality, $T(1,\log )$ also has an action of
$SO(3)$, corresponding to the adjoint action on the Lie algebra. 

\begin{question}
\label{quest-so3}
What is the significance of this action of $SO(3)$ and the identification of elements of the space of parabolic weights and residues,
with vectors in  the Lie algebra? 
\end{question}

It seems to be one of the subjects of Gukov and Witten's paper \cite{GukovWitten}.

\section{The general rank one case}
\label{sec-general}

Consider now the following situation: $X$ is a smooth projective variety, and
$D\subset X$ is a reduced strict normal crossings divisor written as $D=D_1 + \ldots + D_k$
where $D_i$ are its distinct smooth connected irreducible components. Let $U:= X-D$.

\subsection{The Hodge moduli space}
\label{sec-hodge}
Let $M_{\rm Hod}(X,\log D)$ denote the moduli space of triples $(\lambda , L , \nabla )$ where
$\lambda \in \aaa ^1$, $L$ is a line bundle on $X$ such that 
\begin{equation}
\label{c1cond}
c_1(L)_{\qq} \in \qq \cdot [D_1] + \cdots + \qq \cdot [D_k] \subset H^2(X, \qq ^{\perp}),
\end{equation}
and 
$$
\nabla : L\rightarrow L\otimes _{\Oo _X}\Omega ^1_X(\log D)
$$
is an integrable logarithmic $\lambda$-connection on $L$. 
The first coordinate is a map 
$$
\lambda : M_{\rm Hod}(X,\log D)\rightarrow \aaa ^1.
$$

Let ${\rm res}(\nabla ; D_i )\in \cc $ denote the residue of $\nabla$ along $D_i$.
Recall that the residue is a locally constant function and $D_i$ is connected so it is a complex scalar. 

For $\lambda \neq 0$, we have 
\begin{equation}
\label{c1compatible}
\lambda c_1(L) = - \sum _i {\rm res}(\nabla ; D_i)\cdot [D_i] \;\; \mbox{in} \;\; H^2(X, \cc ^{\perp}),
\end{equation}
as can be calibrated by comparing with the gauge transformation formulae \eqref{c1formula} and \eqref{residueformula}.
So the condition about $c_1(L)$ in the definition of $M_{\rm Hod}(X,\log D)$ is automatically satisfied when $\lambda \neq 0$, however
for $\lambda = 0$ this condition is nontrivial. 

Tensor product gives $M_{\rm Hod}(X,\log D)$ a structure of abelian group scheme relative to $\aaa ^1$. We need to use the
condition about Chern classes to prove that it is smooth over $\aaa ^1$, otherwise there would be additional irreducible 
components lying over $\lambda = 0$. 

\begin{lemma}
The morphism $M_{\rm Hod}(X,\log D)\rightarrow \aaa ^1$ is smooth.
\end{lemma}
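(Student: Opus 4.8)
The plan is to show smoothness by a deformation-theoretic argument: at a point $(\lambda_0, L_0, \nabla_0)$ the obstruction to lifting an infinitesimal deformation of $\lambda$ lives in an $H^2$ of a complex computing the hypercohomology that governs logarithmic $\lambda$-connections, and the Chern class condition \eqref{c1cond} is exactly what forces that obstruction to vanish. Concretely, the tangent/obstruction theory of $M_{\rm Hod}(X,\log D)$ relative to $\aaa^1$ is controlled by the hypercohomology of the two-term complex $[\Oo_X \xrightarrow{\nabla} \Omega^1_X(\log D)]$ (with the first term in degree $0$); since $X$ is a smooth projective variety this hypercohomology is finite-dimensional and one has $H^0 = $ automorphisms, $H^1 = $ deformations fixing $\lambda$, $H^2 = $ obstructions. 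To prove smoothness over $\aaa^1$ it suffices to show that the map on deformations is surjective onto deformations of $\lambda$, equivalently that the first-order deformation in the $\lambda$-direction is unobstructed, equivalently that a certain class in $H^2$ vanishes.

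The key steps, in order: (1) identify the relative deformation complex and write the Kodaira–Spencer/obstruction map for a deformation of $\lambda$; the point is that changing $\lambda$ to $\lambda + \epsilon$ changes the Leibniz rule, and the discrepancy is measured by $da$ for local trivializing sections, giving an obstruction cocycle living in $H^2$ of the complex above, or after the standard identification in $H^1(X,\Omega^1_X(\log D))$. (2) Use the residue/trace map $H^1(X,\Omega^1_X(\log D)) \to \bigoplus_i H^0(D_i,\Oo_{D_i}) = \cc^k$ together with the Hodge-theoretic fact that, modulo the span of the $[D_i]$, the obstruction class is determined by the residues; the relation \eqref{c1compatible} together with \eqref{c1cond} shows the obstruction class lies in $\qq\cdot[D_1] + \cdots + \qq\cdot[D_k]$ inside $H^2(X,\qq^\perp)$ — i.e., in the subspace where one can adjust $c_1(L)$ and hence genuinely lift the family. (3) Conclude that the obstruction vanishes, so $\lambda$ is a submersion locally, hence $M_{\rm Hod}(X,\log D) \to \aaa^1$ is smooth; for this last implication one also checks the fibers have the expected dimension, which follows because over $\lambda \ne 0$ the fiber is (a torsor over) $M_{\rm DR}$ of fixed dimension, and the Chern class condition excludes extra components over $\lambda = 0$ so the Dolbeault fiber has the same dimension.

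I expect the main obstacle to be step (2): correctly pinning down the obstruction class and verifying that the Chern-class constraint \eqref{c1cond} is precisely the condition needed for its vanishing. The subtlety is that without \eqref{c1cond} there really are extra components over $\lambda = 0$ (the author flags this explicitly), so the argument must use the constraint in an essential way rather than as a formality — one must show the constraint both cuts down $M_{\rm Hod}$ to the right dimension along $\lambda = 0$ and matches up, via \eqref{c1compatible}, with the image of the residue map so that the deformation in the $\lambda$-direction can always be accompanied by an honest deformation of the pair $(L,\nabla)$. An alternative, perhaps cleaner, route for the write-up is to exhibit an explicit smooth chart: locally present $M_{\rm Hod}(X,\log D)$ via a fixed $\Cc^\infty$ line bundle with a varying logarithmic $\lambda$-connection operator, where the defining equations are the integrability condition $\nabla^2 = 0$ (automatic in rank one since the curvature is a scalar $2$-form, so vacuous) and the Chern-class condition becomes an open/closed condition on components rather than an equation — making the smoothness manifest. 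I would develop whichever of these the preceding sections make most readily available.
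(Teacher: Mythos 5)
Your overall strategy (verify the infinitesimal lifting property over $\aaa^1$, with the Chern class hypothesis \eqref{c1cond} as the input that kills the obstruction to moving in the $\lambda$-direction) is the right one, but the key step (2) as you state it does not work. You say the obstruction class lies in $\qq\cdot[D_1]+\cdots+\qq\cdot[D_k]$, ``i.e.\ in the subspace where one can adjust $c_1(L)$ and hence genuinely lift the family.'' Adjusting $c_1(L)$ is not an available move in an infinitesimal deformation: over an artinian thickening the Chern class is constant, and changing it would move you to a different connected component rather than lift the given point. The actual mechanism by which \eqref{c1cond} enters is different: by the residue exact sequence $0\rightarrow \Omega^1_X\rightarrow \Omega^1_X(\log D)\rightarrow \oplus_i\Oo_{D_i}\rightarrow 0$, the classes $[D_i]\in H^1(X,\Omega^1_X)$ map to zero in $H^1(X,\Omega^1_X(\log D))$; hence \eqref{c1cond} says exactly that the \emph{logarithmic} Atiyah class of $L$ vanishes, i.e.\ that $L$ carries an honest logarithmic ($\lambda=1$) connection, with residues prescribed by the coefficients of $c_1(L)$ as in \eqref{c1compatible}. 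Without identifying this as the point, your obstruction computation has no way to conclude; note also that at $\lambda=0$ the deformation complex $[\Oo_X\rightarrow\Omega^1_X(\log D)]$ has zero differential, so its $H^2$ is $H^1(\Omega^1_X(\log D))\oplus H^2(\Oo_X)$ and your ``standard identification'' with $H^1(X,\Omega^1_X(\log D))$ needs the $H^2(\Oo_X)$ part to be disposed of separately (smoothness of $\operatorname{Pic}$). Your ``cleaner alternative'' chart does not avoid any of this: the integrability of $\lambda d+a$ in rank one is $\lambda\, da=0$, not vacuous (it ends up harmless only because global logarithmic $1$-forms on a projective variety are closed), and declaring the Chern condition ``open/closed on components'' does not by itself show that every Higgs point over $\lambda=0$ satisfying it deforms to $\lambda\neq 0$ --- which is precisely the issue.

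For comparison, the paper's proof is a direct, constructive verification of the lifting criterion and uses the hypothesis exactly through the mechanism above: given $(L,\nabla)$ over $X\times Y$ and an extension $Y\subset Y'$ with $\lambda'$ prescribed, extend $L$ to $L'$ by smoothness of the Picard scheme; use \eqref{c1cond} to produce an honest logarithmic connection $\nabla_{1,y}$ on $L_y$, and extend it to an integrable connection $\nabla_1'$ on $L'$ using smoothness of $M_{\rm DR}(X,\log D)$ (which follows from its group structure); then $\lambda'\nabla_1'$ is a $\lambda'$-connection on $L'$, and the difference $\nabla-\lambda'\nabla_1'|_{X\times Y}$ is a section of $H^0(X,\Omega^1_X(\log D))\otimes_{\cc}\Oo_Y$, which extends freely to $\Oo_{Y'}$. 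If you repair step (2) by replacing ``adjust $c_1$'' with the vanishing of the span of the $[D_i]$ in $H^1(X,\Omega^1_X(\log D))$ (equivalently, existence of a logarithmic connection on $L$), your cohomological argument becomes essentially a linearized version of the paper's construction.
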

{\em Proof:}
Suppose $\phi : Y\rightarrow M_{\rm Hod}(X,\log D)$ is a morphism from an artinian scheme. Suppose $Y\subset Y'$ is
an artinian extension provided with a morphism $\lambda ' : Y'\rightarrow \aaa ^1$. We need to extend to 
$Y'\rightarrow M_{\rm Hod}(X,\log D)$ lifting $\lambda '$. The map $\phi$ corresponds to a 
line bundle with integrable $\lambda$-connection $(L,\nabla )$ on $X\times Y$. By smoothness of the Picard scheme of $X$,
this extends to a line bundle $L'$ on $X\times Y'$. The condition about the Chern class of $L$ implies that there
exists some logarithmic connection $\nabla _{1,y}$ on $L_y$ where $y\in Y$ denotes the closed point and $L_y$ is the 
restriction of $L$ to the fiber over $y$. By smoothness of $M_{DR}(X,\log D)$, which follows because of its group structure under tensor
product, we can extend $\nabla _{1,y}$ to
some integrable connection $\nabla '_1$ on $L'$. Then $\lambda ' \nabla '_1$ is an integrable $\lambda '$-connection on 
$L'$. Restricted to $X\times Y$, we can write
$$
\nabla = \lambda ' \nabla '_1 |_{X\times Y} + A
$$
where 
$$
A\in H^0(X\times Y, \Omega ^1_X(\log D)\otimes _{\Oo _X}\Oo _{X\times Y}) \cong H^0(X,\Omega ^1_X(\log D)) \otimes _{\cc}\Oo _Y .
$$
Now extend $A$ in any way to a section
$$
A' \in H^0(X\times Y', \Omega ^1_X(\log D)\otimes _{\Oo _X}\Oo _{X\times Y'}) \cong H^0(X,\Omega ^1_X(\log D)) \otimes _{\cc}\Oo _{Y'} 
$$
and set
$$
\nabla ' := \lambda ' \nabla '_1  + A'.
$$
This provides the required extension. 
\eop

\subsection{Gauge group action}
\label{sec-gauge}
Recall the action of the local meromorphic gauge group $\Gg := \zz ^k$ on $M_{\rm Hod}(X,\log D)$. A vector
$g = (g _1, \ldots , g _k)$ sends $(\lambda , L , \nabla )$ to 
$(\lambda , L (g_1D_1 + \cdots + g _kD_k), \nabla ^{\alpha})$ where $\nabla ^{\alpha}$ is the logarithmic
$\lambda$-connection on $L (\alpha _1D_1 + \cdots + \alpha _kD_k)$ which coincides with $\nabla$ over $U$, 
via the isomorphism 
$$
L (g _1D_1 + \cdots + g _kD_k)|_U \cong L|_U.
$$

We have \eqref{residueformula}
$$
{\rm res}(\nabla ^{g}  ; D_i)  = {\rm res}(\nabla  ; D_i)  - \lambda g _i .
$$

The vector of residues, viewed in the standard framing $\frac{\partial}{\partial\lambda}$ for $T(1,\log )$, provides a morphism 
$$
R : M_{\rm Hod}(X,\log D)\rightarrow T(1,\log ) ^k 
$$
where the $i$-th coordinate of $R(\lambda , L , \nabla )$ is by definition ${\rm res}(\nabla  ; D_i)\cdot \frac{\partial}{\partial\lambda}$. 

The morphism $R$
is compatible with the action of $\Gg = \zz ^k = \zz (1,\log )^k$, where $g \in \zz ^k$ acts on $T(1,\log )^k$ by 
$$
(v_1,\ldots ,v_k)\mapsto (v_1-g_1\lambda \frac{\partial}{\partial\lambda}, \ldots , v_k-g_1\lambda \frac{\partial}{\partial\lambda}),
$$ 
adding $g$ times our standard generator of $\zz (1,\log )$. 

Let 
$$
M_{\rm Hod}(X,\log D)_{\Gm} := M_{\rm Hod}(X,\log D)\times _{\aaa ^1}\Gm .
$$
Then $\Gg$ acts properly discontinuously on $M_{\rm Hod}(X,\log D)_{\Gm}$ because this action lies
over the proper discontinuous action on $T(1,\log )^k_{\Gm}$ via the map $R$.
In the analytic category, we can form the quotient, and the Riemann-Hilbert correspondence gives
an isomorphism
$$
M_{\rm Hod}(X,\log D)_{\Gm}^{\rm an}/\Gg \cong \Gm \times M_B(U)
$$
where $M_B(U):= Hom (\pi _1(U), \Gm )$. 

In view of the Riemann-Hilbert correspondence, we define $M_{\rm Hod}(U)$ to be the stack-theoretical quotient
$$
M_{\rm Hod}(U):= M_{\rm Hod}(X,\log D)/\Gg ,
$$
and similarly for the fibers over $\lambda = 0,1$:
$$
M_{\rm Dol}(U):= M_{\rm Dol}(X,\log D)/\Gg , \;\;\; 
M_{\rm DR}(U):= M_{\rm DR}(X,\log D)/\Gg .
$$
Note that $\Gg$ acts trivially on $M_{\rm Dol}(X,\log D)$ so the quotient $M_{\rm Dol}(U)$ is a stack with $\Gg$ in the general stabilizer group.
If we started with a stack version of $M_{\rm Dol}(X,\log D)$ then the general stabilizer also contains the automorphism group $\Gm$ of a rank one Higgs bundle.
In the rank one case, the stabilizer groups are all the same. So the full stabilizer group of any point of $M_{\rm Dol}(U)$
would be $\Gm \times \Gg$. 

Using these definitions, the RH correspondence again says 
$M_{\rm DR}(U)\cong M_B(U)$, and the 
Deligne glueing process  applies as in \S \ref{sec-glue} to give an analytic stack 
$$
M_{\rm DH}(U) \rightarrow \pp ^1
$$
whose charts are $M_{\rm Hod}(U)$ and $M_{\rm Hod}(\overline{U})$. Note however that these charts don't have algebraic structures. 

We would like to investigate how to lift to a Deligne-Hitchin glueing on the space of logarithmic connections,
to get an analytic stack $M_{\rm DH}(X,\log D)$
which would have nicer geometric properties---its charts would be the Artin algebraic stacks.
We would then have a quotient expression
$$
M_{\rm DH}(U) = M_{\rm DH}(X,\log D)/\Gg \rightarrow \pp ^1 .
$$
One way of looking at this question would be to calculate the fundamental group of 
$M_{\rm DH}(U)\rightarrow \pp ^1 $ and see if it has a covering which resolves the stackiness over
$0$ and $\infty$. Instead, we construct directly the covering.

\subsection{The Riemann-Hilbert correspondence and glueing}
\label{sec-rhglue}
Our goal in this subsection is to define $M_{DH}(X,\log D)$ by Deligne glueing of $M_{Hod}(X,\log D)$ with
$M_{Hod}(\overline{X}, \log \overline{D})$. 

It will be useful to have a Betti version of $M_{DR}(X,\log D)$ to intervene in the
glueing. 
Suppose $\rho \in M_B(U)$. Recall from \S \ref{sub-localmonodromy} that for each component $D_i$ of the divisor, 
we get a well-defined local monodromy element $\rho ^{\perp} (\gamma _{D_i})\in \Gm ^{\perp}$. 

Consider the following diagram:
$$
M_B(U) \rightarrow (\Gm ^{\perp}) ^k \stackrel{{\rm exp}^{\perp}}{\longleftarrow} \cc ^k,
$$
where the $k$ copies are for the $k$ components of the divisor $D= D_1 + \ldots + D_k$,
the first map sends $\rho$ to its vector of local monodromy transformations,
and 
$$
{\rm exp}^{\perp}:(a_1,\ldots , a_k)\mapsto \left( (\cos (2\pi a_1), \sin (2\pi a_1)),\ldots , (\cos (2\pi a_k), \sin (2\pi a_k)) \right) .
$$ 
Let $M_B(X, \log D)$ denote the fiber product. Thus, a point in $M_B(X,\log D)$ is an uple
$(\rho ; a_1,\ldots , a_k )$ where $\rho$ is a representation of rank one over $U$ and
$a_i\in \cc$ are choices of circular logarithms of the monodromy operators $\rho ^{\perp}(\gamma _{D_i})\in \Gm ^{\perp}$.

Define an action of the gauge group $\Gg = \zz ^k$ on $M_B(X,\log D)$ by
$$
g= (g_1,\ldots , g_k): (\rho ; a_1,\ldots , a_k)\mapsto (\rho ; a_1 - g_1, \ldots , a_k - g_k).
$$

\begin{lemma}
\label{rhlift}
The Riemann-Hilbert correspondence lifts to
$$
M_{DR}(X,\log D) \cong M_B(X,\log D).
$$
\end{lemma}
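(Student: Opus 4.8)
The plan is to realize both sides as fiber products over $(\Gm ^{\perp})^k$ with the same first factor, the identification of which is precisely the Riemann--Hilbert correspondence on $U$. Recall that $M_B(X,\log D)$ is by construction the fiber product $M_B(U)\times _{(\Gm ^{\perp})^k}\cc ^k$, where the first map records the vector of local monodromy elements $\rho ^{\perp}(\gamma _{D_i})$ and the second is ${\rm exp}^{\perp}$. I would produce a comparison morphism by sending $(L,\nabla )\in M_{DR}(X,\log D)$ to the pair consisting of the monodromy representation $\rho$ of the restriction $(L,\nabla )|_U$ and the residue vector $({\rm res}(\nabla ;D_1),\ldots ,{\rm res}(\nabla ;D_k))\in \cc ^k$.

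The one local input needed to see that this pair lands in $M_B(X,\log D)$ is the identity
\begin{equation*}
{\rm exp}^{\perp}\bigl( {\rm res}(\nabla ;D_i)\bigr) \;=\; \rho ^{\perp}(\gamma _{D_i}) \qquad \text{in } \Gm ^{\perp},
\end{equation*}
i.e. that the residue along $D_i$ is a circular logarithm of the local monodromy transformation of $\S$\ref{sub-localmonodromy}. This is checked by the standard computation near a smooth point of $D_i$: writing $\nabla = d + {\rm res}(\nabla ;D_i)\,\frac{dz_1}{z_1}+\cdots$ in a coordinate system with $D_i=\{ z_1=0\}$ and solving the equation along the loop $t\mapsto (\epsilon e^{2\pi q t},0,\ldots ,0)$ yields exactly ${\rm exp}^{\perp}$ of the residue; this is the higher-dimensional, multi-component analogue of the formula $\rho ^{\perp}(\gamma _0)={\rm exp}^{\perp}(\alpha /\lambda )$ established at $\lambda =1$ in $\S$\ref{sec-p1}.

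To prove the morphism is an isomorphism I would identify $M_{DR}(X,\log D)$ itself with a fiber product. By the rank one meromorphic gauge analysis of $\S$\ref{sub-meromorphicgauge}, a logarithmic connection on $(X,D)$ is determined up to isomorphism by its restriction to $U$ together with its residue vector: two logarithmic connections with the same monodromy differ by a unique gauge element $g\in \Gg =\zz ^k$, and by \eqref{residueformula} (with $\lambda =1$) distinct $g$ change the residue vector, so the assignment $(L,\nabla )\mapsto \bigl( (L,\nabla )|_U,\ {\rm res}(\nabla ;D_\bullet )\bigr)$ is injective; conversely every rank one local system on $U$ extends to some logarithmic connection (Deligne's canonical extension, equivalently the surjectivity of Riemann--Hilbert on $U$ recorded above), and applying integer gauge shifts realizes every residue vector lying over the given monodromy vector under ${\rm exp}^{\perp}$, giving surjectivity onto the fiber product. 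Hence $M_{DR}(X,\log D)\cong M_{DR}(U)\times _{(\Gm ^{\perp})^k}\cc ^k$, where $M_{DR}(U)=M_{DR}(X,\log D)/\Gg$ maps to $(\Gm ^{\perp})^k$ by local monodromy. Since Riemann--Hilbert on $U$ gives $M_{DR}(U)\cong M_B(U)$ compatibly with the local-monodromy maps to $(\Gm ^{\perp})^k$, base change along this isomorphism turns the right-hand side into $M_B(U)\times _{(\Gm ^{\perp})^k}\cc ^k = M_B(X,\log D)$, and the composite is the morphism built above. One checks along the way that everything is compatible with the $\Gg$-action --- $g$ changes the $i$-th residue by $-g_i$ via \eqref{residueformula} and changes $a_i$ by $-g_i$ by the definition of the action on $M_B(X,\log D)$ --- so that passing to quotients recovers the Riemann--Hilbert isomorphism $M_{DR}(U)\cong M_B(U)$, and that the isomorphism is one of analytic rather than algebraic spaces, since ${\rm exp}^{\perp}$ is only holomorphic.

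I expect the only real obstacle to be bookkeeping, not substance: one must pin down the local identity ${\rm exp}^{\perp}({\rm res}(\nabla ;D_i))=\rho ^{\perp}(\gamma _{D_i})$ with the correct signs and the $2\pi$ normalization, which is exactly what the $(\ )^{\perp}$ formalism of $\S$\ref{sub-imaginary} and the convention ${\rm exp}^{\perp}(\theta )=(\cos 2\pi \theta ,\sin 2\pi \theta )$ were arranged to make transparent. Everything else --- the injectivity and surjectivity of the extension-plus-residue data, and the holomorphy of the bijection and its inverse --- reduces to the rank one facts already recorded in $\S$\ref{sub-meromorphicgauge} together with classical Riemann--Hilbert on the open variety $U$, and so is routine.
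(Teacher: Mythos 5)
Your proposal is correct and follows essentially the same route as the paper: the comparison map is the same assignment $(L,\nabla )\mapsto (\rho ;{\rm res}(\nabla ;D_1),\ldots ,{\rm res}(\nabla ;D_k))$, injectivity comes down to the fact that two logarithmic extensions of the same connection on $U$ differ by a meromorphic gauge element whose effect on residues is an integer shift (the paper phrases this by extending the isomorphism $\psi$ over $U$ across $D$, which is the same computation), and surjectivity is the same ``extend arbitrarily, then correct by an integer gauge transformation'' argument. The only presentational differences --- packaging both sides as fiber products over $(\Gm ^{\perp})^k$ and spelling out the identity ${\rm exp}^{\perp}({\rm res}(\nabla ;D_i))=\rho ^{\perp}(\gamma _{D_i})$, which the paper leaves implicit but which is fixed by its \S \ref{sec-p1} computation --- are harmless and, if anything, make the landing in $M_B(X,\log D)$ more explicit.
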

{\em Proof:}
Given a line bundle with integrable connection $(L,\nabla )$, associate the 
point 
$$
(\rho , {\rm res}(\nabla ;D_1),\ldots , {\rm res}(\nabla ; D_k)) \in 
M_B(U)\times _{(\cc ^{\ast})^k} \cc ^k = M_B(X,\log D).
$$
This is a morphism of analytical groups with the same dimension, so it suffices to prove that it is
injective and surjective. Before doing that, we verify compatibility with the gauge group action. 
Given $g= (g_1,\ldots , g_k)\in\Gg $, notice that the monodromy representation of $(L^g,\nabla ^g)$ is
the same as $\rho$. For the circular logarithms, the formula for residues 
$$
{\rm res}(\nabla ^g; D_i) = {\rm res}(\nabla ; D_i) -g_i
$$
implies the required compatibility with the gauge group action. 

For injectivity, suppose $(L,\nabla )$ and $(L',\nabla ')$ are two 
line bundles with connection, with the same monodromy representation and the same residues. 
There is a unique isomrphism $\psi : L|_{U}\cong L'|_{U}$ compatible with the monodromy representation
or equivalently with $\nabla$ and $\nabla '$ on $U$. 
Then, the poles or zeros of $\psi$ along a component $D_i$ are determined by the difference between the 
residues of $\nabla$ and $\nabla '$. The condition that the residues are the same means that $\psi$ has neither pole nor zero
along each component $D_i$. Thus, $\psi$ is an isomorphism of bundles over $X$.

For surjectivity, suppose $(\rho , a_1,\ldots , a_k)$ is a point in $M_B(X,\log D)$. Choose a line bundle with logarithmic connection 
$(L,\nabla )$ inducing the monodromy representation $\rho$ on $U$. Let $a'_1,\ldots , a'_k$ be the residues 
of $\nabla '$ along the $D_i$. We have $a_i = a'_i - g_i$ with $g_i\in \zz $. Now $(L^g, \nabla ^g)$ 
maps to $(\rho , a_1,\ldots , a_k )$. 
\eop

We have the conjugation isomorphism $\varphi : U \cong \overline{X}-\overline{D}$. If $\rho$ is a local system
on $U$ then we obtain $\varphi _{\ast}(\rho )$ a local system on $\overline{X}-\overline{D}$, defined by
$$
\varphi _{\ast}(\rho ) (\eta ) := \rho (\varphi ^{-1}\eta ).
$$

The divisor $\overline{D}$ breaks up into components $\overline{D}_1 + \ldots + \overline{D}_k$. Let
$\gamma _{\overline{D}_i}\in \pi _1(\overline{U},\overline{x})^{\perp}$  denote the local monodromy operator around $\overline{D}_i$. Then we have
$$
\varphi ^{-1}(\gamma _{\overline{D}_i}) = \gamma _{D_i} ^{-1}
$$
because $\varphi $ reverses orientation. Thus,
$$
\varphi _{\ast}(\rho )^{\perp}(\gamma _{\overline{D}_i}) = \rho ^{\perp}(\gamma _{D_i}) ^{-1}.
$$
Given a logarithm $a_i$ of $\rho ^{\perp}(\gamma _{D_i})$, its negative $-a_i$ is a logarithm of 
$\varphi _{\ast}(\rho )^{\perp}(\gamma _{\overline{D}_i})$. 
Therefore define the isomorphism 
$$
\varphi : M_B(X,\log D) \stackrel{\cong}{\rightarrow}M_B(\overline{X}, \log \overline{D})
$$
by 
$$
\varphi (\rho ; a_1,\ldots , a_k) := (\varphi _{\ast}(\rho ); -a_1, \ldots , -a_k ).
$$

Using the Riemann-Hilbert correspondence of Lemma \ref{rhlift} we can do the Deligne glueing exactly as before to get a moduli space
$$
M_{DH}(X,\log D)\rightarrow \pp ^1.
$$

The gauge group of meromorphic gauge transformations along the divisors $\Gg = \zz ^k$ acts on $M_{DH}(X,\log D)$ in the following way.
It acts in the canonical way on the first chart $M_{Hod}(X,\log D)$. On the other hand, a point $(g_1,\ldots , g_k)$ acts
by the canonical action of $(-g_1, \ldots , -g_k)$ on the chart $M_{Hod}(\overline{X},\log \overline{D})$, because of the sign change
in the definition of $\varphi $. The global quotient is the Deligne glueing considered previously: 
\begin{equation}
\label{dhgauge}
M_{DH}(U) = M_{\rm DH}(X,\log D) /\Gg \rightarrow \pp ^1.
\end{equation}
Over $\Gm \subset \pp ^1$ this quotient is isomorphic to $M_B(U)\times \Gm$ so it has a reasonable structure; however near the fibers over 
$0$ and $\infty$ the quotient is analytically stacky, with $\Gg$ contributing to the stabilizer group. 

\subsection{Exact sequences}
\label{sec-exact}
Since we are treating the case $r=1$, our moduli spaces are really just abelian cohomology groups, for 
example\footnote{In this discussion, we are ignoring the stabilizer group $\Gm$ in the stack structure on the moduli spaces.}
$$
M_B(U) = H^1(U,\Gm ).
$$
This may also be interpreted as a Deligne cohomology group, see \cite{EsnaultViehweg} \cite{Gajer} for example; 
we leave to the reader to make the link between our Hodge filtration
and the Hodge filtration on Deligne cohomology.

The exponential exact sequence for $U=X-D$ is
$$
0\rightarrow H^1(U,\zz ^{\perp})\rightarrow H^1(U,\cc ) \rightarrow M_B(U)\rightarrow H^2(U,\zz ^{\perp})\rightarrow H^2(U,\cc ).
$$
The exact sequence for the gauge group action is
$$
0 \rightarrow \Gg = \zz ^k \rightarrow M_B(X,\log D) \rightarrow M_B(U)\rightarrow 1.
$$
Let $W_1H^1(U)= H^1(X, \zz )$ denote the weight $1$ piece of the weight filtration. 
We have an exact sequence
$$
0\rightarrow W_1H^1(U) = H^1(X, \zz ^{\perp}) \rightarrow H^1(U,\zz ^{\perp}) \rightarrow \Gg = \zz ^k = H^2(X,U, \zz ^{\perp}) \rightarrow 
$$
$$
\rightarrow H^2(X,\zz ^{\perp}) \rightarrow H^2(U,\zz ^{\perp}) \rightarrow H^3(X,U,\zz ^{\perp})\rightarrow \ldots 
$$
The exponential exact sequence lifts to an exact sequence for the logarithmic space 
$$
0\rightarrow W_1H^1(U,\zz ^{\perp})\rightarrow H^1(U,\cc ) \rightarrow M_B(X,\log D)\rightarrow H^2(X,\zz^{\perp} )\rightarrow H^2(X,\cc ).
$$
The image of the connecting map in the first exponential exact sequence, is the torsion subgroup of $H^2(U)$. A duality calculation relates 
$H^3(X,U,\zz^{\perp} )$ to the $H^1(D_i, \zz ^{\perp})$ so this is torsion-free. Hence, any torsion element in 
$H^2(U,\zz ^{\perp})$ comes from $H^2(X,\zz ^{\perp})$. This fits in with the fact that any element of $M_B(U)$ admits a
Deligne canonical extension to a line bundle with logarithmic connection on $X$. 

These exact sequences all fit together into a diagram
$$
\begin{array}{rclclclc}
&                &                      &                  &                & 0              &\!\rightarrow\!           & H^1(X ,\zz ^{\perp})   \\
&                &                      &                  &                & \downarrow     &                          & \downarrow      \\
&                &                      &                  &                & 0              &\!\rightarrow    \!       & H^1(U ,\zz ^{\perp})   \\
&                &                      &                  &                & \downarrow     &                          & \downarrow      \\
& 0              &\!     \rightarrow \! & 0                &\!\rightarrow\! & \Gg = \zz ^k   &\!\stackrel{=}{\rightarrow}\! & H^2(X, U,\zz ^{\perp})   \\
& \downarrow     &                      & \downarrow       &                & \downarrow     &                          & \downarrow      \\
0\!\rightarrow \! 
&  H^1(X,\zz ^{\perp})   &\!     \rightarrow \! & H^1(U,\cc ) &\!\rightarrow\! & M_B(X,\log D)  &\!\rightarrow       \!    & H^2(X,\zz ^{\perp})   \\
&  \downarrow     &                     & \downarrow       &                & \downarrow     &                          & \downarrow      \\
0\!\rightarrow \! 
&  H^1(U,\zz ^{\perp}) &\!         \rightarrow\!& H^1(U,\cc ) &\!\rightarrow\! & M_B(U)    &\!\rightarrow     \!      & H^2(U,\zz ^{\perp}) \\
& \downarrow     &                      & \downarrow       &                & \downarrow     &                          & \downarrow      \\
\Gg \stackrel{=}{\rightarrow}\!
& H^2(X,U,\zz ^{\perp})&\!      \rightarrow  \! & 0                &\!\rightarrow\! & 0              &\!\rightarrow      \!     & H^3(X,U,\zz ^{\perp})  \, .
\end{array} 
$$
There are also exact sequences for localization near the singular points. The main one is
\begin{equation}
\label{basicexact}
0\rightarrow M_B(X) \rightarrow  M_B(X, \log D) \stackrel{{\rm res}}{\rightarrow} \cc ^k \rightarrow H^2(X,\Gm ) .
\end{equation}
It fits with the exponential exact sequence to give a diagram
$$
\begin{array}{cccccccc}
             & 0              &            & 0             &            &                &            &  \\
             & \downarrow     &            & \downarrow    &            &                &            &                   \\
0\rightarrow & H^1(X,\zz ^{\perp})    &\rightarrow & H^1(X,\zz ^{\perp})   &\rightarrow & 0              &\rightarrow & H^2(X,\zz ^{\perp})    \\
             & \downarrow     &            & \downarrow    &            & \downarrow     &            & \downarrow        \\
0\rightarrow & H^1(X,\cc )    &\rightarrow & H^1(U,\cc ) &\rightarrow & \cc ^k         &\rightarrow & H^2(X,\cc )    \\
             & \downarrow     &            & \downarrow    &            & \downarrow     &            & \downarrow            \\
0\rightarrow & M_B(X)         &\rightarrow & M_B(X, \log D)&\rightarrow & \cc ^k         &\rightarrow & H^2(X, \Gm )        \\
             & \downarrow     &            & \downarrow    &            & \downarrow     &            & \downarrow     \\
0\rightarrow & H^2(X,\zz ^{\perp})    &\rightarrow & H^2(X,\zz ^{\perp})   &\rightarrow & 0              &\rightarrow &  H^3(X,\zz ^{\perp})\,  .        \\
\end{array} 
$$
Dividing by the gauge group gives the diagram
$$
\begin{array}{cccccccc}
             & 0              &            & 0             &            &  0             &              &    \\
             & \downarrow     &            & \downarrow    &            & \downarrow     &              &                \\
0\rightarrow & H^1(X,\zz ^{\perp})    &\rightarrow & H^1(U,\zz ^{\perp}) &\rightarrow & \zz ^k         &  \rightarrow & H^2(X,\zz ^{\perp})       \\
             & \downarrow     &            & \downarrow    &            & \downarrow     &              & \downarrow           \\
0\rightarrow & H^1(X,\cc )    &\rightarrow & H^1(U,\cc ) &\rightarrow & \cc ^k         &\rightarrow   & H^2(X,\cc )   \\
             & \downarrow     &            & \downarrow    &            & \downarrow     &              & \downarrow           \\
0\rightarrow & M_B(X)         &\rightarrow & M_B(U)&\rightarrow     & (\Gm ^{\perp} ) ^k&\rightarrow   & H^2(X, \Gm )        \\
             & \downarrow     &            & \downarrow    &            & \downarrow     &              & \downarrow    \\
0\rightarrow & H^2(X,\zz ^{\perp})    &\rightarrow & H^2(U,\zz ^{\perp})&\rightarrow & 0              & \rightarrow  & H^3(X,\zz ^{\perp}) \, .             \\
\end{array} 
$$
The connected component of $M_B(U)$ containing
the identity representation, is a quotient:
$$
M_B(U)^o = H^1(U,\cc )/ H^1(U,\zz ^{\perp}).
$$
This identification is valid in the analytic category. 
The logarithmic space is obtained by dividing out instead by $W_1H^1(X -D, \zz ^{\perp}) = H^1(X, \zz ^{\perp})$:
$$
M_B(X,\log D)^o  = H^1(U,\cc )/ W_1H^1(U,\zz ^{\perp}).
$$
Let $\Gg ^o := \ker \left( \zz ^k \rightarrow H^2(X,\zz ^{\perp}) \right) $. Then $\Gg ^o$ acts on 
$M_B(X,\log D)^o$ with quotient $M_B(U)^o$. The above diagrams show that this is compatible with the
quotient descriptions.

\subsection{Compatibility with Hodge filtration}
\label{sec-compatibility}
The above diagrams can be replaced with the corresponding diagrams of twistor spaces over $\pp ^1$. 
This raises the question of showing that the maps preserve the twistor structure, another way of saying
that they should be compatible with the Hodge filtrations. 

Recall that the Hodge filtration and its complex conjugate for $H^1(U, \cc )$ lead to the twistor bundle 
$\xi (H^1(U, \cc ),F,\overline{F})$ over $\pp ^1$, see \cite{icm}. 
We can again take the quotient by the action of $H^1(U,\zz )$ or $W_1H^1(U,\zz )$.  This gives an identification of
the Deligne-Hitchin twistor space, at least for the connected component of the identity representation.

\begin{theorem}
\label{mhs-ident}
Denote by a superscript $(\; )^o$ the connected component of the identity representation. 
We have identifications of analytic spaces over $\pp ^1$,
$$
M_{\rm DH}(X,\log D)^{o} \cong \xi (H^1(U, \cc ),F,\overline{F}) / W_1H^1(U,\zz ^{\perp})
$$
and
$$
M_{\rm DH}(X,\log D)^{o}/\Gg ^o  \cong \xi (H^1(U, \cc ),F,\overline{F}) / H^1(U,\zz ^{\perp}).
$$
Thus, the maps in the above big diagrams are compatible with the twistor structures. 
\end{theorem}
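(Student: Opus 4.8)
The plan is to prove that, in the rank one case, the Deligne--Hitchin space $M_{\rm DH}(X,\log D)$ is nothing but Deligne's linear twistor bundle $\xi(-,F,\overline F)$ attached to the single filtered vector space $H^1(U,\cc)$, divided by an integral lattice. I would carry this out in three stages: (i) identify the Hodge chart $M_{\rm Hod}(X,\log D)^o$ over $\aaa^1$ with a Rees bundle; (ii) do the same for the conjugate chart; (iii) check that the Deligne--Hitchin gluing over $\Gm$ matches Deligne's $\xi$-gluing. The second displayed isomorphism then follows by a further quotient by $\Gg^o$, and the compatibility of the big diagrams with the twistor structures follows by functoriality of $\xi$.

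For (i), linearizing the group scheme $M_{\rm Hod}(X,\log D)\to\aaa^1$ at the trivial object gives, over $\aaa^1$, the Lie algebra $\mathbb H^1$ of the $\lambda$-twisted logarithmic de Rham complex $[\Oo_X\stackrel{\lambda d}{\to}\Omega^1_X(\log D)]$. By Deligne's description of the Hodge filtration on $H^1(U,\cc)$ through this complex, together with the $E_1$-degeneration of its Hodge spectral sequence, $\mathbb H^1$ is exactly the Rees bundle $\xi_F$ of $(H^1(U,\cc),F)$ --- the chart of $\xi(H^1(U,\cc),F,\overline F)$ over $\aaa^1$ --- with fibre $H^1(U,\cc)$ for $\lambda\neq 0$ and fibre $H^1(X,\Oo_X)\oplus H^0(X,\Omega^1_X(\log D))={\rm Gr}_F H^1(U,\cc)$ over $\lambda=0$; the Chern class condition \eqref{c1cond} used in the smoothness lemma is precisely what selects this component over $\lambda=0$. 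Passing to the connected group identifies $M_{\rm Hod}(X,\log D)^o$ with $\xi_F$ modulo a constant lattice, which is $W_1H^1(U,\zz^\perp)$ by evaluation at $\lambda=1$ using Lemma \ref{rhlift} and the exact sequences of \S\ref{sec-exact}. For (ii), the same argument over $\overline X$, together with the conjugation isomorphism $\varphi$ --- which, being antiholomorphic, carries the Hodge filtration $F_{\overline X}$ of $\overline X$ to the complex conjugate $\overline F$ and carries the integral lattice to the integral lattice --- identifies $M_{\rm Hod}(\overline X,\log\overline D)^o$ with the Rees bundle $\xi_{\overline F}$ of $(H^1(U,\cc),\overline F)$ modulo $W_1H^1(U,\zz^\perp)$.

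For (iii), over $\Gm$ the Deligne--Hitchin gluing is, through Lemma \ref{rhlift}, the conjugation map $\varphi_*$ on Betti moduli followed by the rescaling $\lambda\mapsto\mu=\lambda^{-1}$, $\nabla\mapsto\mu\nabla$; after the identifications of stages (i) and (ii) this becomes the identity on $H^1(U,\cc)$ composed with the Rees rescaling, which is precisely Deligne's gluing of the two charts of $\xi(H^1(U,\cc),F,\overline F)$. This is the computation generalizing the explicit one of \S\ref{sub-computation}: the orientation reversal of $\varphi$ supplies exactly the sign appearing in the gluing function of $\xi$ --- on the Tate weight two summand it is the sign distinguishing $T(1,\log)$ from $T(1)$ --- so the identification is $\sigma$-equivariant for the logarithmic twistor structure. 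I expect stage (iii), with its bookkeeping of signs, of Rees weights, and of which lattice is involved, to be the main obstacle; the conceptual content is merely that rank one nonabelian Hodge theory is the linear Hodge theory of $H^1(U)$. This yields
$$
M_{\rm DH}(X,\log D)^o\cong\xi(H^1(U,\cc),F,\overline F)/W_1H^1(U,\zz^\perp).
$$

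For the second isomorphism, recall from \S\ref{sec-exact} that $\Gg^o=\ker(\zz^k\to H^2(X,\zz^\perp))=H^1(U,\zz^\perp)/W_1H^1(U,\zz^\perp)$, and that it acts on $M_{\rm DH}(X,\log D)^o$ by the meromorphic gauge action with the sign change on the conjugate chart recorded in \eqref{dhgauge}. Under the identification above this is the residual translation action of $H^1(U,\zz^\perp)/W_1H^1(U,\zz^\perp)$ on $\xi/W_1H^1(U,\zz^\perp)$: the classes in $\Gg^o$ are residue classes lying in $F^1 H^1(U,\cc)$, of type $(1,1)$, so in the Rees chart over $\aaa^1$ they are represented by $\lambda$ times an integral class --- exactly the shift ${\rm res}(\nabla^g;D_i)={\rm res}(\nabla;D_i)-\lambda g_i$ of \eqref{residueformula} --- while the sign change matches the corresponding behaviour on the $\overline F$ chart. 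Hence
$$
M_{\rm DH}(X,\log D)^o/\Gg^o\cong\xi(H^1(U,\cc),F,\overline F)/H^1(U,\zz^\perp).
$$
Finally, every arrow in the big diagrams of \S\ref{sec-exact} is induced by a map of cohomology groups respecting Hodge filtrations and integral structures, and $\xi(-,F,\overline F)$ is functorial and exact in the relevant sense, so it carries those short exact sequences to short exact sequences of bundles over $\pp^1$ with their lattices; the identifications just constructed intertwine all these maps by naturality, which is the asserted compatibility of the big diagrams with the twistor structures.
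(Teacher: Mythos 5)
Your proposal is correct and takes essentially the same route as the paper: the paper's proof is precisely your stages (i)--(iii), with the relative exponential realized concretely as the \v{C}ech-cocycle map $(\lambda ,\{ g_{ij}\} ,\{ a_i\} )\mapsto (\lambda , L,\nabla )$, $L$ having transition functions $e^{g_{ij}}$ and $\nabla = \lambda d + a_i$, glued to the conjugate chart using that $\varphi$ carries the Hodge filtration of $\overline{X}$ to $\overline{F}$, and then shown to be surjective onto the connected component (even over $\lambda = 0,\infty$) with discrete kernel whose closure is the constant lattice $W_1H^1(U,\zz ^{\perp})\times \pp ^1$, before dividing by $\Gg ^o$. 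The only point where you are lighter than the paper is the phrase ``passing to the connected group'': the fibrewise surjectivity of the exponential at $\lambda = 0,\infty$ and the constancy of the kernel lattice are exactly what the paper's explicit cocycle map, smoothness plus dimension count, and closure-of-the-graph argument are there to establish.
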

\begin{proof}
Use a cocycle description of $H^1(U,\cc )$. Suppose we are given an open analytic covering of $X$ by 
open sets $U_i$, and let $U_{ij}:= U_i\cap U_j$ etc. Recall Grothendieck's theorem
$$
H^1(U,\cc ) = {\mathbb H}^1\left( \Oo _X \rightarrow \Omega ^1_X (\log D) \rightarrow \Omega ^2_X(\log D)\rightarrow \ldots \right) .
$$
An element here is given by a pair $(\{ g_{ij} \} , \{ a_i\} )$ where 
$$
g_{ij}\in \Oo _X(U_{ij}), \;\;\; a_i \in \Omega ^1_X(\log D) (U_i)
$$
and these satisfy the cocycle condition $g_{ij}+ g_{jk} + g_{ki}=0$, the compatibility condition 
$d(g_{ij}) = a_i -a_j$, and $d(a_i) = 0$. 
The image of this pair in $M_{DR}(X,\log D)$ is $(L,\nabla )$ where $L$ is the line bundle whose transition functions are $e^{g_{ij}}$,
and $\nabla := d + a_i$  over $U_i$, with $d$ being the constant connection with respect to the trivialization $L|_{U_i}\cong \Oo _{U_i}$. 
This image is compatible with the exponential map $H^1(U,\cc )\rightarrow M_B(U)$ via the Riemann-Hilbert correspondence. 

The Hodge filtration or Rees-bundle $\xi (H^1(U,\cc ), F)\rightarrow \aaa ^1$ may also be described as the bundle of triples 
$(\lambda , \{ g_{ij}\} , \{ a_i\} )$ subject to the conditions, analogues of the notion of $\lambda$-connection:
$$
g_{ij}+ g_{jk} + g_{ki}=0, \;\;\;
\lambda d(g_{ij}) = a_i -a_j, \;\;\; \lambda d(a_i) = 0.
$$
Map this triple to $(\lambda , L, \nabla )$ where $L$ is again given by transition functions $e^{g_{ij}}$, and 
$\nabla := \lambda d + a_i$  over $U_i$. This gives a map
$$
\xi (H^1(U,\cc ), F) \rightarrow M_{\rm Hod}(X,\log D).
$$
It is compatible with the action of $\Gm$, and is the same as the previous map in the fiber over $\lambda = 1$,
so it is compatible with the exponential map on Betti cohomology.

Note that the complex conjugate of the Hodge filtration on $H^1(U,\cc )$ is the same as the pullback by $\varphi : U^{\rm top}\cong \overline{U}^{\rm top}$,
of the Hodge filtration on $H^1(\overline{U},\cc )$. Indeed, $\varphi$ is antiholomorphic so
the pullback by $\varphi$ of a cohomology class containing at least a certain number of $dz_i$,
is a cohomology class containing at least that many $d\overline{z}_i$. 

Using all of these things, our map glues together with the corresponding map
for $\overline{U}= \overline{X}-\overline{D}$ to give a map of twistor spaces over $\pp ^1$,
$$
\xi (H^1(U,\cc ), F,\overline{F}) \rightarrow M_{\rm DH}(X,\log D).
$$
This is the required compatibility. 

From the cocycle description, we get that the map is surjective to the connected component
$M_{\rm DH}(X,\log D)^{o} $, 
even in the fibers over $\lambda = 0,\infty $. Using smoothness of both sides over $\pp ^1$ and a dimension count, 
we see that the kernel is discrete and flat over $\pp ^1$. In the general fiber it is $W_1H^1(U,\zz )$. The closure of the graph of this subgroup is again
a subgroup of the form $W_1H^1(U,\zz )\times \pp ^1\subset \xi (H^1(U,\cc ), F,\overline{F})$. 
Hence the isomorphism 
$$
M_{\rm DH}(X,\log D)^{o} \cong \xi (H^1(U, \cc ),F,\overline{F}) / W_1H^1(U,\zz ).
$$
The other one is obtained by dividing out by the gauge group $\Gg ^o$. 
\end{proof}

{\em Problem:} Find a similar description for the twistor spaces of 
other connected components of $M_B(U)$ corresponding to torsion elements in $H^2(U,\zz )$. This should be doable using the fact
that the points of finite order in $M_B(U)$ occur in every connected component, as can be seen from the analogue of the
exponential exact sequence 
$$
0\rightarrow H^1(U,\zz ) \rightarrow H^1(X_D, \qq ) \rightarrow H^1(U, \mu _{\infty}) \rightarrow H^2(U,\zz ) \rightarrow H^2(U,\qq ).
$$

\subsection{Preferred sections}
\label{sec-preferred}
We now describe how a tame harmonic bundle of rank one on $U=X-D$ gives rise to a section of the fibration \eqref{dhgauge}.  
In this discussion, we use the notion of parabolic structure and in particular Mochizuki's notion of KMS-spectrum 
\cite{Mochizuki}. See also Budur \cite{Budur} for a discussion of the rank one case.
In Theorem \ref{pref-id}, the space of harmonic bundles will be identified with the space of $\sigma$-invariant sections of 
$M_{\rm DH}(U)$. The latter doesn't refer to the notion of parabolic structure, but the identification map between them does. 

Fix a K\"ahler  metric $\omega$ on $X$, which restricts to a K\"'ahler metric on $U$. 
Recall that a tame harmonic
bundle over $U$ is a vector bundle $E$ together with operators $D'$ and $D''$ and a metric $h$, with respect to which these
operators satisfy certain equations \cite{Hitchin} \cite{Corlette} \cite{hbls}. Our preferred section will not depend on changes of $h$ by multiplying by a positive constant. Decompose 
\begin{equation}
\label{decomp}
D'' = \delbar + \theta , \;\;\; D' = \partial + \overline{\theta} .
\end{equation}
Use the notation $\Ee = (E,D',D'',h)$ for our harmonic bundle. 

Fix $\lambda \in \pp ^1$ and for now we suppose it is in the first standard chart $\aaa ^1$ so we think of $\lambda \in \cc$. Then
we get a holomorphic structure $\delbar + \lambda \overline{\theta}$ on the bundle $E$, and a  $\lambda$-connection $\lambda \partial + \theta$. 
Denote the holomorphic bundle with this holomorphic structure by $\Ee ^{\lambda}$ and the $\lambda$-connection by $\nabla ^{\lambda}$
By \cite{Mochizuki}, for any vector $a= (a_1,\ldots , a_k)$ of real numbers, we get an extension
of $\Ee ^{\lambda}$ to a  holomorphic bundle denoted $E^{\lambda}_a$ on $X$, and $\nabla ^{\lambda}$ extends to a logarithmic
$\lambda$-connection on  $E^{\lambda}_a$.  

If we pick $\lambda _0$ and any $i=1,\ldots , k$, then there is a set of critical values of $a_i$ called the {\em KMS-spectrum} \cite{Mochizuki}. 
For $a= (a_1,\ldots , a_k)$ with $a_i$ not in the KMS-spectrum at $\lambda _0$ and $D_i$,
there is a neighborhood $\lambda _0\in L\subset \pp ^1$ such that for $\lambda \in L$, the bundles with logarithmic connection
$(\Ee ^{\lambda}_a, \nabla ^{\lambda})$ vary holomorphically in $\lambda$. For each divisor component and fixed $\lambda _0$,
the KMS-spectrum is a $\zz$-translation orbit in $\rr$, that is it consists of everything of the form $a_i+u_i$ for $u_i\in \zz$. 
This is special to the rank one case, where there is only one KMS spectrum element in $\rr /\zz$.

The {\em KMS-critical locus} at $\lambda _0$ is the set of
all $a$ such that some $a_i$ is in the KMS-spectrum for $\lambda _0$ and $D_i$. This locus is a union of translates of the $k$ coordinate
hyperplanes. The translates included are all of the form $(a_1 + u_1,\ldots , a_k + u_k)$ where $a_i$ are some elements of the KMS-spectrum, 
and $u_i$ are any integers. The {\em KMS-chambers} are the connected components of the complement of the KMS-spectrum. Note that $\zz ^k$ acts
simply transitively on the set of KMS-chambers for any $\lambda_0$. Furthermore, the set of KMS-chambers varies continuously with $\lambda _0$:
a point which is well in the middle of a chamber for $\lambda _1$, will remain in a uniquely determined chamber for $\lambda _1$ nearby,
or to put it another way the KMS-critical locus varies continuously as a function of $\lambda$.  

In particular, if for any $\lambda _0$ we choose a particular KMS-chamber, then by following this around it determines a KMS-chamber for all other
$\lambda \in \aaa ^1$.

For different values of $a$ in the same KMS-chamber, the bundles with logarithmic connection $(\Ee ^{\lambda}_a, \nabla ^{\lambda})$ 
are all canonically isomorphic. Hence, the choice of a KMS-chamber for a given $\lambda _0$ determines a choice of KMS-chamber for
all $\lambda \in \aaa ^1$; let $a(\lambda )$ denote a function taking values in this chamber for each $\lambda$. 
We thus get the collection of bundles with logarithmic connection depending on $\lambda$,
$$
\lambda \mapsto (\Ee ^{\lambda}_{a(\lambda )}, \nabla ^{\lambda}).
$$
This is our preferred section of $M_{DH}(X,\log D)$ over $\aaa ^1$. If we choose a different chamber to begin with, then the section is modified by the
corresponding element of the local meromorphic gauge group $\Gg = \zz ^k$. The projection to the quotient gives a uniquely defined section of the
fibration \eqref{dhgauge}, at least over $\aaa ^1$. 

This construction patches together with the corresponding construction on the other chart $\aaa ^1$ at $\infty$. 
See \cite[Chapter 11]{Mochizuki}.

The construction we have described here is an isomorphism between harmonic bundles and $\sigma$-invariant sections of the fibration 
$M_{\rm DH}(X,\log  D)/\Gg  \rightarrow \pp ^1$. 

\begin{theorem}
\label{pref-id}
Let $M_{\rm har}(U)$ denote the group of tame harmonic line bundles on $U$. The map described above goes from here to the space of
$\sigma$-invariant sections of $M_{\rm DH}(X,\log  D)$ modulo the gauge group action:
$$
\Pp : M_{\rm har}(U) \rightarrow \Gamma (\pp ^1 , M_{\rm DH}(X,\log D)) ^{\sigma} /\Gg .
$$
This map is an isomorphism. 
\end{theorem}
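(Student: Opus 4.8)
The plan is to build the inverse map and show both composites are the identity, working one component of $D$ at a time and then assembling. First I would fix a tame harmonic line bundle $\Ee$ on $U$ and trace through the construction: the choice of a KMS-chamber at one $\lambda_0$ propagates continuously to give $a(\lambda)$ for all $\lambda\in\aaa^1$, and Mochizuki's theory (\cite[Chapter 11]{Mochizuki}) guarantees that $\lambda\mapsto(\Ee^\lambda_{a(\lambda)},\nabla^\lambda)$ is holomorphic in $\lambda$, extends across the chart at infinity, and is $\sigma$-invariant after descending modulo $\Gg$. So $\Pp$ is well-defined; the content is bijectivity.

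For surjectivity I would invoke Theorem \ref{mhs-ident}, which on the identity component identifies $M_{\rm DH}(X,\log D)^o/\Gg^o$ with $\xi(H^1(U,\cc),F,\overline F)/H^1(U,\zz^\perp)$, a quotient of a sum of slope-one and slope-two pieces. The slope-two part is precisely a product of copies of $T(1,\log)$, one per divisor component, by \S\ref{sec-p1}, and the slope-one part behaves exactly as in the compact case (Lemma \ref{uniglobal}). Thus a $\sigma$-invariant section of $M_{\rm DH}(X,\log D)/\Gg$ decomposes: its ``interior'' part is governed by $\Gamma(\pp^1, A\otimes\Oo_{\pp^1}(1))^\sigma$ for the lattice $A$ coming from $W_1H^1$, and its residue part lands in $\bigl(\Gamma(\pp^1,T(1,\log))^\sigma\bigr)^k/\Gg$, which by Proposition \ref{prop-parabolicweight} is exactly $\prod_i(\rr\times\cc)/(1,-p_i)\zz$ — the space of pairs (parabolic weight, residue of Higgs field) at each $D_i$, together with the scalar constant. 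Each such datum is realized by a unique tame harmonic bundle: the compact/interior part by the classical correspondence between $\sigma$-invariant sections and harmonic bundles on $X$ (Proposition \ref{prop-pref}, Lemma \ref{uniglobal}), and the local parabolic weight plus Higgs residue at each $D_i$ by Mochizuki's classification of tame harmonic metrics on a rank-one local system with prescribed parabolic structure near the divisor. Injectivity is the reverse bookkeeping: two harmonic bundles giving the same $\sigma$-invariant section mod $\Gg$ have the same underlying flat bundle on $U$ (read off over $\lambda=1$), the same parabolic weights (the kernel of the residue evaluation, i.e.\ the $\varpi_{p_i}$ coordinate in Proposition \ref{prop-parabolicweight}), and the same Higgs residues (over $\lambda=0$); tameness plus a pluriharmonicity/uniqueness argument forces the metrics to agree up to the positive scalar we have already quotiented out.

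Concretely I would organize the argument as: (1) show $\Pp$ is a group homomorphism, compatible with the tensor-product structures on both sides, reducing everything to the identity component plus a torsion/component-group comparison; (2) on the identity component, use Theorem \ref{mhs-ident} to replace $M_{\rm DH}(X,\log D)^o/\Gg^o$ by the explicit Rees-bundle quotient $\xi(H^1(U,\cc),F,\overline F)/H^1(U,\zz^\perp)$ and split off the weight-one and weight-two summands; (3) identify $\sigma$-invariant sections of the weight-two summand with parabolic weights and Higgs residues via \S\ref{sec-p1} and Proposition \ref{prop-parabolicweight}, and $\sigma$-invariant sections of the weight-one summand with harmonic data on the compact part via the compact theory; (4) quote Mochizuki to match these parameters bijectively with tame harmonic structures; (5) handle the non-identity components using the observation that torsion points occur in every component, so the component group contributes an identical torsion shift on both sides.

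The main obstacle I expect is step (4), the local analytic input: identifying, for each fixed $\lambda$ and each $D_i$, the precise bijection between a point of $(\rr\times\cc)/\zz$ — a parabolic weight and a residue — and the germ of a tame harmonic metric adapted to that parabolic structure, and then checking this is uniform in $\lambda$ so that it patches to a global statement over $\pp^1$. This is exactly where Mochizuki's KMS-spectrum machinery and the continuity of KMS-chambers in $\lambda$ are doing the heavy lifting; the rest is essentially linear algebra over the exact sequences already assembled in \S\ref{sec-exact}, together with the compact case of Simpson's nonabelian Hodge theory. A secondary subtlety is making sure the sign conventions from \eqref{residueformula} — which forced the sign twist in $\sigma_{T(1,\log)}$ — are consistently propagated through the gauge-group action on the complex-conjugate chart as recorded in \eqref{dhgauge}, so that ``$\sigma$-invariant'' on the twistor side really does correspond to a harmonic (as opposed to merely flat with parabolic structure) bundle.
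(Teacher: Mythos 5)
Your injectivity argument is essentially the paper's (it amounts to Theorem \ref{harmoniccompose} plus the fact that a rank-one filtered local system is determined by its monodromy and filtration weights), but your surjectivity scheme has a genuine gap. You propose to split a $\sigma$-invariant section into an ``interior'' weight-one part and a ``residue'' weight-two part and to realize each independently, the local part by ``Mochizuki's classification of tame harmonic metrics \ldots near the divisor.'' Two things go wrong. First, the weight-two piece is not $T(1,\log)^k$, one copy per component: by Lemma \ref{weight2} it is $T(1,\log)^b$, the kernel of $\Sigma_{\rm DH}$ (already in the model case of \S\ref{sec-p1} one has $k=2$ but a single copy of $\Oo_{\pp^1}(2)$), because the residues, the parabolic weights and $c_1(L)$ are coupled by the global relation \eqref{c1compatible}, i.e.\ by $\Lambda_{\rm DH}+\Sigma_{\rm DH}$ in Proposition \ref{weight-exact}. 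Consequently the local data at the $D_i$ cannot be prescribed independently of the interior part, and a purely local (germ-level) classification of tame harmonic metrics cannot produce the global harmonic bundle: what is needed is the global existence theorem for harmonic metrics on a \emph{parabolic} logarithmic $\lambda$-connection, which applies only after one checks that the parabolic first Chern class vanishes. That verification, $c_1(E(\sum b_iD_i))=c_1(E)+\sum b_i[D_i]=0$ with $b_i=\varpi_{\lambda,D_i}(\epsilon)$, using the normalization $\varpi_p(1,0)=1$ and the exact sequence of Proposition \ref{weight-exact}, is the central computation of the actual proof and is exactly the step your ``the rest is essentially linear algebra'' elides.

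Second, the decomposition itself is not available as stated: the extension of the slope-two bundle by the slope-one bundle splits as bundles, but not canonically, and the lattice $H^1(U,\zz^{\perp})$ (an extension of its image in $Gr^W_2$ by $H^1(X,\zz^{\perp})$) need not split integrally, so ``split off the weight-one and weight-two summands'' of the quotient compatibly with $\sigma$ and the group structure requires an argument you do not give. The paper avoids any splitting: given $\epsilon$, it evaluates at one $\lambda$, endows $(E,\nabla)$ with the parabolic structure $b_i=\varpi_{\lambda,D_i}(\epsilon)$, invokes the harmonic existence theorem after the $c_1$ check, and then concludes $\Pp(\Ee)=\epsilon$ by the rigidity Lemma \ref{section-unique}, which says a $\sigma$-invariant section is determined by its value and its parabolic weights at a single point (using the exact sequence \eqref{exseq} and the weight-one property of Lemma \ref{uniglobal}); this lemma is what replaces your proposed decomposition. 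Finally, your step (5) leans on Theorem \ref{mhs-ident}, which covers only the identity component — the paper explicitly leaves the other components as an open problem — whereas the actual proof never uses that theorem and works on all components through the weight-filtration sequences.
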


The map is given by the discussion above. The proof that it is an isomorphism, which requires techniques from the next subsections, 
will be given in Corollary \ref{pref-id-inj} and \S \ref{sub-proof} below. 

This theorem, which is only in the rank one case, nevertheless suggests that in the correspondence between harmonic bundles and 
pure twistor $\Dd$-modules of \cite{Mochizuki2} and \cite{Sabbah} the parabolic weight should come out of the structure of 
twistor $\Dd$-module, without having to impose an additional parabolic structure on the $\Dd$-module side. It isn't clear to me
to what extent this statement may already be contained in \cite{Mochizuki2} and \cite{Sabbah}. 

\subsection{Residues and parabolic structures}
\label{sub-parabolic}

We now get to one of the main observations in this article: that the three dimensional space of $\sigma$-invariant sections of $T(1,\log )$ 
encodes the data of residues and parabolic weights for a harmonic bundle. 

Fix a divisor component $D_i$, and a point $p\in \aaa ^1$. The fiber $T(1,\log )_p$ is identified with $\cc$ by the frame 
$\frac{\partial}{\partial \lambda}$. Hence the residue map can be composed with this identification to give
$$
{\rm res}_{D_i,p}:M_{\rm Hod}(X,\log D)_p \rightarrow \cc \cong T(1,\log )_p.
$$
It sends a logarithmic $p$-connection $(E,\nabla )$ to ${\rm res}(\nabla ; D_i) \frac{\partial}{\partial \lambda}(p)$.

The glueing function for residues of logarithmic $\lambda$-connections is $-\lambda ^2$, the same as for $T(1,\log )= T\pp ^1$.
Therefore, this map glues with the same map on the chart $M_{\rm Hod}(\overline{X},\log \overline{D})$ to give a bundle map over $\pp ^1$,
$$
{\rm res}^{\rm DH}_{D_i}: M_{\rm DH}(X,\log D)\rightarrow T(1,\log ).
$$

\begin{lemma}
\label{residuesigma}
The residue map ${\rm res}^{\rm DH}_{D_i}$ is compatible with the antipodal involutions on $M_{\rm DH}(X,\log D)$ and 
$T(1,\log )$, so it gives a map on $\sigma$-invariant sections 
$$
\Gamma (\pp ^1, M_{\rm DH}(X,\log D)) ^{\sigma} \rightarrow \Gamma (\pp ^1, T(1,\log )) ^{\sigma}. 
$$
\end{lemma}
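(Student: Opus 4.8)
The plan is to verify the compatibility ${\rm res}^{\rm DH}_{D_i}\circ \sigma_{M} = \sigma_{T(1,\log)}\circ {\rm res}^{\rm DH}_{D_i}$ directly, using the fact that $\sigma$ on both sides is the composite $CDN$ of the conjugation, duality and multiplication-by-$(-1)$ involutions, and checking compatibility with each of the three factors separately. Since $C$, $D$, $N$ all commute and their product is $\sigma$, it suffices to show ${\rm res}^{\rm DH}_{D_i}$ intertwines each of $C$, $D$, $N$ (on $M_{\rm DH}(X,\log D)$) with the corresponding involution on $T(1,\log)=T\pp^1$. For $N$ (multiplication by $-1$ on the $\lambda$-connection) and for $D$ (duality, which negates the residue), the effect on the residue coordinate $\alpha={\rm res}(\nabla;D_i)$ mirrors exactly the effect of the analogous operations on the tangent vector $\alpha\frac{\partial}{\partial\lambda}$: $N$ covers $\lambda\mapsto -\lambda$ and sends $\alpha\mapsto -\alpha$, $D$ covers the identity and sends $\alpha\mapsto -\alpha$, and by the computations of \S\ref{sub-tate-antipodal} these are precisely how the corresponding automorphisms of $T(1,\log)$ act in the standard frame $\frac{\partial}{\partial\lambda}$. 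For $C$ (complex conjugation composed with Riemann--Hilbert to re-land in the original chart) one uses that ${\rm res}$ is an algebraic function of $(\lambda,L,\nabla)$ of the sort that conjugates to the corresponding function on $\overline{X}$, together with the key point established in \S\ref{sub-computation}: the glueing function for residues of logarithmic $\lambda$-connections is $-\lambda^{-2}$, identical to the transition function for $T\pp^1$. So when $C$ is pushed back into the first chart on the target side, the resulting formula $\alpha\mapsto -\overline{\lambda}^{\,-2}\overline{\alpha}$, over $\lambda\mapsto -\overline{\lambda}^{\,-1}$, is exactly the formula for $\sigma_{T(1,\log)}$ acting on $\alpha\frac{\partial}{\partial\lambda}$ recorded in \S\ref{sub-tate-antipodal}.

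Concretely, I would first observe that ${\rm res}^{\rm DH}_{D_i}$ is already constructed in the excerpt as a well-defined bundle map over $\pp^1$ (it glues because both bundles have the same transition function $-\lambda^{-2}$), so the only content is the statement about $\sigma$. Then I would reduce to checking compatibility fiberwise over a general $\lambda\in\Gm\subset\pp^1$ — away from $\{0,\infty\}$ both sides are in the standard chart, and by continuity/density an identity of holomorphic (resp.\ antiholomorphic) maps verified there extends to all of $\pp^1$. On this locus, restricting along a section or just working with the residue coordinate reduces the whole assertion to the one-variable computation $(\lambda,\alpha)\mapsto(-\overline{\lambda}^{\,-1},-\overline{\lambda}^{\,-2}\overline{\alpha})$ matching on both sides, which is literally the content of the displayed $\sigma$-formula in \S\ref{sub-antipodal} (for $M_{\rm DH}$ of $\pp^1-\{0,\infty\}$) compared with the $\sigma_{T(1,\log)}$-formula in \S\ref{sub-tate-antipodal}. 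The general-$X$ case is identical since the residue along a single smooth connected component $D_i$ behaves exactly as in the rank-one curve computation — the divisor geometry away from $D_i$ plays no role in the residue at $D_i$.

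Finally, once the equivariance ${\rm res}^{\rm DH}_{D_i}\circ\sigma = \sigma\circ{\rm res}^{\rm DH}_{D_i}$ is established, a $\sigma$-invariant section $s\in\Gamma(\pp^1,M_{\rm DH}(X,\log D))^{\sigma}$ is carried to ${\rm res}^{\rm DH}_{D_i}\circ s$, which satisfies $\sigma\circ({\rm res}^{\rm DH}_{D_i}\circ s) = {\rm res}^{\rm DH}_{D_i}\circ\sigma\circ s\circ(\text{antipode}) = {\rm res}^{\rm DH}_{D_i}\circ s\circ(\text{antipode})$, i.e.\ it is a $\sigma$-invariant section of $T(1,\log)$; functoriality of this assignment is immediate. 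I expect the only delicate step to be the $C$-compatibility, precisely because — as the author himself flags in \S\ref{sub-questions} — the identification of the residue with a tangent vector to the $\lambda$-line is only known via the explicit Riemann--Hilbert calculation rather than a conceptual argument; but since that calculation has already been carried out in \S\ref{sub-computation} and \S\ref{sub-antipodal}, here it is simply a matter of transporting those formulas through ${\rm res}^{\rm DH}_{D_i}$ and matching signs, the $-\lambda^{-2}$ coincidence doing all the real work.
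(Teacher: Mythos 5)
Your argument is correct and is essentially the paper's own proof: the author simply notes that the calculation near $D_i$ is the same as the explicit $\sigma$-computation of \S\ref{sub-antipodal} and matches the formula for $\sigma_{T(1,\log)}$ in \S\ref{sub-tate-antipodal}, which is exactly what your chart-by-chart comparison (with the $CDN$ decomposition and the $-\lambda^{-2}$ glueing coincidence) spells out in more detail. The concluding formal step about $\sigma$-invariant sections is likewise the intended one.
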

\begin{proof}
The calculation for $X$ near $D$ is the same as that of \S \ref{sub-antipodal}. Comparing with
the calculation of \S \ref{sub-tate-antipodal}, we see that the residue is compatible with $\sigma$
and it induces a map on $\sigma$-invariant sections. 
\end{proof}

Next, consider the projection ${\rm pr}_i: \Gg \rightarrow \zz (1,\log )$ which sends $(g_1,\ldots , g_k)$ to $g_i$. 

\begin{lemma}
\label{residuegauge}
The residue map ${\rm res}^{\rm DH}_{D_i}$ is compatible with the action of the local meromorphic gauge group $\Gg$
via the projection ${\rm pr}_i$ composed with the morphism $\zz (1,\log )\rightarrow T(1,\log )$, so it gives a map on quotients
$$
\Gamma (\pp ^1, M_{\rm DH}(X,\log D)) ^{\sigma} /\Gg \rightarrow \Gamma (\pp ^1, T(1,\log )) ^{\sigma}/\zz (1,\log ). 
$$
\end{lemma}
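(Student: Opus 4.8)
The plan is to reduce the statement to the residue formula \eqref{residueformula} together with the explicit description of the generator of $\zz(1,\log)$ in the standard chart, chart by chart, in the same way that the compatibility of the residue morphism $R$ with $\Gg$ was obtained in \S\ref{sec-gauge}. Concretely, for $g=(g_1,\ldots,g_k)\in\Gg$ and a $\sigma$-invariant section $s$ of $M_{\rm DH}(X,\log D)$, I would establish the equivariance identity
$$
{\rm res}^{\rm DH}_{D_i}(g\cdot s) \;=\; {\rm pr}_i(g)\cdot {\rm res}^{\rm DH}_{D_i}(s),
$$
where on the right ${\rm pr}_i(g)=g_i\in\zz(1,\log)$ acts on $\Gamma(\pp^1,T(1,\log))^{\sigma}$ via the morphism $\zz(1,\log)\to T(1,\log)$, i.e. by adding $g_i$ times the generating section. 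Since ${\rm pr}_i:\Gg=\zz^k\to\zz(1,\log)$ is the (surjective) $i$-th projection, such an identity forces ${\rm res}^{\rm DH}_{D_i}$ to descend to a map of quotients, and combined with Lemma \ref{residuesigma} (which already gives the map on $\sigma$-invariant sections) this yields the claimed arrow.

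The identity itself I would check on the open chart $\aaa^1\subset\pp^1$ over which $M_{\rm DH}(X,\log D)$ restricts to $M_{\rm Hod}(X,\log D)$; since two global sections of $T(1,\log)$ agreeing on a dense open agree everywhere, this suffices. Over this chart the $\Gg$-action is the canonical one, $(\lambda,L,\nabla)\mapsto(\lambda,L^g,\nabla^g)$, and \eqref{residueformula} gives ${\rm res}(\nabla^g;D_i)={\rm res}(\nabla;D_i)-\lambda g_i$. In the standard frame $\frac{\partial}{\partial\lambda}$ for $T(1,\log)$ this says that ${\rm res}^{\rm DH}_{D_i}(g\cdot s)$ differs from ${\rm res}^{\rm DH}_{D_i}(s)$ by the section $-\lambda g_i\frac{\partial}{\partial\lambda}=g_i\cdot\bigl(-\lambda\frac{\partial}{\partial\lambda}\bigr)$; but $-\lambda\frac{\partial}{\partial\lambda}$ is precisely the generator of $\zz(1,\log)$ in the standard chart (\S\ref{sub-tate-integer}), so the difference is exactly $g_i$ times the generator, i.e. the action of ${\rm pr}_i(g)$. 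This is the very same computation underlying the $\Gg$-equivariance of $R$ in \S\ref{sec-gauge}.

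As a consistency check, and to see that nothing goes wrong at the glueing, I would note that on the chart at infinity the gauge group acts through $(-g_1,\ldots,-g_k)$ (the sign in the definition of $\varphi$, see \eqref{dhgauge}), while the generator $-\lambda\frac{\partial}{\partial\lambda}$ of $\zz(1,\log)$ also changes sign when read in that chart (\S\ref{sub-tate-integer}); the two sign changes cancel, compatibly with the fact that ${\rm res}^{\rm DH}_{D_i}$ was built by glueing the two chartwise residue maps along the common transition factor $-\lambda^2$. The one point demanding genuine care — the "main obstacle", such as it is — is exactly this sign bookkeeping across the conjugate chart: one must be certain that the sign flip of the gauge action and the sign flip of the $\zz(1,\log)$ generator are the same flip and therefore cancel rather than reinforce. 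Since both flips are the "weight two" signs already isolated in \eqref{residueformula} and \S\ref{sub-tate-integer}, no new input is required; the argument is just the assembly of the two chartwise statements.
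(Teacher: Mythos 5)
Your argument is correct and is essentially the paper's own proof: a pointwise/chartwise application of \eqref{residueformula}, identifying the image of the generator of $\zz(1,\log)$ as $-\lambda\frac{\partial}{\partial\lambda}$ (i.e.\ $-p\frac{\partial}{\partial\lambda}$ at the point $p$), which gives the compatibility on global sections. Your extra verification of the cancelling sign flips on the conjugate chart is just an expanded form of the sign remark already made in \S\ref{sub-tate-integer}, so nothing differs in substance.
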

\begin{proof}
At each point $p$, the action of the gauge group is compatible by equation \eqref{residueformula}
with the map ${\rm pr}_i$ via the standard morphism $\zz (1,\log ) \rightarrow T(1,\log )_p$
which sends the generator to $-p\frac{\partial}{\partial\lambda}$. 
This gives the compatibility on global sections.
\end{proof}

For any $p\in\aaa ^1\subset \pp ^1$, we can compose the map of Lemma \ref{residuesigma} with the isomorphism
$(\varpi _p,{\rm res}_p)$ of Proposition \ref{prop-parabolicweight}, comprising the parabolic weight function 
$\varpi _p$ and the residue or evaluation at $p$. This gives a map
\begin{equation}
\label{globalsigma}
(\varpi _p, {\rm res}_p)_{D_i} : \Gamma (\pp ^1, M_{\rm DH}(X,\log D)) ^{\sigma} \rightarrow \rr \times \cc .
\end{equation}
Dividing by the action of the local meromorphic gauge group corresponds to dividing by the action of $\zz$ on $\rr\times \cc$ generated
by $(\varpi _p, {\rm res}_p)(\psi (1,0))= (1,-p)$. We get a quotient map
\begin{equation}
\label{globalsigmagauge}
(\varpi _p, {\rm res}_p)^{\Gg}_{D_i} : \Gamma (\pp ^1, M_{\rm DH}(X,\log D)) ^{\sigma}/\Gg  \rightarrow \frac{\rr \times \cc}{(1,-p)\cdot \zz} .
\end{equation}

Compose  with the preferred-sections  map $\Pp$ of  Theorem \ref{pref-id}. Our main observation is that this 
encodes the parabolic weight and residue of a harmonic bundle. These were defined for the case of curves, at $\lambda = 0$ and 
$\lambda = 1$, in \cite{hbnc}. They were defined in higher dimensions and for all $\lambda$ in \cite{Mochizuki}. 

Given a parabolic bundle $F=\{ F_b\}$ filtered by bundles indexed in the increasing sense by $b\in \rr ^k$, suppose we have chosen $E$ as one of these bundles. Define the
{\em parabolic weight} to be the element $b=(b_1,\ldots , b_k)$ with $b_i$ as small as possible so that $E=F_b$. Given a harmonic bundle 
$\Ee = (E,D',D'',h)\in M_{\rm Har}(U)$, we obtain for any $\lambda$ a parabolic logarithmic $\lambda$-connection $\Ee ^{\lambda}$ by 
\cite{Mochizuki}. Its underlying parabolic
bundle has a parabolic weight as defined at the start of this paragraph, and the parabolic $\lambda$-connection on $\Ee ^{\lambda}$ has a residue along
each $D_i$. The parabolic weight of $\Ee ^{\lambda}$ is determined by the rate of growth of the harmonic metric: if $u$ is a unit section near a point of $D_i$,
and if $D_i$ is cut out by the equation $z=0$, then $|u|_h\sim |z|^{-b_i}$ where $b_i$ is the parabolic weight along $D_i$. 

\begin{theorem}
\label{harmoniccompose}
Suppose $D_i$ is a divisor component and $p\in \aaa ^1\subset \pp ^1$. Suppose $\Ee = (E,D',D'',h)\in M_{\rm Har}(U)$ is a rank one harmonic bundle on $U$. 
Then 
$$
(\varpi _p,{\rm res}_p)^{\Gg}_{D_i} (\Pp (\Ee )) \in \frac{\rr \times \cc}{(1,-p)\cdot \zz} 
$$
is the parabolic weight and residue of the parabolic logarithmic $\lambda$-connection $\Ee ^{\lambda}$ at $\lambda = p$.
\end{theorem}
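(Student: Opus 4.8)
The plan is to compute both sides explicitly in the coordinates $(a,\alpha )$ on the space of $\sigma$-invariant sections of $T(1,\log )$ and then recognise the answer as Mochizuki's pair $({\mathfrak p},{\mathfrak e})$ via Proposition~\ref{prop-parabolicweight}. First I would analyse the section $s := {\rm res}^{\rm DH}_{D_i}(\Pp (\Ee ))$. By the construction of \S\ref{sec-preferred}, over $\aaa ^1$ the preferred section $\Pp (\Ee )$ is $\lambda \mapsto (\Ee ^{\lambda}_{a(\lambda )},\nabla ^{\lambda})$ for a fixed choice of KMS-chamber, patched across $\lambda = \infty$ with the analogous section on the conjugate chart (Mochizuki \cite[Ch.~11]{Mochizuki}). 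Composing with the bundle map ${\rm res}^{\rm DH}_{D_i}\colon M_{\rm DH}(X,\log D)\to T(1,\log )$ of \S\ref{sub-parabolic} produces a global section $s$ of $T(1,\log )$ over $\pp ^1$, which is $\sigma$-invariant by Lemma~\ref{residuesigma}; by the explicit description of \S\ref{sub-tate-antipodal} we may thus write $s = \psi (a,\alpha )$ for a unique $(a,\alpha )\in \rr \times \cc$. A different choice of KMS-chamber changes $(a,\alpha )$ by the $\zz (1,\log )$-action generated by $\psi (1,0)$, which is precisely the indeterminacy present in \eqref{globalsigmagauge}.

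Next I would identify $(a,\alpha )$ with the KMS-data of $\Ee$ along $D_i$. Evaluating $s$ at $\lambda = 0$: the Dolbeault fibre of $\Pp (\Ee )$ is the parabolic Higgs bundle $(\Ee ^0_{a(0)},\theta )$, and ${\rm res}_{D_i,0}$ reads off its Higgs residue, so $\alpha = {\rm res}(\theta ;D_i)$. To pin down the linear coefficient I would evaluate at $\lambda = 1$ and use Mochizuki's description of how the parabolic extension, its weight, and the residue of its flat connection are linked \cite[\S2.1.7, Ch.~11]{Mochizuki}: this identifies $a$ with the parabolic weight $b_i$ of $\Ee ^0$ along $D_i$, i.e.\ the growth exponent $|u|_h \sim |z|^{-b_i}$ of the harmonic metric near $D_i$. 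Hence $s = \psi (b_i,\, {\rm res}(\theta ;D_i))$.

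Finally I would feed this into Proposition~\ref{prop-parabolicweight}, obtaining in $\tfrac{\rr \times \cc}{(1,-p)\cdot \zz}$ the value
$$
(\varpi _p,{\rm res}_p)^{\Gg}_{D_i}(\Pp (\Ee )) = \bigl( b_i + {\rm res}(\theta ;D_i)\,\overline{p} + \overline{{\rm res}(\theta ;D_i)}\,p ,\; {\rm res}(\theta ;D_i) - b_i\,p - \overline{{\rm res}(\theta ;D_i)}\,p^2 \bigr).
$$
By the remark following Proposition~\ref{prop-parabolicweight} these two functions of $p$ are exactly Mochizuki's ${\mathfrak p}$ and ${\mathfrak e}$ for the KMS-structure of $\Ee$ at $D_i$ \cite[\S2.1.7]{Mochizuki}, which are the parabolic weight and the residue of the parabolic logarithmic $\lambda$-connection $\Ee ^{\lambda}$ at $\lambda = p$. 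For the residue coordinate there is also a direct check: ${\rm res}_p(s)$ is the standard-frame value of ${\rm ev}_p(s) = {\rm res}(\nabla ^p;D_i)\,\tfrac{\partial}{\partial \lambda}(p)$, namely ${\rm res}(\nabla ^p;D_i)$, which is by definition the residue of $\nabla ^p$ on $\Ee ^p_{a(p)}$. The compatibility with the gauge-group quotients is Lemma~\ref{residuegauge} together with the second part of Proposition~\ref{prop-parabolicweight}.

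The step I expect to be the main obstacle is the middle one: checking, with the conventions of this paper (no chosen $\sqrt{-1}$, the factor $2\pi$ in ${\rm exp}^{\perp}$, the sign twist of $\sigma$ on $T(1,\log )$, the orientation of KMS-chambers, and the sign coming from \eqref{residueformula}), that the pair $(a,\alpha )$ extracted from ${\rm res}^{\rm DH}_{D_i}(\Pp (\Ee ))$ genuinely coincides with $(b_i,{\rm res}(\theta ;D_i))$ in Mochizuki's normalisation, and that the holomorphic extension of this section across $\lambda = \infty$ is governed by the $-\lambda ^2$ glueing of $T(1,\log )$. This is where one must lean on Mochizuki \cite[Ch.~11, \S2.1.7, \S3.10.2]{Mochizuki} and Sabbah \cite[\S2.1.3]{Sabbah}.
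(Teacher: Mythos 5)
Your proposal is correct and follows essentially the same route as the paper: write ${\rm res}^{\rm DH}_{D_i}(\Pp (\Ee ))=\psi (a,\alpha )$, identify $(a,\alpha )$ with the parabolic weight and Higgs residue at $\lambda =0$ via Mochizuki's KMS transformation rule (Cor.\ 7.71), and conclude through the observed identity $(\varpi _p,{\rm res}_p)(\psi (a,\alpha ))=({\mathfrak p}(p,a,\alpha ),{\mathfrak e}(p,a,\alpha ))$ and the gauge-compatibility lemmas. The only (immaterial) difference is that the paper pins down $a=b_i$ by equating the residue polynomial $\alpha -a\lambda -\overline{\alpha}\lambda ^2$ with ${\mathfrak e}(\lambda ,a',\alpha ')$ for all $\lambda$, whereas you evaluate at $\lambda =0$ and $\lambda =1$; the normalization check you flag as the main obstacle is exactly what the paper settles by inspection of Mochizuki's formulas.
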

\begin{proof}
Fix an extension of the logarithmic Higgs bundle $(\Ee ^0,\nabla ^0)$ to a line bundle over $X$. 
It then has a harmonic metric $h$. Let $a'$ be the parabolic weight along $D_i$. 
Let $\alpha '$ be the residue of the Higgs field along $D_i$. Mochizuki defines functions ${\mathfrak p}(\lambda ,a,\alpha )$ and ${\mathfrak e}(\lambda ,a,\alpha )$
in \cite[\S 2.1.7]{Mochizuki}, and in Corollary 7.71, \cite[\S 7.3.3]{Mochizuki}
he points out that the rule obeyed by the KMS-spectrum of a harmonic bundle is given by the transformation 
$({\mathfrak p},{\mathfrak e})$. 
In the rank one case, the KMS-spectrum has only one
element. Hence, the transformation rule \cite[Cor. 7.71]{Mochizuki} means that the parabolic weight and residue of $\Ee ^{\lambda }$ are respectively
$$
{\mathfrak p}(\lambda ,a',\alpha ')\;\;\; \mbox{and} \;\;\; {\mathfrak e}(\lambda ,a',\alpha ').
$$

By inspection, the functions ${\mathfrak p},{\mathfrak e}$ of \cite[\S 2.1.7]{Mochizuki} are the same 
as the parabolic weight functions and residue functions occuring in Proposition \ref{prop-parabolicweight}:
\begin{equation}
\label{equmochi}
(\varpi _p,{\rm res}_p)(\psi (a,\alpha )) = ({\mathfrak p}(p,a,\alpha ), {\mathfrak e}(p,a,\alpha )). 
\end{equation}

Recall that  $\lambda \mapsto (\Ee ^{\lambda} ,\nabla ^{\lambda})$ is exactly our preferred section $\Pp (\Ee )$ (lifted over the gauge group action). 
Let $(a,\alpha )$ denote the Higgs coordinates for the residue section, so
$$
{\rm res}_{D_i}(\Pp (\Ee ))=\psi (a,\alpha ) \in \Gamma (\pp ^1,T(1,\log ))^{\sigma} \cong \rr \times \cc.
$$ 
The value of this section at the point $\lambda$, which is 
the residue of the logarithmic $\lambda$-connection $\nabla ^{\lambda}$, is given by the residue function 
${\rm res}_{\lambda}(\psi (a,\alpha ))$ calculated in \S \ref{sub-reseval} above. 
We conclude that for all $\lambda \in \aaa ^1$,
$$
{\rm res}_{\lambda}(\psi (a,\alpha )) = {\mathfrak e}(\lambda ,a',\alpha ').
$$
The identity \eqref{equmochi} between the functions ${\rm res}_{\lambda}(\psi (a,\alpha ))$ and
${\mathfrak e}(\lambda ,a,\alpha )$, writing them out per \S \ref{sub-reseval}, means that 
$$
\alpha - a\lambda  -\overline{\alpha}\lambda ^2 = \alpha ' - a'\lambda  -\overline{\alpha}'\lambda ^2
$$
for all $\lambda \in \aaa ^1$. It follows that $a=a'$ and $\alpha = \alpha '$. This proves the statement of the theorem at $p=\lambda = 0$. 

At a general value of $\lambda = p$, the parabolic weight and residue of the harmonic bundle are given as 
${\mathfrak p}(\lambda ,a,\alpha )$ and ${\mathfrak e}(\lambda ,a,\alpha )$ respectively, by
Corollary 7.71, \cite[\S 7.3.3]{Mochizuki}. The identity \eqref{equmochi} shows that these are the same
as $(\varpi _p,{\rm res}_p)_{D_i} (\Pp (\Ee ))$. Modulo the action of the gauge group (which absorbs our initial choice of extension of the bundle), 
this gives the statement of the theorem. 
\end{proof}

We now have enough to do half of the isomorphism in Theorem \ref{pref-id}.

\begin{corollary}
\label{pref-id-inj}
The preferred-sections morphism $\Pp$ in Theorem \ref{pref-id} is injective. 
\end{corollary}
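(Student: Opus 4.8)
The plan is to show that the preferred section $\Pp (\Ee )$ remembers enough of the tame harmonic bundle $\Ee$ to reconstruct it, by extracting from it the monodromy representation together with the local residue and parabolic data, and then appealing to the uniqueness in the nonabelian Hodge correspondence for tame harmonic bundles on open varieties.

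First I would reduce to a comparison of invariants. Suppose $\Ee _1,\Ee _2\in M_{\rm har}(U)$ satisfy $\Pp (\Ee _1)=\Pp (\Ee _2)$ in $\Gamma (\pp ^1,M_{\rm DH}(X,\log D))^{\sigma}/\Gg$. Lifting to $\sigma$-invariant sections of $M_{\rm DH}(X,\log D)$, we may arrange that the two lifts coincide after the action of a single element $g\in\Gg$ of the local meromorphic gauge group. Since such $g$ acts by elementary transformations along the $D_i$, which are isomorphisms over $U$ and hence do not alter the underlying harmonic bundle on $U$, it suffices to prove that two tame harmonic line bundles on $U$ are isomorphic as soon as their preferred sections agree.

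Next I would read off the monodromy and the residues. By the construction in \S \ref{sec-preferred}, the value of $\Pp (\Ee )$ at $\lambda =1$ is the bundle with connection $(\Ee ^{1},\nabla ^{1})$, whose underlying flat connection is $D'+D''$, the flat connection of the harmonic bundle; hence via the Riemann--Hilbert identification $M_{\rm DR}(U)\cong M_B(U)$ the section recovers the monodromy representation $\rho (\Ee )\in M_B(U)$, so $\rho (\Ee _1)=\rho (\Ee _2)$. Composing instead with the residue map ${\rm res}^{\rm DH}_{D_i}$ of Lemma \ref{residuesigma} and applying Theorem \ref{harmoniccompose} together with Proposition \ref{prop-parabolicweight}, the section $\Pp (\Ee )$ determines, for each divisor component $D_i$, the residue $\alpha _i$ of the Higgs field and the parabolic weight of $\Ee ^{\lambda}$ modulo the integer gauge shift of \eqref{residueformula}; in particular $\alpha _i(\Ee _1)=\alpha _i(\Ee _2)$ for all $i$. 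Equivalently, one may simply evaluate $\Pp (\Ee )$ at $\lambda =0$: by construction this is the class in $M_{\rm Dol}(U)=M_{\rm Dol}(X,\log D)/\Gg$ of the logarithmic Higgs line bundle $\Ee ^{0}$, i.e.\ the Dolbeault incarnation of $\Ee$ (a $\Gg$-orbit of logarithmic Higgs line bundles, equivalently a filtered logarithmic Higgs line bundle).

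To conclude, I would invoke the uniqueness in the nonabelian Hodge correspondence for tame harmonic bundles of rank one: by \cite{hbnc} for curves and \cite{Mochizuki} in general (see also \cite{Budur}), a tame harmonic line bundle on $U$ is determined up to isomorphism by its associated $\Gg$-orbit of parabolic logarithmic Higgs line bundles, equivalently by its monodromy representation together with the residues of its Higgs field along the $D_i$ (the parabolic weight modulo $\zz$, and part of each $\alpha _i$, being already pinned down by $\rho$). Since we have recovered precisely this data from $\Pp (\Ee _j)$, we get $\Ee _1\cong \Ee _2$, and $\Pp$ is injective. The main obstacle is making the last step precise: one must match the normalization conventions relating parabolic weights, residues of logarithmic $\lambda$-connections and the Deligne canonical extension, and exploit the rank-one simplifications (every object is stable, and for each $\lambda$ the KMS-spectrum along $D_i$ is a single $\zz$-orbit) so that the $\Gg$-quotient appearing on the Dolbeault side corresponds exactly to isomorphism classes of harmonic bundles on $U$. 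Once this bookkeeping — essentially already carried out in \S \ref{sec-preferred} and Theorem \ref{harmoniccompose} — is in place, injectivity is formal.
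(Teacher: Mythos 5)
Your main line is essentially the paper's own proof: from $\Pp (\Ee _1)=\Pp (\Ee _2)$ you read off the monodromy representation from the value at $\lambda =1$ and the local weight/residue data from Theorem \ref{harmoniccompose}, and then appeal to uniqueness in the tame correspondence. The paper does exactly this, phrasing the last step through filtered local systems: the Betti filtration weight is a gauge-invariant function of the parabolic weight and residue (third column of the table in \cite[p.~720]{hbnc}), in rank one a filtered local system is determined by its monodromy and filtration weight, and a filtered local system corresponds to a unique harmonic bundle. Your formulation ``determined by $\rho$ together with the Higgs residues $\alpha _i$'' is not literally the statement of the cited correspondences, but it reduces to it once one converts $(\rho ,\alpha _i)$ into the filtered local system via that table; this conversion is precisely the ``bookkeeping'' you defer, and it is what the paper's proof supplies.

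One sentence should be corrected, because it touches the central point of the paper: evaluating $\Pp (\Ee )$ at $\lambda =0$ does \emph{not} give ``equivalently a filtered logarithmic Higgs line bundle''. The fiber $M_{\rm Dol}(X,\log D)$ parametrizes logarithmic Higgs line bundles with no parabolic structure, so the $\Gg$-orbit of $\Ee ^0$ forgets the parabolic weight entirely; the weight is recovered only from the whole $\sigma$-invariant section, since it lies in the kernel of the evaluation map to a single fiber (Lemma \ref{exactatp}, \S \ref{sub-weighttwo}), which is exactly the weight two phenomenon of the title. If the proof rested on the $\lambda =0$ value alone it would have a genuine gap; your primary route, extracting the weight modulo $\zz$ via $\varpi _p$ and Theorem \ref{harmoniccompose} together with the monodromy at $\lambda =1$, is the correct one and coincides with the paper's argument.
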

\begin{proof}
Suppose $\Ee$ and $\Ff$ are rank one harmonic bundles, such that $\Pp (\Ee ) \cong \Pp (\Ff )$. The local parabolic weight and residue data of the harmonic
bundles coincide,
because these functions factor through $\Pp$ by Theorem \ref{harmoniccompose}. The line bundles with connection ($\lambda = 1$) associated to $\Ee$ and
$\Ff$ correspond to filtered local systems of rank $1$ \cite{hbnc} \cite{Mochizuki} \cite{GukovWitten}. The filtration weight of the filtered local system
is obtained from the parabolic weight and residues of the line bundles with connection, see the third column of the table in \cite[p. 720]{hbnc}. 
Therefore, the filtration weights of the filtered local systems associated to $\Ee$ and $\Ff$ are the same. The fact that
$\Pp (\Ee ) \cong \Pp (\Ff )$ at $\lambda = 1$ restricted over $\lambda = 1$ means 
that the associated logarithmic connections are the same up to local meromorphic gauge transformation, hence the associated monodromy representations are the same.
Now, in rank one a filtered local system is determined uniquely by its monodromy representation and its filtration weight. 
A filtered local system corresponds to a unique  harmonic bundle. Therefore $\Ee \cong \Ff$. 
\end{proof}

\subsection{Comparison with \cite{hbnc}}
\label{comp-hbnc}

It is interesting to comment on the particular cases $p=0$ and $p=1$. 
The transformation for going from $p=0$ to $p=1$ gives back the 
transformation between the first two columns of the table on p. 720 of \cite{hbnc}, 
which has remained mysterious to me up until now.  
The parabolic weights and residues for the
different points $\lambda = p$, are different coordinate systems on the same three dimensional space $\Gamma (\pp ^1, T(1,\log )) ^{\sigma}$.
Going between two different values of $p$ gives a change of coordinates. In \cite{hbnc} only the values $\lambda = 0$ (Higgs bundles) and $\lambda = 1$ 
(logarithmic connections)
were considered. However there are some changes of notation: we have adopted Mochizuki's coordinates \cite[\S 6.1.1]{Mochizuki} at the Higgs point $(a,\alpha )$ for the reader's convenience. 
We have also adopted the standard convention that the parabolic structure is indexed by an increasing filtration. 

In \cite{hbnc}, the parabolic structure was given by a decreasing filtration. 
So, here $a\in \rr$ is the parabolic weight of the Higgs bundle in the increasing sense, which corresponds to $-\alpha$ in the notation of \cite{hbnc}. 
In \cite{hbnc} the sheaf $E_{\alpha}$ corresponded to sections whose growth was bounded by $|z|^{\alpha}$ whereas here $E_a$ corresponds to 
sections whose growth is bounded by $|z|^{-a}$. 

And here, $\alpha$ is
the residue of the Higgs field, which was denoted by $b+ci$ on p. 720 of \cite{hbnc}. These coordinates coincide with our parabolic weight and residue
at $p=0$. The parabolic weight and residue at $p=1$ are given by the formulae \eqref{for-paraweight} \eqref{for-reseval}
$$
\varpi _1(a,\alpha ) = a+\alpha +\overline{\alpha}, \;\;\; {\rm res}_1 (a,\alpha ) = \alpha - \overline{\alpha}-a.
$$
In terms of the notation $(\alpha , b,c)$ of \cite{hbnc}---where $\alpha$ has a different meaning from the rest of the present paper, and where $i\in \cc$ is 
chosen---we get
$$
\varpi _1 = -\alpha + 2 b, \;\;\; {\rm res}_1 = \alpha + 2 i c.
$$
These are the values in the second column of the table on page 720 of \cite{hbnc}, taking into account that the ``jump'' there is $-\varpi _1$. 

The conclusion is that the three-dimensional space, and the coordinate
transformation in the table of \cite{hbnc}, come from the fact that the twistor bundle of residues is $T(1, \log )\cong \Oo _{\pp ^1}(2)$,
in other words the local monodromy around singular divisors is in a weight-two twistor bundle.

\subsection{The weight filtration}
\label{sec-weight}

Because we look at line bundles, the moduli spaces form groups under tensor product. For $M_{\rm DH}$ we get
a group structure relative to $\pp ^1$. Define the {\em weight filtration}:
$$
W_1M_{\rm DH}(X,\log D) := M_{\rm DH}(X)^o, \;\;\; W_2M_{\rm DH}(X,\log D) := M_{\rm DH}(X,\log D).
$$
In our arguments below, it has seemed most natural to use only the connected component of the identity representation in the
weight $1$ piece.  

Define the second graded piece as the quotient using the group structure
$$
Gr^W_2(M_{\rm DH}(X,\log D)) := W_2/W_1 = \frac{M_{\rm DH}(X,\log D)}{M_{\rm DH}(X)} .
$$
Using only the connected component $M_{\rm DH}(X)^o$ for $W_1$, leads to a nontrivial finite group as $Gr^W_2$ even in the 
compact case $D=\emptyset$.  

There is a version modulo the gauge group:
$$
Gr^W_2(M_{\rm DH}(U)) = \frac{M_{\rm DH}(U)}{M_{\rm DH}(X)^o} = \frac{Gr^W_2(M_{\rm DH}(X,\log D))}{\Gg} .
$$
In the second equality the quotient is taken in a stacky sense over $\lambda = 0,\infty$. 
The inclusion $M_{\rm DH}(X)^o\subset M_{\rm DH}(U)$ is strict, injective on each fiber over $\lambda \in \pp ^1$. 
There is also an induced weight filtration on the connected component given by 
$$
W_1M_{\rm DH}(X,\log D)^o := M_{\rm DH}(X)^o. 
$$

\begin{lemma}
\label{weight-compatible}
The exponential map giving an isomorphism in Theorem \ref{mhs-ident} is strictly compatible with the weight filtrations for connected components on both sides, in
other words it sends the usual weight filtration on $H^1(U,\cc )$ to the weight filtration on $M_{\rm DH}(X,\log D)^o$. 
\end{lemma}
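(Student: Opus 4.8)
The plan is to deduce everything from the cocycle description used in the proof of Theorem~\ref{mhs-ident}, combined with the functoriality of the twistor-bundle construction $\xi(-,F,\overline F)$ for sub-Hodge-structures. Recall that on $H^1(U,\cc)$ Deligne's weight filtration has $W_1H^1(U,\cc)$ equal to the image of $H^1(X,\cc)$ in $H^1(U,\cc)$, a pure Hodge structure of weight one, with $W_2H^1(U,\cc)=H^1(U,\cc)$; and $W_1H^1(U,\zz^\perp)=H^1(X,\zz^\perp)$, as already noted above. Since the inclusion $W_1H^1(U,\cc)\hookrightarrow H^1(U,\cc)$ is a morphism of mixed Hodge structures, strict for $F$ and $\overline F$, the construction $\xi$ yields a sub-twistor-bundle $\xi(W_1H^1(U,\cc),F,\overline F)\hookrightarrow\xi(H^1(U,\cc),F,\overline F)$ over $\pp^1$, and $(W_1H^1(U,\cc),F,\overline F)$ is nothing but $(H^1(X,\cc),F,\overline F)$.

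First I would identify the restriction of the exponential map of Theorem~\ref{mhs-ident} to this subbundle with the Deligne--Hitchin construction for the compact base $X$ of \S\ref{sec-deligneglue}. In the cocycle picture, a class in $W_1H^1(U,\cc)={\mathbb H}^1(\Omega^\bullet_X)$ is represented by a pair $(\{g_{ij}\},\{a_i\})$ in which each $a_i$ is a \emph{holomorphic} $1$-form on $X$ with no log poles, and the same persists under the $\lambda$-deformed cocycle relations defining $\xi(H^1(U,\cc),F)$. For such a cocycle the formula $(\lambda,\{g_{ij}\},\{a_i\})\mapsto(\lambda,L,\lambda d+a_i)$ from the proof of Theorem~\ref{mhs-ident} lands in $M_{\rm Hod}(X)\subset M_{\rm Hod}(X,\log D)$, the sub-locus of logarithmic $\lambda$-connections with vanishing residues; gluing with the conjugate chart, the exponential map restricted to $\xi(W_1H^1(U,\cc),F,\overline F)$ therefore factors through $M_{\rm DH}(X)\subset M_{\rm DH}(X,\log D)$ and coincides with the compact-base exponential map.

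Next I would invoke the compact-case computation and Lemma~\ref{uniglobal}: the exponential map $\xi(H^1(X,\cc),F,\overline F)\to M_{\rm DH}(X)^o$ is surjective with kernel $H^1(X,\zz^\perp)=W_1H^1(U,\zz^\perp)$, and this kernel equals the kernel of the full exponential map and lies inside $\xi(W_1H^1(U,\cc),F,\overline F)$. Hence the restriction induces an isomorphism
$$
\xi\bigl(W_1H^1(U,\cc),F,\overline F\bigr)/W_1H^1(U,\zz^\perp)\;\stackrel{\cong}{\longrightarrow}\;M_{\rm DH}(X)^o=W_1M_{\rm DH}(X,\log D)^o,
$$
and, since the kernel of the exponential map is contained in the subbundle, the preimage of $W_1M_{\rm DH}(X,\log D)^o$ is exactly $\xi(W_1H^1(U,\cc),F,\overline F)$; this is the correspondence of weight-one pieces. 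Passing to the quotient group structures relative to $\pp^1$, the induced map on $Gr^W_2$ is again an exponential map, now for the pure weight-two Hodge structure $Gr^W_2H^1(U,\cc)$ and with trivial kernel, hence an isomorphism. An isomorphism of filtered groups whose associated-graded map is an isomorphism is strict, so the weight filtrations correspond strictly.

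The main obstacle is the cocycle-level matching in the second step: one must know that ``weight one'' on the $H^1(U)$ side corresponds \emph{precisely} to ``extends to $X$ with vanishing residues'' on the moduli side, and that this survives the Rees/$\lambda$-deformation built into $\xi$. This rests on strictness of Deligne's weight filtration on the logarithmic de Rham complex (equivalently $E_1$-degeneration of the weight spectral sequence). I would either quote this directly, or avoid it by characterizing $M_{\rm DH}(X)^o\subset M_{\rm DH}(X,\log D)^o$ intrinsically as the locus where the residue morphism $R$ of \S\ref{sec-gauge} (equivalently the maps ${\rm res}^{\rm DH}_{D_i}$) vanishes on the connected component, checking that $\xi(W_1H^1(U,\cc),F,\overline F)$ maps into it, and deducing surjectivity onto $W_1M_{\rm DH}(X,\log D)^o$ by the flatness-and-dimension argument already used in proving Theorem~\ref{mhs-ident}. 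The stacky behaviour over $\lambda=0,\infty$ causes no trouble since we work throughout with connected components of the identity.
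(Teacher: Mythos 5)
Your argument is correct, and it reaches the same underlying fact as the paper but is organized differently. The paper's proof is a one-step appeal to the exact diagrams of \S\ref{sec-exact}: it observes that both weight filtrations are kernels of residue maps (the kernel of $R$ on the moduli side, the kernel of $H^1(U,\cc)\rightarrow\cc^k$ on the cohomology side), asserts that those exact diagrams extend to exact diagrams of twistor bundles over $\pp^1$ with $T(1,\log)$ in place of $\cc$, and concludes by exactness, since the exponential map is a middle vertical arrow intertwining the residues. You instead restrict the exponential map to the sub-twistor-bundle $\xi(W_1H^1(U,\cc),F,\overline{F})=\xi(H^1(X,\cc),F,\overline{F})$, identify that restriction with the compact-base exponential map via the cocycle description, and then use the compact-case exact sequence (as in Lemma \ref{uniglobal}) together with the location of the kernel $W_1H^1(U,\zz^{\perp})$ inside the subbundle to conclude that the preimage of $W_1M_{\rm DH}(X,\log D)^o=M_{\rm DH}(X)^o$ is exactly the subbundle; this buys a slightly more hands-on identification of the weight-one piece with the compact-case picture, at the cost of an extra identification step. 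Both routes hinge on the same hidden input, which you correctly isolate and the paper passes over silently when it says the diagrams ``extend to exact diagrams of bundles over $\pp^1$'': namely that sub-objects and kernels of these MHS morphisms pass to subbundles and kernels of the Rees/twistor bundles fiberwise, including at $\lambda=0,\infty$, which is exactly strictness of the residue morphism for the Hodge filtrations. One small imprecision: this is not ``$E_1$-degeneration of the weight spectral sequence'' (for the log de Rham complex the weight spectral sequence degenerates at $E_2$); the statement you actually need is strictness of the morphism of mixed Hodge structures ${\rm res}:H^1(U,\cc)\rightarrow\cc^k$ with respect to $F$ and $\overline{F}$, or else your alternative route (characterizing $M_{\rm DH}(X)^o$ inside the connected component as the vanishing locus of $R$, as in Proposition \ref{weight-exact}, plus the flatness-and-dimension count) which is essentially the paper's own mechanism and closes the gap without quoting degeneration results.
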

\begin{proof}
The exact diagrams in \S \ref{sec-exact} extend to exact diagrams of bundles over $\pp ^1$, with the Tate twistor structure $T(1,\log )$
inserted in place of $\cc$ at appropriate  places. The exponential map is one of the middle vertical maps in the exact squares. The weight foliation
is the kernel foliation of the map $R$, and the weight filtration on abelian cohomology is the kernel of the corresponding
map $H^1(U,\cc )\rightarrow \cc ^k$. Exactness then implies that the exponential isomorphism is compatible with weight filtrations. 
\end{proof}

The goal of this subsection is to identify $Gr^W_2M_{\rm DH}(X,\log D)$ and show that it has ``weight $2$''.

Write $NS(X)$ for the Neron-Severi group of divisors modulo algebraic equivalence, which is contained in
$H^2(X,\zz )$.
Let $NS(X,D)\subset NS(X)$ be the subgroup generated by divisor components $D_i$ of $D$. Let
$NS(X,D)^{\rm sat}$ be the saturation of this subgroup, in other words the subgroup of all elements $A\in NS(X)$ such
that some multiple $mA$ is in $NS(X,D)$. This may be seen as the kernel in the sequence
$$
0\rightarrow NS(X,D)^{\rm sat} \rightarrow H^2(X,\zz ) \rightarrow \frac{H^{1,1}(X,\cc)}{\cc \cdot [D_1] + \cc \cdot [D_k]} 
$$
and it includes the subgroup of torsion  $NS(X)^{\rm tors}$. For example if $D=\emptyset$ then 
$NS(X,D)^{\rm sat}  = NS(X)^{\rm tors}$. 
We close this paragraph by noting that 
$NS(X,D)^{\rm sat}$ is the preimage of $NS(U)^{\rm tors}$ under the restriction map $NS(X)\rightarrow NS(U)$, and
\begin{equation}
\label{nsutors}
\frac{NS(X,D)^{\rm sat}}{NS(X,D)} = NS(U)^{\rm tors}.
\end{equation}

Putting together the residue maps at divisor components $D_1,\ldots , D_k$, we get a map 
$$
R: M_{\rm DH}(X,\log D)\rightarrow T(1,\log ) ^k.
$$
The condition \eqref{c1cond} on $c_1(L)$ for a point $(L,\nabla )\in M_{\rm DH}(X,\log D)$ is equivalent to saying that $c_1(L)\in NS(X,D)^{\rm sat}$.
These give a map
$$
(c_1,R): M_{\rm DH}(X,\log D)\rightarrow NS(X,D)^{\rm sat}\times T(1,\log )^k.
$$

Consider the map defined using the divisor components
$$
\Sigma :\cc ^k \rightarrow H^{1,1}(X,\cc ), \;\;\; (a_1,\ldots , a_k)\mapsto \sum a_i[D_i].
$$
The cohomology $H^2(X,\cc )$ has a pure weight two Hodge structure, whose twistor bundle is semistable of slope $2$. 
The twistor bundle $\xi (H^2(X,\cc ), F,\overline{F})$ 
has a natural subbundle corresponding to $H^{1,1}$, and since that space is pure of Hodge type $(1,1)$ we have a
natural isomorphism 
$$
\xi (H^{1,1}(X,\cc ), F,\overline{F}) \cong H^{1,1}(X,\cc )\otimes  T(1,\log ).
$$
With respect to these constructions, the map $\Sigma$ extends over $\pp ^1$ to give a map
$$
\Sigma _{\rm DH}: T(1,\log )^k \rightarrow H^{1,1}(X,\cc )\otimes T(1,\log ).
$$
There is a natural morphism of groups over $\pp ^1$, 
$$
\Lambda _{\rm DH}: NS(X)\times \pp ^1\rightarrow H^{1,1}(X,\cc )\otimes T(1,\log )\cong
\xi (H^{1,1}X,\cc ), F,\overline{F})
$$
which sends an element of $NS(X)$ to a section which has a simple pole at $0$ and another simple pole at $\infty$. 
In terms of the usual trivialization of $T(1,\log )$ over $\aaa ^1$, this corresponds to multiplying by $\lambda$ the 
usual map from $NS(X)$ to $H^{1,1}(X,\cc )$. 
Restrict it to the subgroup $NS(X,D)^{\rm sat}$. Adding to $\Sigma _{\rm DH}$ gives a morphism
$$
\Lambda _{\rm DH} + \Sigma _{\rm DH}: NS(X,D)^{\rm sat}\times T(1,\log )^k \rightarrow H^{1,1}(X,\cc )\otimes T(1,\log ).
$$
The Chern classes and residues of logarithmic $\lambda$-connections on line bundles $(L,\nabla )$ satisfy the condition \eqref{c1compatible},
which in terms of the present notation says that 
the composed map $(\Lambda _{\rm DH} + \Sigma _{\rm DH})\circ (c_1,R)$ is zero. 

The following exact sequence is an analogue of the basic exact sequence \eqref{basicexact} and following diagram in \S \ref{sec-exact}.

\begin{proposition}
\label{weight-exact}
The weight filtrations for $M_{\rm DH}(X,\log D)$ and $M_{\rm DH}(U)$ with group structure given by tensor product, 
fit into a strict exact sequence of analytic groups over $\pp ^1$
\begin{equation}
\label{exseq}
1\rightarrow M_{\rm DH}(X)^o\rightarrow M_{\rm DH}(X,\log D) \stackrel{(c_1,R)}{\longrightarrow} 
NS(X,D)^{\rm sat}\times T(1,\log )^k \cdots 
\end{equation}
$$
\cdots \stackrel{\Lambda _{\rm DH} + \Sigma _{\rm DH}}{\longrightarrow} 
H^{1,1}(X,\cc )\otimes T(1,\log ).
$$
\end{proposition}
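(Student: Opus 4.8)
The plan is to verify the sequence \eqref{exseq} fibrewise over $\pp ^1$ and then promote fibrewise exactness to exactness as a complex of analytic group schemes over $\pp ^1$, using that all four terms are smooth (hence flat) over $\pp ^1$: for the first two this is the smoothness lemma of \S \ref{sec-hodge} together with the glueing, for the last two it is immediate. Because the glueing is modelled on the antipodal involution, it suffices to work over the standard chart $\aaa ^1$, with the fibres over $\Gm$ dense; everything is then transported to the chart at $\infty$ by the conjugation isomorphism $\varphi$. The first easy observation is that the composite $(\Lambda _{\rm DH}+\Sigma _{\rm DH})\circ (c_1,R)$ vanishes: over $\lambda =p\ne 0$ this is literally \eqref{c1compatible}, and a morphism of bundles over $\pp ^1$ vanishing on a dense open set vanishes identically.

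Next I would identify the kernel of $(c_1,R)$. Over $p\in \aaa ^1$, a point $(L,\nabla )$ of $M_{\rm Hod}(X,\log D)_p$ lies in this kernel exactly when $c_1(L)=0$ in $NS(X)$ and every residue ${\rm res}(\nabla ;D_i)$ vanishes; but then $\nabla $ has no poles, so it is a genuine $p$-connection on a bundle $L\in {\rm Pic}^0(X)$, i.e.\ a point of $M_{\rm Hod}(X)_p$, and this is automatically connected because ${\rm Pic}^0(X)$ and the affine space of $p$-connections on a fixed bundle both are. The same reasoning at $\lambda =0$ uses the residue sequence $0\to \Omega ^1_X\to \Omega ^1_X(\log D)\to \bigoplus _i\Oo _{D_i}\to 0$ to see that a logarithmic Higgs field with vanishing residues is a section of $\Omega ^1_X$. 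Injectivity of $M_{\rm DH}(X)^o\hookrightarrow M_{\rm DH}(X,\log D)$ is clear. Since by definition $M_{\rm DH}(X)^o=W_1M_{\rm DH}(X,\log D)$, this already gives the strictness statement: $(c_1,R)$ induces an injection of $Gr^W_2M_{\rm DH}(X,\log D)$ into the pure weight-two bundle $NS(X,D)^{\rm sat}\times T(1,\log )^k$ (the first factor being a Tate-type constant group, the second the Tate twistor bundle).

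The one step with genuine content is exactness at $NS(X,D)^{\rm sat}\times T(1,\log )^k$: given $(A;v_1,\ldots ,v_k)$ over $p$ in the kernel of $\Lambda _{\rm DH}+\Sigma _{\rm DH}$, I would choose $L_0\in {\rm Pic}(X)$ with $c_1(L_0)=A$ (possible since $NS(X,D)^{\rm sat}\subseteq NS(X)={\rm im}(c_1)$), write $v_i=r_i\frac{\partial}{\partial \lambda}$ in the standard frame, and observe that the kernel condition becomes $p[A]+\sum _ir_i[D_i]=0$ in $H^{1,1}(X,\cc )$. This is exactly the classical degree obstruction to the existence of a logarithmic $p$-connection on $L_0$ with prescribed residues $r_i$: for $p\ne 0$ it is $c_1(L_0)=-\sum r_i[D_i]$ after normalising to $\lambda =1$, and for $p=0$ it is $\sum r_i[D_i]=0$, the image condition in the residue sequence above. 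So the desired $(L,\nabla )$ exists, giving fibrewise surjectivity onto the kernel. The main obstacle I anticipate is bookkeeping here: keeping the identifications $\cc \cong \cc ^{\perp}$ and the standard framing of $T(1,\log )$ straight so that the weight-two twist in $H^{1,1}(X,\cc )\otimes T(1,\log )$ matches \eqref{c1compatible} with the correct signs, and checking that the realization of $(L,\nabla )$ can be made compatibly over $\pp ^1$ rather than merely fibrewise — this last point follows formally from smoothness of $M_{\rm Hod}(X,\log D)$ over $\aaa ^1$ and a dimension count, exactly as in the diagram chase of \S \ref{sec-exact}. Finally, dividing the whole sequence by the properly discontinuous action of $\Gg$ (trivial over $\lambda =0,\infty$) yields the corresponding strict exact sequence for $M_{\rm DH}(U)$.
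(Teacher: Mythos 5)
Your reductions---vanishing of the composite via \eqref{c1compatible}, identification of $\ker (c_1,R)$ with $M_{\rm DH}(X)^o$ using connectedness of ${\rm Pic}^0(X)$, and working fibrewise over $\aaa ^1$---all match the paper's proof. The problem is the one step you yourself flag as having ``genuine content'': for $p\neq 0$ you settle exactness at $NS(X,D)^{\rm sat}\times T(1,\log )^k$ by appealing to ``the classical degree obstruction'', i.e.\ by asserting that the vanishing of $p\,[A]+\sum _i r_i[D_i]$ in $H^{1,1}(X,\cc )$ is known to be \emph{sufficient} for the existence of a logarithmic $p$-connection on the chosen bundle $L_0$ with the prescribed residues $r_i$. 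That sufficiency is classical for line bundles on a curve, where it is a statement about degrees; in higher dimension, where the condition lives in $H^{1,1}(X,\cc )$ and $c_1(L_0)$ is only assumed to lie in the saturation $NS(X,D)^{\rm sat}$, it is precisely the content of the proposition and cannot simply be quoted. Your residue-sequence argument genuinely covers only $p=0$: for $p\neq 0$, adding a global logarithmic $1$-form merely shifts the residues of a logarithmic connection that you must already possess on $L_0$, and producing that initial connection is the missing idea.

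The paper closes exactly this hole by a two-step construction. First, by \eqref{nsutors} an element of $NS(X,D)^{\rm sat}$ restricts to a torsion class on $U$, and a line bundle with torsion Chern class carries a flat regular singular connection, which in rank one is automatically logarithmic; rescaling by $p$ gives an initial logarithmic $p$-connection $\nabla '$ on $L_0$ with some residues $a'_i$. Second, since $(L_0,\nabla ')$ itself satisfies \eqref{c1compatible}, subtracting from the kernel condition gives $\sum _i (r_i-a'_i)[D_i]=0$ in $H^{1,1}(X,\cc )$, so the residue exact sequence $0\to \Omega ^1_X\to \Omega ^1_X(\log D)\to \oplus _i\Oo _{D_i}\to 0$ produces a global (automatically closed) logarithmic $1$-form $\beta$ with residues $r_i-a'_i$, and $\nabla := \nabla ' +\beta$ realizes $(\zeta ,(r_1,\ldots ,r_k))$. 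One could instead argue on the Betti side (find a rank one local system on $U$ with the local monodromies determined by the $r_i/p$, take a Deligne-type extension with the prescribed residues, and twist by a flat line bundle on $X$ to correct the underlying bundle, whose discrepancy with $L_0$ has torsion Chern class), but some such argument is indispensable; as written, the surjectivity step for $p\neq 0$ is asserted rather than proved.
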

\begin{proof}
It suffices to prove this in the fiber over a fixed $\lambda$, and we may assume $\lambda \in \aaa ^1$. 
Injectivity on the left is easy and was mentionned previously: given $(L,\nabla )$ and $(L',\nabla ')$ on $X$,
an isomorphism of logarithmic $\lambda$-connections on $(X,D)$ between them is also an isomorphism of $\lambda$-connections on $X$. 

For exactness at $M_{\rm DH}(X,\log D)$, suppose $(L,\nabla )$ is a logarithmic $\lambda$-connection such that $c_1(L)=0$ and 
$R(\nabla )=0$, that is ${\rm res}_{D_i}(\nabla )= 0$ for each component $D_i$. Then $\nabla$ is a connection over $X$ and 
even if $\lambda = 0$, the condition $c_1=0$ insures inclusion in $M_{\rm DH}(X)$. The fact that $c_1=0$ in the Neron-Severi
group means that $L$ is algebraically equivalent to $0$. From the structure of the Picard group this implies that $L$ is
in the connected component of the trivial bundle and $M_{\rm DH}(X)_{\lambda}\rightarrow Pic(X)$ is smooth, so $(L,\nabla )$
is in the connected component $M_{\rm DH}(X)^o_{\lambda}$.

We prove exactness at $T(1,\log )^k \times NS(X,D)^{\rm sat}$. 
Use the standard frame for $T(1,\log )$. 
A point in $\ker (\Lambda _{\rm DH} + \Sigma _{\rm DH})$ is $\zeta \in NS(X,D)^{\rm sat}$ together with a $k$-uple $(a_1,\ldots , a_k)\in \cc ^k$ 
such that 
$$
\lambda \zeta + \sum a_i[D_i] = 0 \;\;\; \mbox{in}\;\; H^{1,1}(X,\cc ).
$$
Choose a line bundle $L$ such that 
$c_1(L) = \zeta $. 
The elements of $NS(X,D)^{\rm sat}$ restrict to torsion elements on $U$ by \eqref{nsutors}. 
Line bundles whose Chern class are torsion, have flat regular singular connections, which in the rank $1$ case are automatically 
logarithmic. Thus we can choose an initial $\lambda$-connection $\nabla '$ on $L$
logarithmic with respect to $(X,D)$. Let $a'_i$ denote the residues of $\nabla '$ along $D_i$. Then 
$$
\sum (a_i-a'_i)[D_i] = 0 \;\; \;\mbox{in}\;\; H^{1,1}(X,\cc ).
$$
Hence there is a logarithmic one-form $\beta$ on $(X,D)$ having residues $a_i-a'_i$ along $D_i$. 
Now $\nabla = \nabla ' + \beta $ is a logarithmic $\lambda$-connection 
with $(c_1,R)(L,\nabla ) = (\zeta , (a_1,\ldots , a_k))$.
\end{proof}

\begin{corollary}
\label{grw2-ident}
The exact sequence \eqref{exseq} identifies the graded piece of the weight filtration as
$$
Gr^W_2 M_{\rm DH}(X,\log D)= 
$$
$$
\ker \left( NS(X,D)^{\rm sat} \times T(1, \log ) ^k  \stackrel{\Lambda _{\rm DH} + \Sigma _{\rm DH} }{\longrightarrow}  
H^{1,1}(X,\cc )\otimes T(1,\log ) \right) .
$$
\end{corollary}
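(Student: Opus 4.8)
The plan is to read off Corollary~\ref{grw2-ident} from the strict exact sequence of Proposition~\ref{weight-exact} by a single application of the first isomorphism theorem, once the definition of the weight filtration from \S\ref{sec-weight} has been unwound. Recall that by definition $Gr^W_2 M_{\rm DH}(X,\log D) = W_2/W_1 = M_{\rm DH}(X,\log D)/M_{\rm DH}(X)^o$, the quotient being formed with respect to the tensor-product group structure relative to $\pp^1$. So there is really nothing to invent: I only have to assemble three facts, each already in hand from Proposition~\ref{weight-exact}.

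First, the left-hand portion of \eqref{exseq} says exactly that the morphism $(c_1,R)$ has kernel $M_{\rm DH}(X)^o$ --- this is the injectivity on the left together with exactness at $M_{\rm DH}(X,\log D)$ established there. Second, since \eqref{exseq} is a \emph{strict} exact sequence of analytic groups over $\pp^1$, the morphism $(c_1,R)$ factors through an isomorphism of analytic groups over $\pp^1$
$$
M_{\rm DH}(X,\log D)/M_{\rm DH}(X)^o \;\stackrel{\cong}{\longrightarrow}\; \mathrm{image}\,(c_1,R)\subset NS(X,D)^{\rm sat}\times T(1,\log)^k.
$$
Here it matters that $M_{\rm DH}(X)^o$ is an honest analytic group: the stack phenomena over $\lambda = 0,\infty$ only appear after dividing by the gauge group $\Gg$, so $W_2/W_1$ is a well-behaved quotient and the universal property applies. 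Third, exactness of \eqref{exseq} at the middle term $NS(X,D)^{\rm sat}\times T(1,\log)^k$ identifies this image with $\ker\bigl(\Lambda_{\rm DH}+\Sigma_{\rm DH}\bigr)$: the composite $(\Lambda_{\rm DH}+\Sigma_{\rm DH})\circ(c_1,R)$ vanishes by \eqref{c1compatible}, and the surjectivity onto the kernel was produced in the proof of Proposition~\ref{weight-exact} by choosing a line bundle with the prescribed Chern class, equipping it with a logarithmic $\lambda$-connection (using that elements of $NS(X,D)^{\rm sat}$ restrict to torsion classes on $U$ by \eqref{nsutors}), and adjusting the residues by a logarithmic one-form. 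Chaining the three gives the displayed description of $Gr^W_2 M_{\rm DH}(X,\log D)$.

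The one point that deserves care, and which I expect to be the only real obstacle, is the meaning of ``strict exact sequence of analytic groups over $\pp^1$'' and the license it gives for the first-isomorphism-theorem step to hold uniformly over the whole base, including the fibres over $\lambda = 0$ and $\lambda = \infty$ where the connected-component conventions (the superscript $o$) and the degeneration of \S\ref{sub-gauge} are operative. As in the proof of Theorem~\ref{mhs-ident}, the clean way to handle this is to argue fibrewise --- over each chart $\aaa^1$ the statement is precisely the basic exact sequence \eqref{basicexact} and the diagram following it in \S\ref{sec-exact}, transported through the standard trivialization of $T(1,\log)$ --- and then to invoke smoothness of both $M_{\rm DH}(X,\log D)$ and $M_{\rm DH}(X)^o$ over $\pp^1$, together with flatness of the relevant kernel over $\pp^1$, to glue the fibrewise isomorphisms into a global one. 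Once that is granted there is nothing further to verify.
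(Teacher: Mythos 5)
Your proposal is correct and matches the paper, which gives no separate argument for this corollary precisely because it is the immediate consequence of Proposition \ref{weight-exact} that you spell out: $Gr^W_2 = M_{\rm DH}(X,\log D)/M_{\rm DH}(X)^o$ by definition, the sequence \eqref{exseq} identifies $M_{\rm DH}(X)^o$ as $\ker (c_1,R)$, and exactness at the middle term identifies the image of $(c_1,R)$ with $\ker (\Lambda_{\rm DH}+\Sigma_{\rm DH})$. The fibrewise/strictness care you add is already built into the proof of the proposition (which argues fiber by fiber over $\lambda$), so nothing further is needed.
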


We can now describe the weight two phenomenon in the title of the paper:

\begin{lemma}
\label{weight2}
There is $b$ such that
$$
\ker \left( T(1,\log ) ^k \rightarrow H^{1,1}(X,\cc )\otimes T(1,\log )  \right) \cong T(1,\log )^b.
$$
There is an exact sequence
$$
0 \rightarrow T(1,\log )^b \rightarrow Gr^W_2 M_{\rm DH}(X,\log D) \rightarrow NS(X,D)^{\rm sat} \rightarrow 0.
$$
On the connected component of the identity representation
$$
T(1,\log )^b \cong Gr^W_2 M_{\rm DH}(X,\log D)^o  .
$$
Modulo the gauge group we have
$$
0 \rightarrow \Gm (1)^b \rightarrow Gr^W_2 M_{\rm DH}(U) \rightarrow NS(U)^{\rm tors} \rightarrow 0.
$$
\end{lemma}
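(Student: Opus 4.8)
The plan is to derive all four assertions from Corollary \ref{grw2-ident} and Proposition \ref{weight-exact}, exploiting throughout that $T(1,\log)=T\pp^1$ is a line bundle, isomorphic to $\Oo_{\pp^1}(2)$.

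First I would treat the first assertion and the exact sequence together. Since each $[D_i]$ lies in $H^{1,1}(X,\cc)$, the map $\Sigma:\cc^k\to H^{1,1}(X,\cc)$, $(a_i)\mapsto\sum a_i[D_i]$, is $\cc$-linear; set $b:=\dim_\cc\ker\Sigma$. As $T(1,\log)$ is a line bundle, $\Sigma_{\rm DH}$ is just $\Sigma\otimes\mathrm{id}_{T(1,\log)}$, so its kernel is $(\ker\Sigma)\otimes T(1,\log)\cong T(1,\log)^b$; this is the first assertion. For the exact sequence, Corollary \ref{grw2-ident} identifies $Gr^W_2 M_{\rm DH}(X,\log D)$ with $\ker(\Lambda_{\rm DH}+\Sigma_{\rm DH})\subset NS(X,D)^{\rm sat}\times T(1,\log)^k$, and I would project this to $NS(X,D)^{\rm sat}$. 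The fibre over $0$ of the projection is exactly $\ker\Sigma_{\rm DH}=T(1,\log)^b$. Surjectivity: given $\zeta\in NS(X,D)^{\rm sat}$, saturation provides $m\geq1$ with $m\zeta\in NS(X,D)$, so the image of $\zeta$ in $H^{1,1}(X,\cc)$ is $\sum c_i[D_i]$ for suitable $c_i\in\cc$; since the radial field $\lambda\frac{\partial}{\partial\lambda}$ is a genuine global section of $T(1,\log)$ over $\pp^1$ and $\Lambda_{\rm DH}(\zeta)$ is multiplication by $\lambda$ of the class of $\zeta$ in the standard frame, the pair $\bigl(\zeta,(-\lambda c_i\frac{\partial}{\partial\lambda})_i\bigr)$ lies in $\ker(\Lambda_{\rm DH}+\Sigma_{\rm DH})$ and projects to $\zeta$. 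This yields the sequence.

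For the statement on connected components, I would note that $NS(X,D)^{\rm sat}$ is discrete, hence the identity component of $Gr^W_2 M_{\rm DH}(X,\log D)$ is the kernel $T(1,\log)^b$ of the projection just constructed; and $Gr^W_2 M_{\rm DH}(X,\log D)^o=M_{\rm DH}(X,\log D)^o/M_{\rm DH}(X)^o$ is open, closed and connected in $Gr^W_2 M_{\rm DH}(X,\log D)$ and contains the identity, so it coincides with that identity component. Alternatively, as in the proof of Proposition \ref{weight-exact}, one checks that $(c_1,R)$ restricts to a surjection $M_{\rm DH}(X,\log D)^o\twoheadrightarrow T(1,\log)^b$ with kernel $M_{\rm DH}(X)^o$, the nonempty fibres being $M_{\rm DH}(X)^o$-torsors over the connected base $T(1,\log)^b$ admitting a continuous section.

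Finally, for the version modulo the gauge group I would divide the exact sequence of the second assertion by $\Gg=\zz^k$, recalling $Gr^W_2 M_{\rm DH}(U)=Gr^W_2 M_{\rm DH}(X,\log D)/\Gg$. By \eqref{c1formula} the induced action on the quotient $NS(X,D)^{\rm sat}$ is translation by the subgroup $NS(X,D)$ generated by the $[D_i]$, so by \eqref{nsutors} the cokernel there is $NS(U)^{\rm tors}$; by \eqref{residueformula} the residual action on the sub-bundle $T(1,\log)^b$ is translation by the integer subgroup $\zz(1,\log)^b$, whose quotient is $\Gm(1)^b$ by \eqref{gm1}. Assembling these gives $0\to\Gm(1)^b\to Gr^W_2 M_{\rm DH}(U)\to NS(U)^{\rm tors}\to0$. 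I expect this last step to be the main obstacle: one must verify that the embedding $\Gg\hookrightarrow Gr^W_2 M_{\rm DH}(X,\log D)$ is compatible with the projection to $NS(X,D)^{\rm sat}$ and induces precisely the two stated translation actions, which is where the torsion of $NS(X)$ and the stacky behaviour over $\lambda=0,\infty$ demand care; everything else is linear algebra over the line bundle $T(1,\log)$.
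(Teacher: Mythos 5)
Your proposal is correct and follows essentially the same route as the paper: identify $Gr^W_2 M_{\rm DH}(X,\log D)$ with $\ker (\Lambda _{\rm DH}+\Sigma _{\rm DH})$ via Proposition \ref{weight-exact}/Corollary \ref{grw2-ident}, project to $NS(X,D)^{\rm sat}$ using that saturated classes are rational combinations of the $[D_i]$, interchange $Gr^W_2$ with the identity component, and divide by $\Gg$ via the sequence $0\to \zz ^b\to \zz ^k\to NS(X,D)\to 0$ together with \eqref{nsutors}. The only (harmless) deviation is that you get the first claim from the concrete identification $\Sigma _{\rm DH}=\Sigma \otimes \mathrm{id}_{T(1,\log )}$ on a line bundle, where the paper invokes the general fact that a kernel of a map of pure weight-two twistor structures is again pure of weight two; your extra explicitness (the lift $(\zeta ,(-\lambda c_i\partial /\partial \lambda )_i)$ and the open--closed component argument) simply fills in steps the paper leaves terse.
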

\begin{proof}
A map between pure twistor structures of weight $2$ has a kernel which is again a pure twistor structure of weight $2$. 
Hence, there is $b$ as in the first claim. The first exact sequence comes from Proposition \ref{weight-exact} and
the fact that every element of $NS(X,D)^{\rm sat}$ goes into $H^{1,1}(X,\cc )$ to something which comes from $\cc ^k$. 
For the second exact sequence, note that 
$$
Gr^W_2 \left( M_{\rm DH}(X,\log D)^o\right) = \left( Gr^W_2 M_{\rm DH}(X,\log D)^o\right)
$$
because $W_1M_{\rm DH}(X,\log D)^o = M_{\rm DH}(X,\log D)$. 

For the last exact sequence, divide out by the gauge group $\Gg = \zz ^k$, which means dividing the first exact sequence by the exact sequence
$$
0\rightarrow \zz ^b \rightarrow \zz ^k \rightarrow NS(X,D)\rightarrow 0.
$$
Equation \eqref{nsutors} identifies the quotient of $NS(X,D)^{\rm sat}$ by $NS(X,D)$ with $NS(U)^{\rm tors}$.
\end{proof}

The weight equivalence relation induces an equivalence relation on sections: two sections are equivalent if and only if
their values are equivalent over each $\lambda \in \pp^1$. For this discussion, we work modulo the gauge group with $M_{\rm DH}(U)$. 

\begin{lemma}
\label{sectionsquotient}
There is a finite abelian group $K$ and an exact sequence
$$
0\rightarrow 
\Gamma (\pp ^1, T(1,\log )^b)^{\sigma}\rightarrow 
Gr ^W_2 \Gamma (\pp ^1, M_{\rm DH}(U))^{\sigma} \rightarrow K \rightarrow 0 .
$$
For any $p\in \aaa ^1$ the parabolic weight and residue give an exact sequence
$$
0\rightarrow 
\left( \frac{\rr\times\cc}{(1,-p)\zz} \right) ^b  \rightarrow 
Gr ^W_2 \Gamma (\pp ^1, M_{\rm DH}(U))^{\sigma} \rightarrow
K \rightarrow 0 .
$$
\end{lemma}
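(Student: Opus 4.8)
The plan is to deduce both statements from the last exact sequence of Lemma~\ref{weight2}, which modulo the gauge group reads
$$0 \rightarrow \Gm (1)^b \rightarrow Gr^W_2 M_{\rm DH}(U) \rightarrow NS(U)^{\rm tors} \rightarrow 0,$$
by applying $\sigma$-invariant global sections over $\pp ^1$ and then reading the resulting kernel in two coordinate systems. First I would compute $\Gamma (\pp ^1,\Gm (1)^b)^{\sigma}$. Since $\Gm (1) = T(1,\log )/\zz (1,\log )$ and the generator $-\lambda \frac{\partial}{\partial \lambda}$ of $\zz (1,\log )$ is locally nonzero at every point of $\pp ^1$, the subsheaf of $T(1,\log )$ that it generates is the constant sheaf $\zz _{\pp ^1}$; together with $H^1(\pp ^1,\zz )=0$, $H^1(\pp ^1,T(1,\log )^b)=0$, and the harmlessness of the stacky quotient over $\lambda =0,\infty$ (exactly as in \S\ref{sub-tate-integer} and \S\ref{sub-gauge}), this yields
$$\Gamma (\pp ^1,\Gm (1)^b)^{\sigma} = \Gamma (\pp ^1,T(1,\log )^b)^{\sigma}/\zz (1,\log )^b .$$
By the calculation of \S\ref{sub-tate-antipodal} this group is $\bigl((\rr \times \cc )/(1,0)\zz \bigr)^b$ in the $\psi$-coordinates; applying instead, in each of the $b$ factors, the isomorphism $(\varpi _p,{\rm res}_p)$ of Proposition~\ref{prop-parabolicweight} --- which descends to $\Gamma (\pp ^1,T(1,\log ))^{\sigma}/\zz (1,\log )\cong (\rr \times\cc )/(1,-p)\zz$ --- rewrites the same group as $\bigl((\rr \times\cc )/(1,-p)\zz \bigr)^b$. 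These are the left-hand terms of the first and of the second asserted sequence.

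Applying $\Gamma (\pp ^1,-)^{\sigma}$ to the displayed sequence of Lemma~\ref{weight2}, the kernel of the induced map on sections is $\Gamma (\pp ^1,\Gm (1)^b)^{\sigma}$ by the above, and I would let $K$ be the cokernel of its map into $\Gamma (\pp ^1,NS(U)^{\rm tors})^{\sigma}$. As $NS(U)^{\rm tors}$ is finite and enters as a constant sheaf, $\Gamma (\pp ^1,NS(U)^{\rm tors})^{\sigma} = (NS(U)^{\rm tors})^{\sigma}$ is finite, so $K$ is a finite abelian group and
$$0 \rightarrow \Gamma (\pp ^1,\Gm (1)^b)^{\sigma} \rightarrow \Gamma (\pp ^1,Gr^W_2 M_{\rm DH}(U))^{\sigma} \rightarrow K \rightarrow 0$$
is exact by construction. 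What remains to be checked is that $Gr^W_2\Gamma (\pp ^1,M_{\rm DH}(U))^{\sigma}$ --- the image of $\Gamma (\pp ^1,M_{\rm DH}(U))^{\sigma}$ under passage to the weight grading, a priori only a subgroup of $\Gamma (\pp ^1,Gr^W_2 M_{\rm DH}(U))^{\sigma}$ --- contains the full kernel $\Gamma (\pp ^1,\Gm (1)^b)^{\sigma}$; equivalently, that every $\sigma$-invariant section of $\Gm (1)^b\subset Gr^W_2 M_{\rm DH}(U)$ lifts to a $\sigma$-invariant section of $M_{\rm DH}(U)$.

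For this, the obstruction to first lifting to a (not necessarily $\sigma$-invariant) section of $M_{\rm DH}(U)$ lies in $H^1(\pp ^1,W_1 M_{\rm DH}(U))^{\sigma} = H^1(\pp ^1,M_{\rm DH}(X)^o)^{\sigma}$, which by the exponential exact sequence for the compact case used in the proof of Lemma~\ref{uniglobal} (with $H^1(\pp ^1,\Oo _{\pp ^1}(1)^a)=0$ and $H^2$ over $\pp ^1$ of a constant sheaf being that sheaf) is a sublattice of $H^1(X,\zz ^{\perp})$, hence free abelian; and the obstruction to then correcting the lift to be $\sigma$-invariant lies in a $\zz /2$-group cohomology group of $\Gamma (\pp ^1,M_{\rm DH}(X)^o)$, which is a finite $2$-group. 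Both obstructions are group homomorphisms out of $\Gamma (\pp ^1,\Gm (1)^b)^{\sigma}$, which is divisible, being a quotient of $\Gamma (\pp ^1,T(1,\log )^b)^{\sigma}\cong \rr ^{3b}$; since their targets have no nonzero divisible subgroup, both obstruction homomorphisms vanish and the lifting holds. This completes the first exact sequence, and the second is obtained from it by the coordinate change of the first paragraph, with the same $K$. I expect this lifting step --- identifying the obstruction groups precisely enough to see they contain no nonzero divisible subgroup, while keeping careful track of the stacky structure of $M_{\rm DH}(U)$ and $\Gm (1)$ over $\lambda =0,\infty$ throughout --- to be the main obstacle; the remainder is a routine chase through the long exact sheaf- and $\zz /2$-cohomology sequences, in which the connecting maps out of the finite group $NS(U)^{\rm tors}$ and the divisible group $\Gamma (\pp ^1,\Gm (1)^b)^{\sigma}$ into the torsion-free and finite groups that appear all vanish automatically.
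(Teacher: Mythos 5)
Your route is essentially the paper's: both start from the last exact sequence of Lemma \ref{weight2}, both reduce the lemma to lifting a $\sigma$-invariant section of $Gr^W_2M_{\rm DH}(U)$ lying over the identity component (i.e.\ in $\Gm (1)^b$) to a $\sigma$-invariant section of $M_{\rm DH}(U)$, with the obstruction living in $H^1(\pp ^1,M_{\rm DH}(X)^o)\cong H^2(\pp ^1,H^1(X,\zz ))$ via the sequence of Lemma \ref{uniglobal}, both take $K$ to be the finite subgroup of $NS(U)^{\rm tors}$ of components actually hit, and both get the second sequence from the first by the coordinate change of Proposition \ref{prop-parabolicweight}. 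The difference is in how the obstruction is killed: the paper observes that the obstruction group is discrete and argues by connectedness of the space of sections, whereas you argue that the obstruction maps are homomorphisms out of a divisible group into a torsion-free (resp.\ exponent-$2$) group. Both work, and yours has the merit of explicitly treating the correction of the lift to a $\sigma$-invariant one via $H^1(\zz /2,\Gamma (\pp ^1,M_{\rm DH}(X)^o))$, a point the paper passes over. (Your ``finite $2$-group'' claim is neither needed nor justified; exponent $2$, hence no nonzero divisible subgroup, is what your argument actually uses.)

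One mismatch you should flag rather than silently absorb: what your computation produces as the kernel is $\Gamma (\pp ^1,\Gm (1)^b)^{\sigma}\cong \Gamma (\pp ^1,T(1,\log )^b)^{\sigma}/\zz (1,\log )^b$, i.e.\ $\bigl( (\rr \times \cc )/(1,0)\zz \bigr) ^b$; this is the left-hand term of the second displayed sequence (after passing to $(\varpi _p,{\rm res}_p)$ coordinates), but it is not the group $\Gamma (\pp ^1,T(1,\log )^b)^{\sigma}\cong \rr ^{3b}$ on the left of the first displayed sequence, so your sentence ``these are the left-hand terms of the first and of the second asserted sequence'' identifies two groups differing by $\zz ^b$. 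Since the middle term works modulo the gauge group (with $M_{\rm DH}(U)$, not $M_{\rm DH}(X,\log D)$), the gauge-quotiented kernel you compute is the natural one, and the two displays of the statement cannot literally both hold with the same middle and right terms; this appears to be an inaccuracy in the source rather than in your mathematics, but it must be pointed out, not elided. Two smaller slips: $K$ should be the image of the section group in $NS(U)^{\rm tors}$, not a ``cokernel''; and the generator $-\lambda \frac{\partial}{\partial \lambda}$ of $\zz (1,\log )$ does vanish at $0$ and $\infty$ (which is exactly why $\Gm (1)$ is stacky there) --- your identification $\Gamma (\pp ^1,\Gm (1)^b)^{\sigma}=\Gamma (\pp ^1,T(1,\log )^b)^{\sigma}/\zz ^b$ is correct, but because stack-sections over $\pp ^1$ correspond to trivial $\zz ^b$-torsors with equivariant maps and $H^1(\pp ^1,\zz )=0$, not because the generating section is ``locally nonzero at every point''.
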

\begin{proof}
The second exact sequence comes from the first via Proposition \ref{prop-parabolicweight}. 
The map 
\begin{equation}
\label{interchange}
Gr ^W_2 \Gamma (\pp ^1, M_{\rm DH}(U))^{\sigma}\rightarrow  \Gamma (\pp ^1, Gr ^W_2M_{\rm DH}(U))^{\sigma}
\end{equation}
is injective. 
Suppose we have a $\sigma$-invariant section of $Gr ^W_2M_{\rm DH}(U)$. The obstruction to lifting it to a section
in $Gr ^W_2 \Gamma (\pp ^1, M_{\rm DH}(U))^{\sigma}$ lies in $H^1(\pp ^1,M_{\rm DH}(X)^o)$. In view of the exact sequence used
in Lemma \ref{uniglobal}, we have $H^1(\pp ^1,M_{\rm DH}(X)^o)=H^2(\pp ^1,A)$ which is discrete. Therefore, on the connected component
of the space of sections, the map \eqref{interchange} is surjective. There is a finite subgroup $K\subset NS(U)^{\rm tors}$
representing the components in the image of \eqref{interchange}. In fact $A=H^1(X,\zz )$ and there is an exact sequence of the form
$$
0\rightarrow 
\Gamma (\pp ^1, T(1,\log )^b)^{\sigma}\rightarrow 
Gr ^W_2 \Gamma (\pp ^1, M_{\rm DH}(U))^{\sigma} \rightarrow NS(U)^{\rm tors} \rightarrow H^2(\pp ^1,H^1(X,\zz ))
$$
I don't know whether there are any examples where the last connecting map is nonzero. 
\end{proof}

\subsection{Proof of Theorem \ref{pref-id}}
\label{sub-proof}

Injectivity is proven in Corollary \ref{pref-id-inj}. 

Suppose we are given a $\sigma$-invariant section in the target of the map $\Pp$. 
Lift it over the quotient of the action of the gauge group $\Gg$, to get a section 
$$
\epsilon \in \Gamma (\pp ^1,M_{\rm DH}(X,\log D))^{\sigma}.
$$
We would like to construct a harmonic bundle mapping to $\epsilon$. For this, we will use the
correspondence \cite{hbnc} \cite{Mochizuki} \cite{Mochizuki2} 
between harmonic bundles and parabolic logarthmic $\lambda$-connections for some fixed $\lambda \in  \aaa ^1$.
It would be sufficient to use the Higgs case $\lambda = 0$ or the de Rham case $\lambda = 1$ but it is interesting to
treat a general $\lambda$. 

The value $\epsilon (\lambda )$  corresponds to a logarithmic $\lambda$-connection $(E,\nabla )$, with residue ${\rm res}_{\lambda ,D_i}(\epsilon )$ along
each $D_i$. On the other hand, consider the parabolic weight parameter 
$$
b_i:=\varpi _{\lambda ,D_i}(\epsilon )\in  \rr.
$$
Put a parabolic structure onto $(E,\nabla )$ using these weights. This gives a parabolic logarithmic $\lambda$-connection $(E(\sum b_iD_i),\nabla )$.

We claim that $c_1( E(\sum b_iD_i))=0$. To see this, look at the exact sequence of Proposition \ref{weight-exact}. Take spaces of 
$\sigma$-invariant sections, and use the identification of Proposition \ref{prop-parabolicweight} at our fixed $\lambda$. In these terms, $\epsilon$ maps
to an element of 
$$
\ker \left( NS(X,D)^{\rm sat} \times (\rr \times \cc ) ^k \rightarrow H^{1,1}(X,\rr )\otimes _{\rr} (\rr \times \cc ) \right) .
$$
The coefficient in $NS(X,D)^{\rm sat}$ is $\zeta = c_1(E)$, whereas the coefficient in $\rr ^k$ is $(b_1,\ldots , b_k)$.
The image in the first factor $H^{1,1}(X,\rr )\otimes _{\rr} \rr$ is
$$
\varpi _{\lambda} (\Lambda _{\rm DH}(\zeta ) + \Sigma _{\rm DH}(b_1,\ldots , b_k) ).
$$
Notice that $\varpi  _{\lambda}\circ \Lambda _{\rm DH}$ is equal to the usual map 
$NS(X,D)^{\rm sat}\rightarrow H^{1,1}(X,\rr )$. This is because of the normalization condition that $\varpi _{\lambda}(1,0)=1$ used in 
\S \ref{sub-parabolicweight}. Similarly, 
$$
\varpi _{\lambda} \Sigma _{\rm DH}(b_1,\ldots , b_k)  = b_1[D_1]+ \ldots + b_k[D_k]\;\; \in H^{1,1}(X,\rr ).
$$
We conclude that
$$
c_1( E(\sum b_iD_i)) = c_1(E) + b_1[D_1]+\ldots + b_k[D_k] = 0
$$
as claimed. 

Then the harmonic theory for parabolic logarithmic connections \cite{EellsSampson} \cite{Corlette} \cite{DonaldsonApp} \cite{hbnc} 
\cite{Mochizuki2} \cite{Budur} provides a rank $1$ tame harmonic bundle $\Ee$ over $U$,
whose associated parabolic $\lambda$-connection is $(E(\sum b_iD_i),\nabla )$. By Theorem \ref{harmoniccompose}, the parabolic weight of $\Ee$ is the same as
the parabolic weight of the harmonic bundle, that is
$$
\varpi _{1,D_i}(\Pp (\Ee )) = b_i .
$$
This coincides with the parabolic weight of $\epsilon$. 
Furthermore, by construction the values of $\Pp (\Ee )$ and $\epsilon$ at $p=1$
are the same, both equal to $(E,\nabla )$. We conclude that $\Pp (\Ee ) = \varepsilon$, which concludes the proof of Theorem \ref{pref-id},
by the following lemma.

\begin{lemma}
\label{section-unique}
Suppose $p\in \pp ^1$, and suppose $\xi , \epsilon$ are two $\sigma$-invariant sections of $M_{\rm DH}(X,\log D)$. 
Suppose that for each $D_i$, the parabolic weights agree $\varpi _{p,D_i}(\xi ) = \varpi _{p,D_i}(\epsilon )$. Suppose furthermore
that $\xi (p) = \epsilon (p)$. Then $\xi = \epsilon$. 
\end{lemma}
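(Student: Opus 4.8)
The plan is to exploit that both $M_{\rm DH}(X,\log D)$ and the space of sections carry group structures under tensor product, with $\sigma$ respecting the group law, and to reduce to the compact case via the weight filtration. Since $R$, the Chern class, the residue maps and the parabolic weight functions $\varpi_{p,D_i}$ are all homomorphisms (residues and parabolic weights add under tensor product, cf. \eqref{residueformula}), we may replace the pair $(\xi,\epsilon)$ by $(\xi\epsilon^{-1},e)$, where $e$ denotes the identity section. Thus it suffices to prove that a $\sigma$-invariant section $\delta:=\xi\epsilon^{-1}$ with $\delta(p)=e$ and $\varpi_{p,D_i}(\delta)=0$ for every $i$ is the identity section. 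By the symmetry between the two charts $M_{\rm Hod}(X,\log D)$ and $M_{\rm Hod}(\overline X,\log\overline D)$ we may also assume $p\in\aaa^1$, so that $\varpi_p$ and ${\rm res}_p$ are the functions of \S\ref{sub-parabolicweight}--\S\ref{sub-reseval}.

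First I would show the residue section vanishes. For each component $D_i$, Lemma \ref{residuesigma} gives a $\sigma$-invariant section ${\rm res}^{\rm DH}_{D_i}(\delta)\in\Gamma(\pp^1,T(1,\log))^{\sigma}$. Its value ${\rm res}_p\bigl({\rm res}^{\rm DH}_{D_i}(\delta)\bigr)$ is precisely the residue along $D_i$ of the logarithmic $p$-connection $\delta(p)$, which is zero because $\delta(p)=e$; and its parabolic weight is $\varpi_p\bigl({\rm res}^{\rm DH}_{D_i}(\delta)\bigr)=\varpi_{p,D_i}(\delta)=0$ by hypothesis. Since $(\varpi_p,{\rm res}_p)\colon\Gamma(\pp^1,T(1,\log))^{\sigma}\xrightarrow{\ \cong\ }\rr\times\cc$ is an isomorphism (Proposition \ref{prop-parabolicweight}), we conclude ${\rm res}^{\rm DH}_{D_i}(\delta)=0$ for all $i$, that is $R(\delta)=0$.

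Next I would locate $\delta$ inside the weight filtration. The class $c_1$ of the line bundle underlying $\delta(\lambda)$ is a locally constant function of $\lambda\in\pp^1$ valued in the discrete group $NS(X,D)^{\rm sat}$; since it vanishes at $\lambda=p$ it vanishes identically, so $(c_1,R)(\delta)=0$. By exactness of the sequence of Proposition \ref{weight-exact} at $M_{\rm DH}(X,\log D)$, this forces $\delta$ to be a section of $W_1M_{\rm DH}(X,\log D)=M_{\rm DH}(X)^{o}$: its image is connected and meets the identity, hence lies in the connected component. Finally, Lemma \ref{uniglobal} says the evaluation $\Gamma(\pp^1,M_{\rm DH}(X))^{\sigma}\to M_{\rm DH}(X)_p$ is an isomorphism, hence so is its restriction to the connected component of the identity; as $\delta(p)=e$ we conclude $\delta=e$, i.e. $\xi=\epsilon$.

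The only genuine content is the bookkeeping that $R$, $c_1$ and the $\varpi_{p,D_i}$ are homomorphisms compatible with $\sigma$, combined with the two inputs already available: Proposition \ref{prop-parabolicweight} (that $\varpi_p$ and ${\rm res}_p$ together separate $\sigma$-invariant sections of $T(1,\log)$) and Lemma \ref{uniglobal} (the compact evaluation isomorphism). The step I expect to need the most care is the passage from $(c_1,R)(\delta)=0$ to $\delta\in M_{\rm DH}(X)^{o}$, since it relies on exactness of \eqref{exseq} at $M_{\rm DH}(X,\log D)$ including over $\lambda=0,\infty$, where the condition $c_1=0$ is genuinely required to exclude the extra components.
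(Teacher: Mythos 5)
Your argument is correct and is essentially the paper's own proof: both rest on the weight-filtration exact sequence \eqref{exseq} (Proposition \ref{weight-exact}), the identification $(\varpi_p,{\rm res}_p)$ of $\sigma$-invariant sections of $T(1,\log)$ with $\rr\times\cc$ from Proposition \ref{prop-parabolicweight}, and the evaluation isomorphism of Lemma \ref{uniglobal}. Working with the difference section $\xi\otimes\epsilon^{-1}$ from the outset, and noting explicitly that discreteness of $NS(X,D)^{\rm sat}$ and exactness at $M_{\rm DH}(X,\log D)$ place it in $M_{\rm DH}(X)^{o}$, is only a cosmetic repackaging of the same steps.
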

\begin{proof}
The weight filtration exact sequence \eqref{exseq} gives an exact sequence on spaces of $\sigma$-invariant sections. 
Then, identify the space of sections of $T(1,\log )^k$ with $(\rr \times \cc )^k$ using $(\varpi _p, {\rm res}_p)$ as in Proposition \ref{prop-parabolicweight}. 
If $\xi (p)=\epsilon (p)$ then their residues at $p$ agree. By hypothesis the parabolic weight coordinates agree. Therefore, 
$\xi$ and $\epsilon$ go into the same section of $T(1,\log )^k$. 

They go to the same section of the discrete group $NS(X,D)^{\rm sat}$, because the values at $p$ are
the same by hypthesis. By Lemma \ref{weight2}, $\xi$ and $\epsilon$ go to the same section of $Gr^W_2 M_{\rm DH}(X,\log D)$. 
 
Therefore the difference $\xi \otimes \epsilon ^{-1}$ comes from a $\sigma$-invariant section of $M_{\rm DH}(X)$. 
As was noted in Lemma \ref{uniglobal},
the weight $1$ property of $M_{\rm DH}(X)$ says that the space of $\sigma$-invariant sections here maps isomorphically to any fiber. The condition 
$\xi (p) = \epsilon (p)$ thus implies that $\xi \otimes \epsilon ^{-1}$ is trivial. 
\end{proof}

\section{Strictness consequences}
\label{sec-strict}

One of the most useful things about weights in Hodge theory is that they lead to a notion of strictness. 
Here we formulate a conjecture which would be the corresponding strictness property coming from the weight two piece of 
the nonabelian $H^1$. Since it is just a conjecture, we consider representations of any rank. 

Suppose $(X,D)$ and $(Y, E)$ are smooth projective varieties with simple normal crossings divisors, such that $D$ has $k$ components and $E$ has $m$ components. 
Suppose $\Ff$ is some natural
construction from local systems on $U:=X-D$ to local systems on $V:=Y-E$. This could include any combination of pullbacks, higher direct images,
tensor products, duals, etc. For the present purposes, denote by $M_B(U)$ and $M_B(V)$ the full unions of spaces of representations of all ranks.
There will be a stratification of $M_B(U)$ into locally closed subsets such that $\Ff$ is algebraic on each stratum. Assume that this stratification is
maximal, that is $M_B(U)_{\alpha}$ is the full subset of representations $\rho$ on $U$ of a given rank, such that the image $\Ff (\rho )$ has a given rank on $V$. 

In the higher rank case, the eigenvalues of the local monodromy transformations may be considered all at once, with their multiplicities, as divisors on $\Gm ^{\perp}$. 
The group of such divisors is denoted $Div(\Gm ^{\perp})$, and for a divisor $D$ decomposing into $k$ irreducible components, the
full collection of residual data is a point in $Div(\Gm ^{\perp})^k$. 

\begin{conjecture}
\label{conj-strictness}
Let $M_B(U)_{\alpha}$ be a stratum on which $\Ff =\Ff _{\alpha}$ is defined as an algebraic map into $M_B(V)$. 
\newline
(1)\, There should be a diagram expressing the effect of the construction $\Ff _{\alpha}$ on residues: 
$$
\begin{array}{ccc}
M_B(U)_{\alpha} & \rightarrow & Div(\Gm ^{\perp})^k \\
\downarrow && \downarrow \\
M_B(V)_{\alpha} & \rightarrow & Div(\Gm ^{\perp})^m \, .
\end{array} 
$$
(2) \, The following strictness property holds: suppose $\rho _1, \rho _2 \in M_B(U)_{\alpha}$ are two semisimple representations such that
$\Ff _{\alpha} (\rho _1)$ and $\Ff _{\alpha}(\rho _2)$ have the same residues in $Div(\Gm ^{\perp})^m$. Then there exists a semisimple representation 
$\rho _3\in M_B(U)_{\alpha}$ such that $\rho _3$ has the same residues as $\rho _1$  in $Div(\Gm ^{\perp})^k$,
but $\Ff (\rho _3) \cong \Ff (\rho _2)$.
\end{conjecture}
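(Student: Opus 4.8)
The plan is to carry out for the higher-rank strata the program that \S \ref{sec-general} realizes in rank one: promote each stratum to a Deligne--Hitchin twistor space over $\pp ^1$, show that both the operation $\Ff _{\alpha}$ and the passage to residues lift to morphisms of twistor spaces compatible with the antipodal involution $\sigma$, and then extract part~(2) from the principle that a morphism of mixed twistor structures is strict for the weight filtration. First I would, on a stratum $M_B(U)_{\alpha}$, use the correspondence between filtered (parabolic) local systems and tame harmonic bundles together with the theory of polarizable pure twistor $\Dd$-modules of \cite{Sabbah} \cite{Mochizuki2}, to build a family $M_{\rm DH}(U)_{\alpha}\rightarrow \pp ^1$ with its involution $\sigma$, and a residue morphism $R^{\rm DH}_U : M_{\rm DH}(U)_{\alpha}\rightarrow \Rr (U)_{\alpha}$ onto a weight-two twistor space $\Rr (U)_{\alpha}$ assembled from copies of $T(1,\log )$ indexed by the eigenvalues, with multiplicities, of the local monodromy along the $D_i$; over $\lambda = 1$ this recovers $M_B(U)_{\alpha}\rightarrow Div(\Gm ^{\perp})^k$. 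The rank-one analysis of \S \ref{sub-parabolic} and \S \ref{sec-weight} --- especially Proposition \ref{prop-parabolicweight} and Lemma \ref{weight2} --- is the model, but in higher rank one should work near a semisimple point with the tangent mixed twistor structure (or with the eigenvalue divisors directly) to sidestep the non-regular monodromy issues flagged in \S \ref{sec-introduction}.

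For part~(1), the residue diagram is then assembled operation by operation. Pullback pulls back the eigenvalue divisor with multiplicities governed by ramification along $E$; tensor product convolves the divisors on $\Gm ^{\perp}$; duality inverts. The one substantial case is a proper higher direct image, where the effect on the local monodromy along a component of $E$ is computed from the nearby-cycle functor applied to the direct-image complex over a local model $\Delta ^{\ast}\hookrightarrow \Delta$; the decomposition theorems of Sabbah and Mochizuki for polarizable twistor $\Dd$-modules guarantee that this pushforward is again of the expected type, and keeping track of the grading by monodromy eigenvalue produces the horizontal map $\Rr (U)_{\alpha}\rightarrow \Rr (V)_{\alpha}$ of the square. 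Compatibility with $\sigma$ follows, as in \S \ref{sec-antipodal} and Lemma \ref{residuesigma}, from compatibility of the operations with conjugation of the base and with duality.

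For part~(2), introduce the weight filtration on $M_{\rm DH}(V)_{\alpha}$ as in \S \ref{sec-weight}: $W_1$ is the sublocus of objects extending across $E$ (equivalently, with vanishing residual data), $W_2$ is everything, and $Gr^W_2$ is measured by $R^{\rm DH}_V$; likewise on the $U$-side. The crucial input is that $\Ff _{\alpha}^{\rm DH}$ is \emph{strictly} compatible with these filtrations --- the twistor analogue of strictness of morphisms of mixed Hodge structures, the ``full mixed theory'' alluded to in \S \ref{sec-introduction}, which in the abelian case is exactly Lemma \ref{weight-compatible} together with Proposition \ref{weight-exact} and the diagram chases of \S \ref{sec-exact}. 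Granting it, the argument is a chase. Given semisimple $\rho _1,\rho _2\in M_B(U)_{\alpha}$ with $\Ff _{\alpha}(\rho _1)$ and $\Ff _{\alpha}(\rho _2)$ having the same residues $s\in Div(\Gm ^{\perp})^m$, the difference $\Ff _{\alpha}(\rho _1)\ominus \Ff _{\alpha}(\rho _2)$ --- interpreted through the group structure on the relevant linearization, or through $Gr^W_2$ and its $\sigma$-invariant sections as in Lemma \ref{sectionsquotient} --- lies in $W_1M_{\rm DH}(V)_{\alpha}$. By strictness it is the image of an element of $W_1M_{\rm DH}(U)_{\alpha}$, and recombining that element with $\rho _1$ produces $\rho _3$ with the same residual data as $\rho _1$ in $Div(\Gm ^{\perp})^k$ and with $\Ff _{\alpha}(\rho _3)\cong \Ff _{\alpha}(\rho _2)$; semisimplicity of $\rho _3$ is automatic since we never leave the world of harmonic bundles, which are semisimple.

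The main obstacle is the strictness input in part~(2): there is currently no construction of a mixed twistor structure on the nonabelian $H^1$ of an open variety enjoying both the functoriality under $\Ff$ and the strictness the chase requires. Rank one is special because $M_B$ is then a group and everything linearizes; in higher rank one must also cope with non-regular local monodromy, where the gauge action degenerates at $\lambda = 0$ as noted in \S \ref{sec-introduction}, and with the failure of $\Ff$ to respect the stratifications (a higher direct image can jump ranks and cross strata). A more accessible target, as the introduction suggests, is the infinitesimal version: replace $M_B(U)_{\alpha}$ by a tangent space, where the mixed Hodge structures of Pridham \cite{PridhamMHS} and Brylinski--Foth \cite{BrylinskiFoth} are available, and derive infinitesimal strictness of $\Ff$ from strictness of morphisms of those structures.
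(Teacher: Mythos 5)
The statement you are addressing is Conjecture \ref{conj-strictness}: the paper gives no proof of it at all. Section \ref{sec-strict} states it as an open problem, remarks only that the rank-one weight-two phenomenon ``should provide a proof'' in the rank-one case, and explicitly declines to carry even that out. So there is no paper argument to compare against, and your text should be judged as what it is: a program, not a proof. As a program it is sensible and follows the direction the paper itself suggests (promote the strata to twistor families, lift $\Ff_{\alpha}$ and the residue maps, and deduce (2) from strictness for a mixed twistor structure on the nonabelian $H^1$). But it does not establish the conjecture, and the gap is not peripheral: the ``crucial input'' you invoke --- strict compatibility of $\Ff_{\alpha}^{\rm DH}$ with the weight filtrations on a higher-rank mixed twistor structure over an open variety --- is precisely the ``full mixed theory'' whose existence the conjecture is meant to be evidence for. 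You acknowledge this yourself in your final paragraph, which means the argument is conditional on an unconstructed object; as written it is closer to a restatement of the conjecture in twistor language than to a proof of it.

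Beyond that central gap, several steps of the chase in part (2) do not survive the passage out of rank one. The difference $\Ff_{\alpha}(\rho_1)\ominus\Ff_{\alpha}(\rho_2)$ and the ``recombination'' of a $W_1$ element with $\rho_1$ use a group structure that exists only for rank one (where $M_B$ is an abelian group, as exploited in Lemmas \ref{uniglobal} and \ref{weight2}); in higher rank the moduli spaces carry no such structure, and a tangent-space or $Gr^W_2$ linearization gives at best an infinitesimal statement, which does not integrate to the global existence of $\rho_3$ without a separate argument. The claim that semisimplicity of $\rho_3$ is ``automatic since we never leave the world of harmonic bundles'' presupposes that the recombined object is again realized by a harmonic bundle, which is exactly what needs proof. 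Part (1) also hides a real difficulty: for higher direct images the construction $\Ff$ can cross the strata (rank jumps), and the paper flags non-regular local monodromy and the degeneration of the gauge action at $\lambda=0$ as obstacles to even defining the charts $M_{\rm Hod}$ in higher rank, so the existence of the family $M_{\rm DH}(U)_{\alpha}\rightarrow\pp^1$ with a residue morphism to a bundle built from $T(1,\log)$ is itself unproven. A defensible, smaller claim along your lines would be the rank-one case of the conjecture via \S \ref{sec-weight} and Lemma \ref{weight2}, or the infinitesimal version via the mixed Hodge structures of Pridham and Brylinski--Foth; as a proof of Conjecture \ref{conj-strictness} in the generality stated, your proposal has a genuine missing core.
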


To phrase it differently, this conjecture says that any variation of the image representation $\Ff (\rho )$, within a locus of representations on $V$ all
having the same residues, obtained by possibly varying the residues of $\rho$, can equally well be obtained while keeping the residues of $\rho$ fixed. 

It would be the analogue of the same statement in abelian Hodge theory for the diagram 
$$
\begin{array}{ccc}
H^1(U) & \rightarrow & Gr^W_2(H^1(U)) \\
\downarrow && \downarrow \\
H^1(V) & \rightarrow & Gr^W_2(H^1(V))
\end{array} .
$$
In the abelian case, pretty much the only possibility for the construction $\Ff$ is pullback for a map $V \rightarrow U$. 
The strictness statement says that if $a_1,a_2$ are classes in 
$H^1(U)$ whose pullbacks to $V$ have the same residues along $D$, then there is a class $a_3$ with the same residues as $a_2$,
whose pullback coincides with the pullback of $a_2$. 

Our observation of the weight two phenomenon in the case of rank one local systems should provide a proof of this conjecture for the rank one case. 
We don't discuss that here: it would go beyond the scope of the paper. 

One can also expect an infinitesimal formulation of the strictness property, which might be easier to prove. It would be the same statement, in the case where
$\rho _1$ and $\rho _2$ are infinitesimally close, and we would look for $\rho _3$ also infinitesimally close. This should be a consequence of having a 
mixed Hodge structure on the local deformation theory \cite{BrylinskiFoth} \cite{Foth} \cite{PridhamMHS} \cite{PridhamQl}, 
plus a compatibility of the construction $\Ff$ with this mixed Hodge structure. Again, this goes out of the
scope of the present discussion. 

One should also be able to formulate a similar conjecture for harmonic bundles with the parabolic residual data characterized by points in 
$Div(\frac{\rr}{\zz} \times \cc )^k$. 

We have been vague about what happens in the case of non-semisimple residues: is there a way to take into account the unipotent piece of the residue in the
strictness statement? It doesn't seem completely clear what is the right thing to say.

\bibliographystyle{amsalpha}

\end{document}